\tikzset{slopetriangle/.style={
  bottom color=black!20,
  middle color=black!5,
  top color=white,
  draw=black
}}
\pgfplotsset{
  width=.65\linewidth,
  axis background/.style={fill=black!5!white},
  grid style={densely dotted,semithick},
  legend style={
    legend columns=1,
    legend pos=outer north east
  },
  compat=newest % compatibility for old pgfplots versions
}
\definecolor{lightgrey}{rgb}{.7,.7,.7}
\newcommand{\calJ}{\mathcal{J}}
\newcommand{\calG}{\mathcal{G}}
\newcommand{\calN}{\mathcal{N}}
\newcommand{\calO}{\mathcal{O}}
\newcommand{\Jeps}{\mathcal{J}_\epsilon}
\newcommand{\Jepsn}{\mathcal{J}_{\epsilon_n}}
\newcommand{\Aeps}{A_\epsilon}
\newcommand{\Veps}{V_\epsilon}
\newcommand{\phieps}{\phi_\epsilon}
\newcommand{\kappaeps}{\kappa_\epsilon}
\renewcommand{\div}{\operatorname{div}}
\renewcommand{\d}{\, d}
\renewcommand{\leq}{\leqslant}
\renewcommand{\le}{\leqslant}
\renewcommand{\geq}{\geqslant}
\renewcommand{\ge}{\geqslant}
\newcommand{\eps}{\varepsilon}
\newcommand{\constrained}[1]{\epslower\vee #1\wedge \epsupper}
\newcommand{\epslower}{\eps_-}
\newcommand{\epsupper}{\eps_+}
\DeclareMathOperator*{\Gammalim}{\Gamma\text{-}\lim}
\numberwithin{equation}{section}
\begin{document}

\title{A Relaxed Ka\v{c}anov  Iteration for the $p$-Poisson
  Problem}

\author{L.~Diening}
\address{Lars Diening, Bielefeld University,
Faculty of Mathematics,
 Postfach 10 01 31,
 D-33501 Bielefeld, Germany}
\email{lars.diening@uni-bielefeld.de}
\thanks{This research was supported by the DFG project
    ``Optimal Adaptive Numerical Methods for p-Poisson Elliptic
    Equations''.
  We acknowledge support for the Article Processing Charge by the Deutsche Forschungsgemeinschaft and the Open Access Publication Fund of Bielefeld University.}

\author{M.~Fornasier}
\address{Massimo Fornasier, TUM,
 Department of Mathematics,
 Boltzmannstraße 3, 85748 Garching (Munich), Germany}
\email{massimo.fornasier@ma.tum.de}

\author{R.~Tomasi}
\address{Roland Tomasi, Osnabr\"uck University, Germany}

\author{M.~Wank}
\address{Maximilian Wank, Osnabr\"uck University, Germany}

\maketitle

\begin{abstract}
  In this paper we introduce and analyze an iteratively re-weighted
  algorithm, that allows to approximate the weak
  solution of the $p$-Poisson problem for $1 < p \leq 2$ by
  iteratively solving a sequence of linear elliptic problems. The
  algorithm can be interpreted as a relaxed Ka{\v c}anov iteration, as
  so-called in the specific literature of the numerical solution of
  quasi-linear equations. The main contribution of the paper is
  proving that the algorithm converges at least with an algebraic
  rate.

% \subclass{35J70,65L60}
% \keywords{p-Laplacian, Kacanov iteration, degenerate elliptic equation}
\end{abstract}

%\tableofcontents

\section{Introduction}

In this paper we approach the numerical solution of the $p$-Poisson
problem
\begin{align}
  \label{eq:pLap}
  \begin{aligned}
    -\operatorname{div}(|\nabla u|^{p-2}\nabla u)  &= f\quad\text{in }\Omega,\\
    u&= 0\quad\text{on }\partial\Omega,
  \end{aligned}
\end{align}
where $\Omega \subset \setR^d$ is open and bounded and $1<p<\infty$.
The solution might be scalar or vector-valued.\footnote{All our
  results also hold for the $p$-Poisson system, where the functions are
  vector-valued. To this end sometimes~$\setR^d$ has to be replaced
  by~$\setR^{N \times d}$.}

Nonlinear problems of this type appear in many applications,
e.g. non-Newtonian fluid theory~\cite{Lad67}, turbulent flow of a gas
in porous media, glaciology or plastic modeling. Moreover, the
$p$-Laplacian has a similar model character for nonlinear problems as
the ordinary Laplace operator for linear problems; see~\cite{Lin06}
for an introduction.

As usual we are looking for the weak solution of~\eqref{eq:pLap}. In
particular, we are searching for a function $u\in W_0^{1,p}(\Omega)$
such that
\begin{align}\label{eq:weakppoisson}
  \int\limits_\Omega |\nabla u|^{p-2} \nabla u \cdot\nabla \xi \d x =
  \langle f,\xi\rangle \quad\forall \xi\in W_0^{1,p}(\Omega),
\end{align}
where in the most general case $f\in (W_0^{1,p}(\Omega))^\ast$.  It is
well-known that the solution is unique and coincides with the
minimizer of the energy $\calJ: W_0^{1,p}(\Omega)\to\setR$ defined by
 \begin{align}
   \label{eq:J}
   \calJ(v) := \tfrac 1p\int\limits_\Omega |\nabla v|^p \d x -\langle
  f,v\rangle .
\end{align}
Due to the nonlinearity of the problem it is harder to obtain
efficient numerical solutions of this problem with a guaranteed
performance.  Our goal is to construct solutions
of~\eqref{eq:weakppoisson} by means of a numerically accessible
algorithm. In particular, we construct an iterative algorithm that
approximates solutions of~\eqref{eq:weakppoisson}, where in each step
only a linear elliptic problem has to be solved. Primarily, we focus
here on the iteration on the infinite dimensional
space~$W^{1,p}_0(\Omega)$. However, the same algorithm will
immediately apply also to discretized versions of the $p$-Poisson
problem, e.g., by means of finite elements or wavelets.  This approach
would coincide with the one adopted, for instance, in~\cite{CU05} of
first finding an iteration on the infinite-dimensional solution space
and then discretizing in space. We will consider in subsequent work
the effect of the discretization and its adaptation to error
estimators.

In this paper we also restrict ourselves to the case $p\in(1,2]$, since we
are in particular interested in relatively small values of~$p$, also because the case of $p>2$ is already addressed to a certain extent in~\cite{CU05}.  We will see, e.g., in
Example~\ref{ex:peak} that our algorithm actually only works properly for the
range of~$p \in (1,2]$.

%  We will not use this way directly. Instead, we will define an auxiliary function $\kappaeps$ depending on some $\eps\in \setR_{>0}^2$ where $\kappaeps(t)\to \tfrac 1pt^p$ for vanishing $\eps$ impact and find an iteration converging to the minimizer of
% \begin{align*}
%  \int\limits_\Omega \kappaeps(|\nabla v|) \d x - \langle f,v\rangle
% \end{align*}
% with a certain rate. Before doing so, we justify this algorithm by showing that the minimizers of the last energy tend to the solution of  \eqref{eq:weakppoisson} as $\eps\to (0,\infty)$. Therefore, we proof some error inequalities for the error coming into the problem by introducing the $\eps$ constraint. % ATTENTION: There is a reference to this part of the introduction in the next section!

Coming from the weak formulation \eqref{eq:weakppoisson} one can
interpret the problem as a weighted Poisson problem
\begin{align}
  \label{eq:div_a_nabla_u}
  \int\limits_\Omega a^{p-2} \nabla u\cdot\nabla \xi\d x =\langle
  f,\xi\rangle\quad\forall\xi\in W_0^{1,2}(\Omega)
\end{align}
for the given $f$, where $a:\Omega\to\setR$ and $a=|\nabla u|$. This
suggests to iteratively calculate for a given function $v_n$ the new
iterate $v_{n+1}$ as the solution of
\begin{align*}
  \int\limits_\Omega |\nabla v_n|^{p-2} \nabla v_{n+1}\cdot \nabla \xi
  \d x = \langle f,\xi\rangle\quad\forall\xi\in W_0^{1,2}(\Omega).
\end{align*}
The advantage of this step is that the calculation of~$v_{n+1}$ only
requires solving a linear problem. This allows invoking relatively standard appraoches
to discretize this step and solve it numerically with guaranteed performances. The problem with this approach, however, is
that the weighted Poisson problem is only well posed if $a$ is bounded
from above and from below away from zero. However, the weight $|\nabla
v_n|^{p-2}$  may be degenerating, at points where $\abs{\nabla u}=0$ or
$\abs{\nabla u}=\infty$. 

To overcome this problem we will use
a relaxation arguments. Therefore, we introduce in our algorithm two relaxation parameters
$\epsilon_-, \epsilon_+ \in (0,\infty)$ with $\epsilon_- \le
\epsilon_+$ that ensure that the weight is truncated properly from
below and above. In particular, we replace~$a$ by its truncation
\begin{align*}
  \epsilon_- \vee a \wedge \epsilon_+ := \max \set{ \epsilon_-, \min
    \set{ a, \epsilon_+}}.
\end{align*}
Note that this is just the (pointwise) closest point projection of~$a$
to the truncation interval~$ [\epsilon_-,\epsilon_+]$.  The
limits $\epsilon_- \searrow 0$ and $\epsilon_+ \nearrow \infty$ will
recover the unrelaxed or original problem. We also write
$\epsilon:=[\epsilon_-, \epsilon_+]$ and interpret~$\epsilon$ both as
a pair  $\{\epsilon_-,\epsilon_+\}$ and as the truncation
interval~$[\epsilon_-,\epsilon_+]$. We will
write~$\epsilon \to [0,\infty]$ as a short version
of~$\epsilon_- \searrow 0$ and~$\epsilon_+ \nearrow \infty$.  We will
see later, see Corollary~\ref{cor:regCM}, that for~$f$ in the Lorentz
space~$L^{d,1}(\Omega)$ the
lower parameter~$\epsilon_-$ is the crucial one.

According to these considerations we propose the following algorithm:

\begin{algorithm}[H]
\SetAlgoLined
\TitleOfAlgo{The relaxed $p$-Ka{\v c}anov{} algorithm}
\KwData{Given~$f \in (W^{1,p}_0(\Omega))^*$, $v_0 \in
  W^{1,2}_0(\Omega)$;}
\KwResult{Approximate solution of the $p$-Poisson
  problem~\eqref{eq:weakppoisson}; }
Initialize: $\eps_0=[\eps_{0,-},\eps_{0,+}] \subset (0,\infty)$,
$n=0$\;

\While{desired accuracy is not achieved yet}{
  Define $a_n := \eps_{n,-} \vee |\nabla v_n| \wedge
  \eps_{n,+}$\;

  Calculate $v_{n+1}$ by means of
  % \begin{align*}
  %   v_{n+1}&:= \argmin\limits_{v\in W_0^{1,2}(\Omega)} \mathcal{J}(v,
  %   a_n);
  % \end{align*}

  % In particular, $v_{n+1}$ solves
  \begin{align*}
     \hspace*{-8mm}\int\limits_\Omega (\eps_{n,-} \vee |\nabla v_n| \wedge
     \eps_{n,+})^{p-2}\nabla v_{n+1}\cdot \nabla \xi \d x = \langle
     f,\xi\rangle \qquad \forall \xi \in W_0^{1,2}(\Omega);\;
  \end{align*}

  Choose new relaxation interval $\eps_{n+1} \supset \epsilon_n$;

  Increase $n$ by 1\;

  % Calculate $a_{n+1}$ by means of%
  % \begin{align*}
  %   a_{n+1}&:= \argmin\limits_{a\,:\,\epsilon_- \leq a \leq
  %     \epsilon_+} \mathcal{J}(v_n, a).
  % \end{align*}

  % Calculate $v_{n+1}$ by means of
  % \begin{align*}
  %   v_{n+1}&:= \argmin\limits_{v\in W_0^{1,2}(\Omega)} \mathcal{J}(v,
  %   a_{n+1}).
  % \end{align*}
}{
}
\end{algorithm}

Since $0 < \epsilon_{n,-} \leq \epsilon_{n,+}<\infty$ the equation
for~$v_{n+1}$ in the algorithm is always well defined, since it is
uniformly elliptic (with constant depending on~$\epsilon_n$).

This algorithm is not completely new in the realm of quasi-linear
equations. Such an iterative linearization approach is in fact called
the Ka{\v c}anov method in \cite{MR2754575,MR1454708} and we refer to
those papers for additional references related to the history of this
method for solving numerically quasi-linear equations.  It was also
proposed and analyzed to solve total variation minimization problems
in image processing, which can be formally related to the 1-Laplace
differential operator in \cite{MR1667392,MR1440119}.

Unfortunately, the results obtained in these aforementioned papers
cannot be applied straightforwardly to justify the convergence of the
Ka{\v c}anov iteration for equations involving the $p$-Laplace
operator.  In particular, to obtain quantitative estimates of
convergence with precise rates, as we do in this paper, one needs to
employ several finer tools, which have been explored in, e.g.,
\cite{DE08,DieK08,BelDieKre11,RD06}, precisely to handle singularities in
nonlinear differential operators such as the $p$-Laplacian. In
particular, the theory of N-functions, Orlicz spaces \cite{KR61},
shifted N-functions~\cite{DE08} and Lipschitz truncations, see
\cite{DKS13} and \cite{BDS16} have been used
systematically in the analysis of such nonlinear operators, allowing
the development of a potential theory analogous to the one known of
linear equations.

Besides these tools from nonlinear potential theory, the variational
formulation of the algorithm, as introduced first in \cite{MR1440119},
and further used to analyze other related iteratively re-weighted
least squares algorithms \cite{MR2588385,MR2869510}, offers the right
framework for the analysis also of the $p$-Ka{\v c}anov iteration.

Taking inspiration from \cite{MR1440119,MR2588385}, in
Section~\ref{sec:algo} we provide the variational derivation of this
algorithm based on the alternating minimization of a relaxed energy
with two parameters.

If we apply the algorithm with fixed relaxation parameter~$\epsilon$
independent on~$n$, i.e. $0<\epsilon_- \leq \epsilon_+ < \infty$, then
our iterates~$v_n$ converge to the unique minimizer~$u_\epsilon$ of
another  one-parameter relaxed energy~$\mathcal{J}_\epsilon$. We study this limit in
Section~\ref{sec:nlimit} and present (linear) exponential rates of convergence.

In Section~\ref{sec:epslimit} we study how the minimizers~$u_\epsilon$
of the relaxed energy~$\Jeps$ converge to the minimizer~$u$ of the
original problem.  This convergence can also be interpreted as a limit
in the sense of $\Gamma$-convergence
\cite{MR1968440,MR1201152}. Differently, e.g., from \cite{MR1440119},
we use a novel argument based on the Lipschitz truncation technique to
establish a recovery sequence for the $\Gamma-\lim\sup$. In
particular, thanks to the finer tools mentioned above, we can go beyond
a pure compactness argument as provided by the $\Gamma$-limit and
derive precise rates of convergence depending on~$\epsilon$.

Finally, in Section~\ref{sec:algebraic} we combine the estimates of
the two previous sections to deduce an overall error analysis with
algebraic rates.

\section{Variational Formulation of the Algorithm}
\label{sec:algo}

In this section we show that the algorithm can be deduced from an alternating minimization of a
relaxed energy. Recall that $1 < p \leq 2$ throughout this
article. Since the case $p=2$ is just the standard Laplace problem, it
suffices in the following to consider the case~$1<p<2$ only.

Let us introduce some standard notation. We use~$W^{1,p}(\Omega)$ and
$W^{1,p}_0(\Omega)$ for the Sobolev space without and with zero
boundary values. We use~$c$ for a generic positive constant whose value may
change from line to line. We  use~$f \lesssim g$ for $f \leq c\,
g$. We also write $f \eqsim g$ for $f \lesssim g$ and $g \lesssim f$.

The most important feature of the algorithm is that it only needs to
solve linear sub-problems, which carry their own energy depending on the weight. Therefore, very much inspired by the work \cite{MR1440119,MR2588385} and with appropriate adjustments, we extend the energy  by an additional parameter $a \,:\, \Omega \to [0,\infty)$ such that
the new functional is quadratic with respect to~$v$. In particular, we
define
\begin{align*}
  \mathcal{J}(v,a) := \int\limits_\Omega \tfrac 12 a^{p-2}|\nabla
  v|^2 + (\tfrac 1p - \tfrac 12)a^p\d x - \langle f,v\rangle .
\end{align*}
This energy is well-defined for all~$v \in W^{1,p}_0(\Omega)$ and
measurable $a\,:\, \Omega \to [0,\infty)$ but might take the
value~$\infty$.

This relaxed energy is convex with respect to~$(v,a)$. This follows
from the fact that $\beta(t,a) := \frac 12 a^{p-2}t^2$ is convex on
$[0,\infty)^2$, since
\begin{align*}
  (\nabla^2 \beta)(t,a) = \begin{pmatrix} a^{p-2} & (p-2)a^{p-3}t
    \\
    (p-2)a^{p-3}t & \frac 12 (p-2)(p-3)a^{p-4}t^2
  \end{pmatrix}
\end{align*}
is nonnegative definite as $a^{p-2}\geq 0$ and
$\det ((\nabla^2 \beta)(t,a))=a^{2p-6} t^2 (2-p)(p-1)\geq 0$. Notice
that in the latter lower bound we specifically used $1 < p \leq 2$.

\begin{remark}
  \label{rem:psmaller2}
  If $p>2$, then the relaxed energy~$\mathcal{J}(v,a)$ is neither
  bounded from below nor convex with respect to~$a$. Therefore, the
  algorithm derived below using the minimization with respect to~$a$ does not lead to a feasible problem for~$p>2$. See also Remark~\ref{rem:peak3}.
\end{remark}

Note that $\mathcal{J}(v,a)$ (for fixed~$a$) is quadratic with respect to~$v$
and a minimization with respect to~$v$ leads formally to the elliptic equation
\begin{align*}
  -\divergence(a^{p-2} \nabla v) &= f,
\end{align*}
see~\eqref{eq:div_a_nabla_u} for its weak form.

Unfortunately, the ellipticity of this system degenerates for
$a(x)\to 0$ and $a(x) \to\infty$. To overcome this problem we restrict
the minimization with respect to~$a$ (for fixed~$v$) to functions with
values within a relaxation
interval~$[\epsilon_-, \epsilon_+] \subset (0,\infty)$,
i.e. $\epsilon_- \leq a(x) \leq \epsilon_+$.  This minimization with
respect to~$a$ (for fixed~$v$) has a simple solution, namely
\begin{align}
  \label{eq:argmina_res}
  \argmin_{a\,:\,\epsilon_- \leq a \leq \epsilon_+} \mathcal{J}(v,a) =
  \epsilon_- \vee \abs{\nabla v} \wedge \epsilon_+,
\end{align}
where $\vee$ denotes the maximum and $\wedge$ the minimum, since
\begin{align*}
  \tfrac{\partial}{\partial a} \Big(  \tfrac 12 a^{p-2}|\nabla
  v|^2 + (\tfrac 1p - \tfrac 12)a^p \Big) = \tfrac{2-p}{2}
  a^{p-3} (a^2 - \abs{\nabla v}^2).
\end{align*}
This allows us to define for fixed~$\epsilon= [\eps_-,\eps_+] \subset
[0,\infty]$ another relaxed energy
\begin{align}
  \label{eq:def_Jeps}
  \Jeps(v) := \calJ\left(v,\constrained{\vert\nabla v\vert}\right) =
  \min_{a\,:\,\epsilon_- \leq a \leq \epsilon_+} \mathcal{J}(v,a).
\end{align}
This immediately implies that the relaxed
energy~$\mathcal{J}_\epsilon(v)$ is monotonically decreasing with
respect to~$\epsilon$, i.e., an increasing interval~$\epsilon$ in terms of inclusion
decreases the energy~$\mathcal{J}_\epsilon(v)$.

This new relaxed energy~$\mathcal{J}_\epsilon$ somehow ``hides'' the
constrained minimization with respect to~$a$.  We can
write~$\mathcal{J}_\epsilon\,:\, W^{1,p}_0(\Omega) \to \setR \cup
\set{\infty}$ explicitly as
\begin{align*}
  \Jeps(v) = \int\limits_\Omega \kappaeps (\vert\nabla v\vert)\d x -
  \langle f,v\rangle
\end{align*}
with $\kappaeps\,:\,\setR_{\ge 0}\to\setR$ given by
\begin{align*}
  \kappaeps(t) &:=
  \begin{cases}
    \frac{1}{2}\eps_-^{p-2}t^2 + (\frac{1}{p}-\frac{1}{2})\eps_-^p & \text{for }t\leq
    \epslower 
    \\
    \frac{1}{p}t^p & \text{for } \epslower\leq t\leq \epsupper 
    \\ 
    \frac{1}{2}\eps_+^{p-2}t^2 + (\frac{1}{p}-\frac{1}{2})\eps_+^{p} & \text{for }
    t\geq \epsupper.
  \end{cases}
\end{align*}
Note that $\frac 1p t^p \leq \kappa_\epsilon(t)$ for all~$t \geq 0$
and $\frac 1p t^p = \lim_{\epsilon \to [0,\infty]} \kappa_\epsilon(t)$ for
all~$t \geq 0$.  Since
$\kappa_\epsilon(t) \eqsim \epsilon_+^{p-2} t^2$ for large~$t$, we see
that $\mathcal{J}_\epsilon(v) < \infty$ if and only
if~$v \in W^{1,2}_0(\Omega)$.  Moreover,
$\lim_{\epsilon \to [0,\infty]} \mathcal{J}_\epsilon(v) =
\mathcal{J}(v)$
for all~$v \in W^{1,2}_0(\Omega)$ and
$\mathcal{J}(v) \leq \liminf_{\epsilon \to [0,\infty]}
\mathcal{J}_\epsilon(v)$ for all $v \in W^{1,p}_0(\Omega)$.

Based on the above observations it is natural to iteratively
minimize~$\mathcal{J}(v,a)$ alternating between~$v$ and
$a$. Certainly, we have also to increase the relaxation
interval~$\epsilon$. Thus our algorithm reads as follows:

\begin{algorithm}[H]
\SetAlgoLined
\TitleOfAlgo{The relaxed $p$-Ka{\v c}anov{} algorithm (variational formulation)}
\KwData{Given~$f \in (W^{1,p}_0(\Omega))^*$, $v_0 \in
  W^{1,2}_0(\Omega)$;}
\KwResult{Approximate solution of the $p$-Poisson
  problem~\eqref{eq:weakppoisson}; }

Initialize: $\eps_0=[\eps_{0,-},\eps_{0,+}] \subset (0,\infty)$,
$n=0$\;

\While{desired accuracy is not achieved yet}{
  Calculate $a_n$ by means of%
  \begin{align*}
    a_n&:= \argmin\limits_{a\,:\,\epsilon_- \leq a \leq
      \epsilon_+} \mathcal{J}(v_n, a);\;
  \end{align*}

  Calculate $v_{n+1}$ by means of
  \begin{align*}
    v_{n+1}&:= \argmin\limits_{v\in W_0^{1,2}(\Omega)} \mathcal{J}(v,
    a_n);\;
  \end{align*}
  
  Choose a new relaxation interval $\eps_{n+1} \supset \epsilon_n$\;

  Increase~$n$ by~$1$\;
}{
}
\end{algorithm}
This is just the algorithm given in the introduction written in
different form. 

\section{Convergence in the Relaxation Parameter}
\label{sec:epslimit}

In this section we show that the minimizers~$u_\epsilon$ of the relaxed
energy~$\mathcal{J}_\epsilon$ converge to the minimizer~$u$
of~$\mathcal{J}$ for $\epsilon \to [0,\infty]$ and derive an upper bound for the relaxation error.

Since $\mathcal{J}_\epsilon(v) \geq \mathcal{J}(v)$ and $\mathcal{J}$
is $W^{1,p}_0(\Omega)$ coercive, it follows
that~$\mathcal{J}_\epsilon$ is also coercive in
$W^{1,p}_0(\Omega)$. However,
$\mathcal{J}_\epsilon(v)<\infty$ requires~$v \in W^{1,2}_0(\Omega)$,
as we have seen above. Certainly, there is a gap between the
space~$W^{1,p}_0(\Omega)$ and~$W^{1,2}_0(\Omega)$. To close this gap
we need a finer analysis of the energies, which requires the use of
Orlicz spaces. We state in the following some standard results for
these spaces, see for example~\cite{KR61}.

% \begin{align*}
%  \mathcal{J}(v,a) := \int\limits_\Omega \tfrac 12 (\constrained{|a|})^{p-2}|\nabla v|^2 + (\tfrac 1p - \tfrac 12)(\constrained{|a|})^p\d x - \langle f,v\rangle .
% \end{align*}
% Its advantage is that it gives the possibility to handle the change from $v_n$ to $v_{n+1}$ as a weight, since
% \begin{align}
%  |\nabla v_{n+1}| \in \argmin\limits_{v}\, \Jeps(v_{n+1},v) \label{eq:minimizingweight}
% \end{align}
% under the assumption that $p\in (1,2)$. Hence, the in \eqref{eq:iteration} defined iteration can be interpreted as a sequence of minimization problems: Given the iteration in \eqref{eq:iteration} for $v_n$, $v_{n+1} = \argmin_{v\in W_0^{1,p}(\Omega)} \Jeps(v,|\nabla v_n|)$. Due to~\eqref{eq:minimizingweight} we get
% \begin{align*}
%   \Jeps(v_n, |\nabla v_n|)\ge \Jeps(v_{n+1}, |\nabla v_n|)\ge \Jeps(v_{n+1}, |\nabla v_{n+1}|) .
% \end{align*}
% In particular, for
% \begin{align*}
%   \Jeps(v):= \Jeps(v,|\nabla v|)
% \end{align*}
% we get that $\Jeps(v_n)$ is a decreasing sequence. Note that for

A function $\phi:\setR_{\ge 0}\to\setR$ is called an N-function if and
only if there is a right-continuous, positive on the positive real line, and
non-decreasing function $\phi':\setR_{\ge 0}\to\setR$ with
$\phi'(0)=0$ and $\lim_{t \to \infty} \phi'(t)=\infty$ such that
$\phi(t) = \int_0^{t}\phi'(\tau)\d \tau$. An N-function is said to
satisfy the $\Delta_2$-condition if and only if there is a constant
$c>1$ such that $\phi(2t)\le c\,\phi(t)$. For an N-function satisfying
the $\Delta_2$-condition we define the Orlicz space to consist of
those functions~$v \in L_{\loc}^1(\Omega)$ with
$\int_\Omega \phi(|v|)\d x<\infty$. It becomes a Banach space with the
norm $\norm{f}_\phi := \inf \set{\gamma>0\,:\, \int_\Omega
  \phi(\abs{v}/\gamma)\,dx\leq 1}$.  The Orlicz-Sobolev space
$W^{1,\phi}(\Omega)$ then consists of those $v \in L^\phi$ such that
the weak derivative~$\nabla v$ is also in~$L^\phi$, equipped with the
norm $\norm{v}_\phi + \norm{\nabla v}_\phi$. The
space~$W^{1,\phi}_0(\Omega)$ denotes the subspace of those functions
from~$W^{1,\phi}(\Omega)$ with zero boundary trace, which coincides
with the closure of~$C^\infty_0(\Omega)$ in $W^{1,\phi}(\Omega)$.  For
example choosing $\phi(t):=\tfrac 1p t^p$ we have $L^\phi(\Omega) =
L^p(\Omega)$ and $W^{1,\phi}_0(\Omega) = W^{1,p}_0(\Omega)$.

The function~$\kappa_\epsilon$ cannot be
an N-function, since $\kappaeps(0)\neq 0$, . However, if we define
\begin{align}
  \label{eq:defphieps}
  \phieps(t) := \kappaeps(t) - \kappaeps(0),
\end{align}
then $\phieps$ is actually an N-function.  It can be verified
that~$\phi_\epsilon$ satisfies the~$\Delta_2$-condition with a
constant independent of~$\epsilon$.

Since $\phi_\epsilon(t) \eqsim \epsilon_+^{p-2} t^2$ for large~$t$
and~$\Omega$ is bounded, we have $L^{\phi_\epsilon}(\Omega) \eqsim
L^2(\Omega)$. However, the constant of the embedding $L^{\phi_\epsilon}(\Omega) \embedding L^2(\Omega)$
depends on~$\epsilon$, so this equivalence is not of much use.
Instead we use the chain of embeddings
\begin{align}
  \label{eq:L2toLphitoLp}
  L^2(\Omega) \embedding L^{\phi_\epsilon}(\Omega) \embedding L^p(\Omega),
\end{align}
with constants independent of~$\epsilon$. This follows from the fact
that the Simonenko indices of~$\phi_\epsilon$ are within~$[p,2]$. We refer the reader to, e.g., \cite[Chapter 2]{maxi16} for the details.

Since $\phieps$ is strictly convex and $\kappaeps(t) = \phieps(t) +
\kappaeps(0)$, the energy~$\Jeps$ admits a unique minimizer
$u_\epsilon \in W_0^{1,\phieps}(\Omega)$ whose Euler-Lagrange equation
is
\begin{align}
  \label{eq:equationOfueps}
  \int_\Omega (\eps_- \vee |\nabla u_\epsilon| \wedge
  \eps_+)^{p-2}\nabla u_\epsilon\cdot \nabla \xi \d x = \langle
  f,\xi\rangle \qquad \forall \xi \in W_0^{1,\phi_\epsilon}(\Omega).
\end{align}
At this we used that
\begin{align}
  \label{eq:phiepsprime}
  \frac{\phi_\epsilon'(t)}{t} &=  (\eps_- \vee t \wedge \epsilon_+)^{p-2}.
\end{align}
\begin{remark}
  \label{rem:no-upper-bound}
  Let us consider the special case~$\epsupper=\infty$. Then, the
  derivative of the truncated function reads
  $\phi_\epsilon'(t) = (\epslower \vee t)^{p-2} t$. A different
  modification, namely $(\epslower + t)^{p-2} t$, would lead to the
  so-called shifted N-function of $\frac{1}{p}t^p$, as introduced in
  more generality in~\cite{DE08}, which has similar properties as our
  truncation functions.  However, the version from this paper is more
  suitable for our energy relaxation, since it is closer to the
  original function~$\frac 1p t^p$ on the truncation
  interval~$\epsilon$ (the derivatives agree there). See the Appendix
  for more information on uniformly convex Orlicz functions and their
  shifted verions.
\end{remark}
\begin{lemma}
  \label{lem:phi-eps-uniformly-convex}
  The functions~$\phi$ and~$\phi_\epsilon$ are uniformly convex Orlicz
  functions in the sense of Section~\ref{sec:orlicz-functions} from
  the Appendix. The convexity constants are independent of~$\epsilon$.
\end{lemma}
\begin{proof}
  The uniform convexity of~$\phi$ follows
  from~$\frac{\phi''(t)\,t}{\phi'(t)}= (p-1)$ and
  Lemma~\eqref{eq:old-mon}. Now, for $t \in (\epsilon_-,\epsilon_+)$ we
  have $\frac{\phi_\epsilon''(t)\,t}{\phi\epsilon'(t)}= (p-1)$, while
  for $t \in (0,\infty) \setminus [\epsilon_-,\epsilon_+]$ we have
  $\frac{\phi_\epsilon''(t)\,t}{\phi_\epsilon'(t)} = 1$. Hence, the
  claim for~$\phi_\epsilon$ follows again by
  Lemma~\eqref{eq:old-mon}. \qed
\end{proof}

Since $W^{1,p}_0(\Omega)$ is the largest space,
see~\eqref{eq:L2toLphitoLp}, which contains both~$u_\epsilon$ and~$u$,
it is natural to consider all energies~$\mathcal{J}$ and
$\mathcal{J}_\epsilon$ as functionals on~$W^{1,p}_0(\Omega)$ with
possible value~$\infty$.

Let us recall that the goal of this section is to show
that~$u_\epsilon$ converges to~$u$ in $W^{1,p}_0(\Omega)$.  Since
$W^{1,p}_0(\Omega)$ is uniformly convex, strong convergence is a
consequence of weak convergence and norm convergence, or equivalently,
in this case, energy convergence
$\mathcal{J}(u_\epsilon) \to \mathcal{J}(u)$. It is possible to show
the weak convergence as well as that of the energy by means
of~$\Gamma$-convergence. Indeed, we will see in
Remark~\ref{rem:gammlimit} that $\mathcal{J}_\epsilon \to \mathcal{J}$
in the sense of~$\Gamma$-convergence. However, we will derive in the
following much stronger results that provide us with a precise rate of
convergence for the energies. This energy convergence implies strong
convergence of the sequence, see the proof of
Corollary~\ref{cor:convconstrained2}.

Let us turn to the convergence of the energies
$\mathcal{J}(u_\epsilon) \to \mathcal{J}(u)$ for $\epsilon \to
[0,\infty]$.  Since $\mathcal{J}_\epsilon$ is monotonically decreasing
with respect to~$\epsilon$, it follows from the minimizing properties
of~$u$ and~$u_\epsilon$ that
\begin{align}
  \label{eq:2}
  0 &\leq \mathcal{J}(u_\epsilon) - \mathcal{J}(u) \leq
  \mathcal{J}_\epsilon(u_\epsilon) - \mathcal{J}(u).
\end{align}
Therefore, it suffices to prove the stronger claim
\begin{align}
  \label{eq:1}
  \mathcal{J}_\epsilon(u_\epsilon) - \mathcal{J}(u) \to 0 \qquad
  \text{as~$\epsilon \to [0,\infty]$}.
\end{align}
In fact, we will later need this stronger estimate in the other
sections.

It follows from the minimizing property of~$u_\epsilon$ that
\begin{align*}
  \mathcal{J}_\epsilon(u_\epsilon) - \mathcal{J}(u) \leq
  \mathcal{J}_\epsilon(u) - \mathcal{J}(u).
\end{align*}
So it would be natural to estimate
$\mathcal{J}_\epsilon(u) - \mathcal{J}(u)$ in terms of~$\epsilon$
and~$u$. However, the solution~$u$ is unfortunately a~priori only
a~$W^{1,p}_0$-function, so~$\mathcal{J}_\epsilon(u)$ might be
infinity. Hence, we cannot assure that this difference is small. This
is only possible if we assume higher regularity of~$u$. In order to
treat arbitrary right-hand sides~$f \in (W^{1,p}_0(\Omega))^*$ at this
point, we have to use a much more subtle argument.  For this we need a
result from~\cite[Subsection~3.5]{DKS13} and
\cite[Theorem~2.7]{BDS16}, which allows to change~$u$ on a small set
such that it becomes a Lipschitz function. This technique is known as
the \emph{Lipschitz truncation technique}. Its origin goes back
to~\cite{AceF88}. As a tool we need the Hardy-Littlewood operator,
e.g. \cite{SM93},
 \begin{align*}
   (Mg)(x) &:= \sup_{r>0} \dashint_{B_r(x)} \abs{g}\,dx := \sup_{r>0} \frac{1}{|B_r(x)|}\int_{B_r(x)}|g|\d x 
 \end{align*}
where $\abs{B_r(x)}$ denote the Lebesgue measure of $B_r(x)$.
\begin{theorem}[Lipschitz trunction \cite{DKS13,BDS16}]\label{thm:liptrunc}
  Let $v\in W_0^{1,p}(\Omega)$ and for all $\lambda>0$ define
 \begin{align*}
   \mathcal{O}_\lambda := \mathcal{O}_\lambda(v) := \{x\in\Omega \,:\,
   M(\nabla v)(x)>\lambda\},
 \end{align*}
 Then, there
 exists an approximation $T_\lambda v\in W_0^{1,\infty}(\Omega)$ of
 $v$ with the following properties:
  \begin{enumerate}
  \item $\{v\neq T_\lambda v\}\subset \calO_\lambda$.
   \item $\Vert T_\lambda v\Vert_{L^p(\Omega)}\lesssim\Vert v\Vert_{L^p(\Omega)}$.
   \item $\Vert \nabla T_\lambda v\Vert_{L^p(\Omega)}\lesssim 
    \Vert \nabla v\Vert_{L^p(\Omega)}$.
  \item \label{itm:liptrunc-d} $\vert \nabla T_\lambda v\vert \lesssim \lambda
    \chi_{\calO_\lambda} + \vert \nabla v\vert\chi_{\Omega \setminus
      \calO_\lambda} \leq \lambda$ almost everywhere.
   \item \label{itm:liptrunc-e} $\norm{\nabla (v-T_\lambda v)}_{L^p(\Omega)} \lesssim
     \norm{\nabla v}_{L^p(\mathcal{O}_\lambda)}$.
   \item \label{itm:liptrunc-f} $v_\lambda \to v$ in
     $W^{1,p}_0(\Omega)$ as~$\lambda \to \infty$.
  \end{enumerate}
\end{theorem}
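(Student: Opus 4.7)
The plan is to combine the classical observation that points at which the Hardy--Littlewood maximal function of~$\nabla v$ is bounded by~$\lambda$ form a ``good set'' on which~$v$ is already Lipschitz with constant $\lesssim \lambda$, together with a Whitney-type extension from this good set to all of~$\Omega$. First I would extend~$v$ by zero to $\setR^d$ (still denoted~$v$), which is admissible since $v \in W^{1,p}_0(\Omega)$. The first key step is the pointwise estimate
\begin{align*}
  \abs{v(x) - v(y)} \lesssim \lambda \abs{x-y}
  \qquad\text{for a.e.\ $x,y \in \setR^d \setminus \mathcal{O}_\lambda$,}
\end{align*}
proved by the standard telescoping argument: iterate the Poincaré inequality on a chain of concentric balls joining~$x$ to~$y$, bounding each mean-oscillation by $r_B \dashint_B |\nabla v| \le r_B M(\nabla v)$ at a center belonging to $\setR^d \setminus \mathcal{O}_\lambda$.

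Second, I would construct~$T_\lambda v$ as a Lipschitz extension of $v\vert_{\setR^d\setminus \mathcal{O}_\lambda}$ with Lipschitz constant~$c\lambda$. A plain McShane extension yields the right Lipschitz constant but not the a.e.\ pointwise gradient bound required by (\ref{itm:liptrunc-d}); the sharper statement is obtained via a Whitney decomposition of the open set~$\mathcal{O}_\lambda$ together with a partition of unity subordinate to a controlled enlargement of the cubes, redefining~$T_\lambda v$ on each Whitney cube as a suitable weighted average of values of~$v$ on nearby good points. This yields $\abs{\nabla T_\lambda v} \lesssim \lambda$ on~$\mathcal{O}_\lambda$, while $T_\lambda v = v$ on $\setR^d \setminus \mathcal{O}_\lambda$; combining with the Lebesgue differentiation fact $\abs{\nabla v} \le M(\nabla v) \le \lambda$ a.e.\ on the good set gives~(\ref{itm:liptrunc-d}), and~(a) is by construction. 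Because~$v$ vanishes outside~$\Omega$, the Whitney cubes abutting $\partial\Omega$ see only zero values, so $T_\lambda v \in W^{1,\infty}_0(\Omega)$.

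The remaining items are bookkeeping. Properties~(b) and~(c) follow from the strong $(p,p)$-boundedness of~$M$ on~$L^p(\setR^d)$ together with the pointwise bound $\abs{\nabla T_\lambda v} \lesssim M(\nabla v) + \lambda \chi_{\mathcal{O}_\lambda} \lesssim M(\nabla v)$. Property~(\ref{itm:liptrunc-e}) follows from~(\ref{itm:liptrunc-d}) by splitting the integral over $\mathcal{O}_\lambda$ and its complement, using $\nabla(v - T_\lambda v) = 0$ on the latter and the weak-type estimate $\lambda^p \abs{\mathcal{O}_\lambda} \lesssim \norm{\nabla v}_{L^p(\mathcal{O}_\lambda)}^p$ on the former. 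For~(\ref{itm:liptrunc-f}), observe that $\abs{\mathcal{O}_\lambda} \to 0$ as $\lambda \to \infty$ since $M(\nabla v) < \infty$ almost everywhere, so absolute continuity of the integral together with~(\ref{itm:liptrunc-e}) yields $T_\lambda v \to v$ in~$W^{1,p}_0(\Omega)$.

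The main obstacle, and the point on which~\cite{DKS13,BDS16} invest real technical work, is the construction in the second step: producing an extension with the \emph{pointwise} gradient bound~(\ref{itm:liptrunc-d}) rather than merely a Lipschitz constant, and doing so compatibly with the zero boundary values and with the sharp localization $\{v \neq T_\lambda v\} \subset \mathcal{O}_\lambda$. Once that extension is in place, all other statements reduce to maximal function estimates.
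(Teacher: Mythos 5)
The paper itself does not prove this theorem: it is stated as a black-box tool imported from \cite{DKS13,BDS16} (whose lineage goes back to Acerbi--Fusco \cite{AceF88}), so there is no in-paper proof to compare against. That said, your outline is the standard route taken in those references: extend by zero, establish the pointwise Lipschitz estimate on the good set $\{M(\nabla v)\le\lambda\}$ by a chaining/telescoping Poincar\'e argument, and extend across $\mathcal{O}_\lambda$ by a Whitney decomposition with a subordinate partition of unity, averaging nearby values of~$v$.

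A few subsidiary claims are imprecise, though none is fatal to the sketch. Your reason for rejecting McShane is not quite right: a McShane extension of $v\vert_{\mathcal{O}_\lambda^c}$ is $c\lambda$-Lipschitz and agrees with $v$ on the good set, so Rademacher already gives $\abs{\nabla T_\lambda v}\le c\lambda$ a.e., and since weak gradients of Sobolev functions coincide a.e.\ on any set where the functions agree, McShane does satisfy~(a) and the refined form of~(\ref{itm:liptrunc-d}). What McShane fails to deliver is control on the \emph{values} of $T_\lambda v$ inside $\mathcal{O}_\lambda$ — needed for~(b) — and compatibility with the zero boundary trace when $\mathcal{O}_\lambda$ abuts $\partial\Omega$; that is what the localized Whitney averaging buys. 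Related to this, deriving~(b) from the gradient bound $\abs{\nabla T_\lambda v}\lesssim M(\nabla v)$ is insufficient: that argument gives~(c) only, while~(b) needs the separate pointwise estimate $\abs{T_\lambda v}\lesssim Mv$, which again comes from the averaging structure. Finally, the weak-type inequality $\lambda^p\abs{\mathcal{O}_\lambda}\lesssim\norm{\nabla v}^p_{L^p(\mathcal{O}_\lambda)}$ you invoke for~(\ref{itm:liptrunc-e}) requires the refined (level-dependent) weak-type estimate; the naive splitting produces $\norm{\nabla v}^p_{L^p(\mathcal{O}_{\lambda/2})}$ on the right, absorbed only after relabeling constants — worth making explicit.
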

All our convergence results concerning the relaxation parameter $\eps$
are based on the following result, which shows how the energy
relaxation depends on the truncation interval~$\eps$.
\begin{theorem}
  \label{thm:keyepsconvest}
  The estimate
  \begin{align}
    \label{eq:keyepsconvest}
    \Jeps(u_\eps) - \calJ(u) \lesssim \epslower^p +
    \int\limits_{\calO_\lambda(u)}|\nabla u|^p \d x
 \end{align}
 holds for all $\lambda\le \epsupper/c_1$, where $c_1$ is the
 (hidden) constant from Theorem~\ref{thm:liptrunc}~\ref{itm:liptrunc-d}.
\end{theorem}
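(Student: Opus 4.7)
The plan is to exploit the minimality $\Jeps(u_\eps) \leq \Jeps(v)$ for a single well chosen competitor $v$ and to telescope the resulting difference through the unrelaxed energy~$\calJ$. Since $\Jeps(v) < \infty$ forces $v \in W^{1,2}_0(\Omega)$, I cannot plug in the true solution $u$ itself; instead I would choose $v := T_\lambda u$, the Lipschitz truncation from Theorem~\ref{thm:liptrunc}, which lies in $W^{1,\infty}_0(\Omega) \subset W^{1,2}_0(\Omega)$. This yields
\begin{align*}
  \Jeps(u_\eps) - \calJ(u) \leq \Jeps(T_\lambda u) - \calJ(u) = \bigl[\Jeps(T_\lambda u) - \calJ(T_\lambda u)\bigr] + \bigl[\calJ(T_\lambda u) - \calJ(u)\bigr],
\end{align*}
and the proof reduces to estimating the two brackets separately.

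For the first bracket, the hypothesis $\lambda \leq \epsupper/c_1$ together with property~\ref{itm:liptrunc-d} of Theorem~\ref{thm:liptrunc} forces $|\nabla T_\lambda u| \leq \epsupper$ almost everywhere, so the upper branch of $\kappaeps$ never activates. On the middle branch $[\epslower,\epsupper]$ we have $\kappaeps(t) = \tfrac{1}{p}t^p$, and on $[0,\epslower]$ a direct computation, monotone in $t$ and vanishing at $t = \epslower$, gives $0 \leq \kappaeps(t) - \tfrac{1}{p}t^p \leq \bigl(\tfrac{1}{p}-\tfrac{1}{2}\bigr)\epslower^p$. Integrating over $\Omega$ yields $\Jeps(T_\lambda u) - \calJ(T_\lambda u) \lesssim \epslower^p$, where the hidden constant absorbs $|\Omega|$.

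For the second bracket, both contributions localize to $\calO_\lambda$ because $T_\lambda u$ agrees with $u$ outside $\calO_\lambda$. The triangle inequality together with property~\ref{itm:liptrunc-e} of Theorem~\ref{thm:liptrunc} gives $\|\nabla T_\lambda u\|_{L^p(\calO_\lambda)} \lesssim \|\nabla u\|_{L^p(\calO_\lambda)}$, hence
\begin{align*}
  \int_\Omega \tfrac{1}{p}\bigl(|\nabla T_\lambda u|^p - |\nabla u|^p\bigr) \d x \leq \tfrac{1}{p}\int_{\calO_\lambda}|\nabla T_\lambda u|^p \d x \lesssim \int_{\calO_\lambda}|\nabla u|^p \d x.
\end{align*}
For the linear term I would test the weak equation~\eqref{eq:weakppoisson} for $u$ against $T_\lambda u - u \in W^{1,p}_0(\Omega)$, which is the crucial step that eliminates the possibly rough distribution~$f$ from the estimate:
\begin{align*}
  \langle f, T_\lambda u - u\rangle = \int_{\calO_\lambda} |\nabla u|^{p-2}\nabla u \cdot \nabla(T_\lambda u - u)\d x.
\end{align*}
H\"older's inequality with exponents $p/(p-1)$ and $p$ followed by property~\ref{itm:liptrunc-e} once more bounds this by $\int_{\calO_\lambda}|\nabla u|^p\d x$. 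Combining the two brackets yields~\eqref{eq:keyepsconvest}.

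The main obstacle is conceptual rather than computational: one cannot insert $u$ itself as a competitor, because $\Jeps(u)$ may be infinite when $u \notin W^{1,2}_0(\Omega)$, and a crude regularisation (e.g.\ truncating $|\nabla u|$ pointwise) interacts poorly with a mere distribution $f \in (W^{1,p}_0(\Omega))^\ast$. The Lipschitz truncation $T_\lambda u$ resolves both difficulties simultaneously: it is bounded in $W^{1,\infty}_0$, and its error $T_\lambda u - u$ is supported on~$\calO_\lambda$, so the Euler--Lagrange identity for~$u$ localizes the $f$-pairing to precisely the set that appears in the conclusion.
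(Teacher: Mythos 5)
Your proof is correct and takes essentially the same approach as the paper: choose $T_\lambda u$ as the competitor so that the constraint $|\nabla T_\lambda u|\le\epsupper$ kills the upper branch of $\kappaeps$, test the weak equation for $u$ against $T_\lambda u - u$ to replace the $f$-pairing by an integral over $\calO_\lambda$, and bound everything via property~\ref{itm:liptrunc-e} of the Lipschitz truncation. The only organizational difference is that you split $\Jeps(T_\lambda u)-\calJ(u)$ into the relaxation error $\Jeps(T_\lambda u)-\calJ(T_\lambda u)$ and the truncation error $\calJ(T_\lambda u)-\calJ(u)$, whereas the paper performs an equivalent three-way case analysis on $\kappaeps(|\nabla T_\lambda u|)-\tfrac1p|\nabla u|^p$ directly; both routes produce the same two terms in the final bound.
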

\begin{proof}
  Let $\lambda \leq \epsupper/c_1$ and let~$T_\lambda u$ be the
  Lipschitz truncation of~$u$. Then 
  \begin{align*}
    \abs{\nabla T_\lambda u} \leq c_1\,\lambda \leq \epsupper.
  \end{align*}
  Using the minimizing property of $u_\eps$ and the equation for~$u$
  we get
  % \begingroup
  % \allowdisplaybreaks
  \begin{align*}
    \Jeps(u_\eps) - \calJ(u) &\le \Jeps(T_\lambda u) - \calJ(u) =
    \int\limits_\Omega\left ( \kappaeps(|\nabla T_\lambda u|) - \tfrac 1p
    |\nabla u|^p \right )\d x - \langle f,T_\lambda u - u\rangle
    \\
    &= \int\limits_\Omega \left ( \kappaeps(|\nabla T_\lambda u|) - \tfrac 1p
    |\nabla u|^p \right ) \d x - \int_\Omega |\nabla
    u|^{p-2}\nabla u\cdot\nabla(T_\lambda u - u) \d x.
  \end{align*}
  Using~$\abs{\nabla T_\lambda u} \leq \epsupper$, $\kappaeps(t)=\tfrac
  1p t^p$ for $t \in [\epslower,\epsupper]$, $\kappaeps(t) \leq \tfrac
  1p \epslower^p$ for $t \in [0, \epslower]$, and ~$T_\lambda u=u$
  outside of~$\mathcal{O}_\lambda$ we get
  \begin{align*}
    \kappaeps(|\nabla T_\lambda u|) - \tfrac 1p |\nabla u|^p &\leq
    \begin{cases}
      \frac 1{p} \epslower^p &\qquad \text{on } \set{\abs{\nabla
          T_\lambda u} \leq \epslower},
      \\
      0 &\qquad \text{on } \big(\Omega \setminus
      \mathcal{O}_\lambda\big) \cap \set{\abs{\nabla T_\lambda u} >
        \epslower},
      \\
      \frac 1{p} \abs{\nabla T_\lambda u}^p &\qquad \text{on }
      \mathcal{O}_\lambda  \cap \set{\abs{\nabla T_\lambda u} >
        \epslower}.
    \end{cases}
  \end{align*}
  This, the previous estimate and
  Theorem~\ref{thm:liptrunc}~\ref{itm:liptrunc-e} imply
  \begin{align*}
    \Jeps(u_\eps) - \calJ(u) 
    &\leq \abs{\Omega}\,\tfrac 1p \epslower^p + \int\limits_{\calO_\lambda(u)}
    \tfrac 1p |\nabla T_\lambda u|^p \d x + \int\limits_{\calO_\lambda(u)} |\nabla
    u|^{p-1} \abs{\nabla(T_\lambda u - u)}\d x
    \\
    &\lesssim \epslower^p + \int\limits_{\calO_\lambda(u)} |\nabla
    T_\lambda u|^p \d x + \int\limits_{\calO_\lambda(u)} |\nabla u|^p \d
    x + \int\limits_{\calO_\lambda(u)} |\nabla (T_\lambda u-u)|^p \d x
    \\
    &\lesssim \epslower^p + \int\limits_{\calO_\lambda(u)} |\nabla
    u|^p \d x. 
  \end{align*}
  This proves the claim. \qed
\end{proof}
% The part where $|\nabla u|<\epslower$ is estimated by $\epslower^p$,
% whereas the part where $|\nabla u|>\epsupper$ is estimated by the
% integral over $\calO_\lambda(u)$. $|\nabla u|^p$ is integrable for any
% choice of the data. Hence, $\calO_\lambda(u)$ will be small which we
% will use in the next lemma.
\begin{corollary}
  \label{cor:convconstrained}
  $\Jeps(u_\eps) \to \calJ(u)$ and $\mathcal{J}(u_\eps) \to \calJ(u)$
  as $\eps\to [0,\infty]$.
\end{corollary}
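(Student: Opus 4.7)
The plan is to derive both convergences directly from Theorem~\ref{thm:keyepsconvest}, which already does the heavy lifting via the Lipschitz truncation. The main convergence to establish is the stronger one $\Jeps(u_\eps) \to \calJ(u)$, since the other then follows from the two-sided sandwich in \eqref{eq:2}.

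First I would fix an arbitrary $\delta>0$ and show that the right-hand side of \eqref{eq:keyepsconvest} can be made smaller than~$\delta$ by taking $\eps$ sufficiently small from below and large enough from above. For the first term this is trivial: $\epslower^p \to 0$ as $\epslower \searrow 0$. For the second term, note that $\nabla u \in L^p(\Omega)$, so by the weak-type $(p,p)$ inequality for the Hardy--Littlewood maximal operator,
\begin{align*}
  \abs{\calO_\lambda(u)} = \abs{\set{M(\nabla u)>\lambda}} \lesssim \lambda^{-p} \norm{\nabla u}_{L^p(\Omega)}^p \to 0 \qquad \text{as } \lambda \to \infty.
\end{align*}
Hence, by absolute continuity of the Lebesgue integral,
\begin{align*}
  \int_{\calO_\lambda(u)} |\nabla u|^p \d x \to 0 \qquad \text{as } \lambda \to \infty.
\end{align*}

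Next I would pick $\lambda_0$ so large that this integral is below~$\delta$, and then restrict attention to those $\eps = [\epslower,\epsupper]$ with $\epsupper \geq c_1 \lambda_0$ and $\epslower^p \leq \delta$. Theorem~\ref{thm:keyepsconvest} with $\lambda = \lambda_0$ is then applicable and yields
\begin{align*}
  0 \le \Jeps(u_\eps) - \calJ(u) \lesssim \delta.
\end{align*}
Since $\delta>0$ was arbitrary, this proves the first convergence $\Jeps(u_\eps) \to \calJ(u)$ as $\eps \to [0,\infty]$ (the lower bound $\Jeps(u_\eps) \geq \calJ(u_\eps) \geq \calJ(u)$ coming from $\kappa_\eps \geq \tfrac{1}{p}t^p$ and the minimizing property of $u$, as already observed in \eqref{eq:2}).

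Finally, chaining this with \eqref{eq:2} gives
\begin{align*}
  0 \leq \calJ(u_\eps) - \calJ(u) \leq \Jeps(u_\eps) - \calJ(u) \to 0,
\end{align*}
which is the second claim. I do not expect any real obstacle here: the argument is essentially bookkeeping on the two parameters. The only point requiring minor care is coupling the choice of the Lipschitz truncation level~$\lambda$ to~$\epsupper$ through the constraint $\lambda \le \epsupper/c_1$, which is why one first fixes $\lambda_0$ and then shrinks~$\eps$; the $\Gamma$-convergence viewpoint mentioned in Remark~\ref{rem:gammlimit} would be a natural alternative packaging, but the direct estimate above is cleaner and in line with the quantitative flavour of the section.
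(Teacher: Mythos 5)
Your proof is correct and follows essentially the same route as the paper: both reduce to showing $\Jeps(u_\eps)\to\calJ(u)$ via \eqref{eq:2}, and both control the right-hand side of \eqref{eq:keyepsconvest} by sending $\epslower^p\to 0$ and using the weak-type $(p,p)$ bound for the maximal operator together with absolute continuity of the integral to kill the $\calO_\lambda$-term. The only cosmetic difference is that you fix a threshold $\lambda_0$ first and then constrain $\epsupper\ge c_1\lambda_0$, whereas the paper directly takes $\lambda:=\epsupper/c_1$; these are equivalent.
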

\begin{proof}
  Due to~\eqref{eq:2} it suffices to prove~$\Jeps(u_\eps) \to
  \calJ(u)$.  Consider the right-hand side of~\eqref{eq:keyepsconvest}
  with $\lambda := \epsupper/c_1$. The first term goes to zero
  as~$\epslower \to 0$. Now consider the second
  term. Since~$\calO_\lambda(u) \subset \set{M(\nabla u) >
    \lambda}$ and~$\nabla u\in L^p(\Omega)$ we get by the
  weak~$L^p$-estimate of the maximal operator $|\calO_\lambda(u)|
  \lesssim \lambda^{-p} \norm{\nabla u}_p^p$. Therefore
  $|\calO_\lambda(u)|\to 0$ as~$\epsupper \to \infty$, which implies
  $\int_{\calO_\lambda(u)} |\nabla u|^p \d x \to 0$ as $\epsupper \to
  \infty$.  \qed
\end{proof}
Before we continue we need the following natural quantities,
see~\cite{DE08}. \begin{definition}\label{def:AandV} For $P\in\setR^d$
  we define
  \begin{align*}
    \Aeps(P):= \begin{cases}\frac{\phieps'(\vert P\vert)}{\vert
        P\vert}P &\text{if } P\neq 0\\ 0&\text{if
      }P=0\end{cases}\quad\text{and}\quad
    \Veps(P):=\begin{cases}\sqrt{\frac{\phieps'(\vert P\vert)}{\vert
          P\vert}}P&\text{if }P\ne 0\\0&\text{if }P=0.\end{cases}
  \end{align*}
  Moreover, by $A := A_{[0,\infty]}$ and $V := V_{[0,\infty]}$ we
  denote the unrelaxed versions.
\end{definition}
The following two lemmas are modifications of similar results of
\cite[Lemma~3]{DE08} and~\cite[Lemma~16]{DieK08}. In fact, they follow
from the properties of uniformly convex Orlicz functions; see
Section~\ref{sec:orlicz-functions} from the Appendix for more details.
\begin{lemma}\label{lem:phistructure}
  For $P,Q\in\setR^d$
  \begin{align*}
    (\Aeps(P)-\Aeps(Q))\cdot (P-Q)\eqsim \frac{\phieps'(|P|\vee
    |Q|)}{|P|\vee |Q|}\vert P-Q\vert^2 \eqsim \abs{V_\epsilon(P) -
    V_\epsilon(Q)}^2. 
  \end{align*}
  where the constants can be chosen independently of $\eps$.
\end{lemma}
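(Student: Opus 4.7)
My plan is to follow the standard Orlicz–monotonicity strategy used in \cite{DE08, DieK08}, adapted to the piecewise-defined $\phi_\epsilon$. The three quantities in the lemma all arise as the "size" of the increment along the segment $[Q,P]$, weighted by a quantity that behaves like $\phi_\epsilon'/t$ at the scale $\abs{P}\vee\abs{Q}$, so the strategy is to parametrise $P_t := Q + t(P-Q)$ and reduce each of the three quantities to an integral over $t\in[0,1]$ of the same weight.

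First I would collect two elementary properties of $\phi_\epsilon$ that make the Simonenko indices lie in $[p,2]$, uniformly in $\epsilon$: from~\eqref{eq:phiepsprime} the quotient $t\mapsto \phi_\epsilon'(t)/t = (\epsilon_-\vee t\wedge\epsilon_+)^{p-2}$ is quasi-monotone with constants depending only on~$p$ (it is decreasing on $(\epsilon_-,\epsilon_+)$ and constant outside). Consequently $\phi_\epsilon'(s)/s \eqsim \phi_\epsilon'(t)/t$ whenever $s\eqsim t$, and for any segment $[Q,P]$ there is a subinterval of $t\in[0,1]$ of length $\gtrsim 1$ on which $\abs{P_t}\eqsim \abs{P}\vee\abs{Q}$. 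At the kinks $\epsilon_-,\epsilon_+$ the function $\phi_\epsilon'$ is only Lipschitz; as the footnote notes, I replace $\phi_\epsilon''$ in the classical arguments by the upper and lower one-sided derivatives of $\phi_\epsilon'$, which satisfy $(\phi_\epsilon')_\pm'(t) \eqsim \phi_\epsilon'(t)/t$ (both are equal to $(p-1)t^{p-2}$ on $(\epsilon_-,\epsilon_+)$, equal to $\epsilon_\pm^{p-2}$ in the truncation regions, and the jumps at the kinks only make the one-sided derivative smaller, preserving one inequality and leaving the other untouched).

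For the equivalence with $\abs{V_\epsilon(P)-V_\epsilon(Q)}^2$, I would note that $V_\epsilon(P) = \sqrt{\phi_\epsilon'(\abs{P})/\abs{P}}\,P$ is the gradient (in $P$) of a radial, rotationally invariant function, so $DV_\epsilon(P)$ has only two eigenvalues, the radial one (the one-sided derivative of $t\mapsto \sqrt{t\phi_\epsilon'(t)}$) and the tangential one $\sqrt{\phi_\epsilon'(\abs{P})/\abs{P}}$. A direct computation, using the Simonenko bounds, gives that both eigenvalues are $\eqsim \sqrt{\phi_\epsilon'(\abs{P})/\abs{P}}$ uniformly in $\epsilon$. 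Writing $V_\epsilon(P)-V_\epsilon(Q)=\int_0^1 DV_\epsilon(P_t)(P-Q)\,dt$ and using that the integrand has sign-definite components, one obtains
\begin{align*}
  \abs{V_\epsilon(P)-V_\epsilon(Q)}^2 \eqsim \Bigl(\int_0^1 \sqrt{\tfrac{\phi_\epsilon'(\abs{P_t})}{\abs{P_t}}}\,dt\Bigr)^{\!2}\abs{P-Q}^2.
\end{align*}
Since on a subinterval of length comparable to $1$ we have $\abs{P_t}\eqsim \abs{P}\vee \abs{Q}$, the quasi-monotonicity of $\phi_\epsilon'/t$ collapses the integral to $\phi_\epsilon'(\abs{P}\vee\abs{Q})/(\abs{P}\vee\abs{Q})$, giving the middle-to-right equivalence.

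For the left-to-middle equivalence I use exactly the same device: writing $A_\epsilon(P)-A_\epsilon(Q)=\int_0^1 DA_\epsilon(P_t)(P-Q)\,dt$ and pairing with $P-Q$, I exploit that $DA_\epsilon$ (as the Hessian of the radial convex function $\phi_\epsilon\circ \abs{\cdot}$) has radial eigenvalue $(\phi_\epsilon')_\pm'(\abs{P})$ and tangential eigenvalue $\phi_\epsilon'(\abs{P})/\abs{P}$, both $\eqsim \phi_\epsilon'(\abs{P})/\abs{P}$. Then the same segment argument reduces the quadratic form to $\frac{\phi_\epsilon'(\abs{P}\vee\abs{Q})}{\abs{P}\vee\abs{Q}}\abs{P-Q}^2$. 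The main obstacle, and the only place where the argument is not verbatim that of \cite{DE08,DieK08}, is the lack of $C^2$-regularity of $\phi_\epsilon$ at the two kinks; this is handled by the one-sided-derivative replacement described above (or equivalently by splitting $[0,1]$ into the at most three subintervals on which $\abs{P_t}$ stays in one of the three regions $[0,\epsilon_-]$, $[\epsilon_-,\epsilon_+]$, $[\epsilon_+,\infty)$, applying the smooth version on each and then gluing, using the quasi-monotonicity to recombine).
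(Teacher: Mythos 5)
Your plan follows the same route the paper takes: the paper itself does not spell out the argument but points to \cite{DE08} (Lemma~3), \cite{DieK08} (Lemma~16) and \cite{RD06}, and in the footnote to Definition~\ref{def:AandV} explains that the only modification needed is to replace~$\phi_\epsilon''$ by the upper and lower one-sided derivatives of~$\phi_\epsilon'$ at the kinks~$\epsilon_-,\epsilon_+$; you carry out exactly this programme (segment parametrisation~$P_t=(1-t)Q+tP$, eigenvalue bounds for~$DA_\epsilon$ and~$DV_\epsilon$ of the form $\eqsim \phi_\epsilon'(t)/t$, quasi-monotonicity of~$t\mapsto \phi_\epsilon'(t)/t$ from~\eqref{eq:phiepsprime}, and the subinterval on which~$\abs{P_t}\eqsim\abs{P}\vee\abs{Q}$). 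Two small remarks on precision, neither a gap: the step that bounds $\abs{V_\epsilon(P)-V_\epsilon(Q)}$ from \emph{below} is better justified not by ``sign-definite components'' but by pairing the integral identity with~$P-Q$ and then applying Cauchy--Schwarz, which is how \cite{DE08,DieK08} actually close the argument; and the jumps of~$\phi_\epsilon''$ at~$\epsilon_-$ and~$\epsilon_+$ are in opposite directions (down by a factor~$p-1$ at~$\epsilon_-$, up by~$1/(p-1)$ at~$\epsilon_+$), but since $p-1$ is a fixed constant in~$(0,1]$ all one-sided values remain~$\eqsim \phi_\epsilon'(t)/t$ uniformly in~$\epsilon$, which is all that is needed.
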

\begin{proof}
  This follows directly from the uniform convexity of~$\phi_\epsilon$,
  see Lemma~\ref{lem:phi-eps-uniformly-convex}, Lemma~\ref{lem:hammer}
  and Lemma~\ref{lem:phi-shift-equiv}. \qed
\end{proof}
\begin{lemma}
  \label{lem:JAV}
  The following estimates hold for arbitrary $v\in
  W_0^{1,\phieps}(\Omega)$ and $u_\eps$ being the minimizer of
  $\Jeps$:
  \begin{align*}
    \Jeps(v)-\Jeps(u_\eps)&\le \int\limits_\Omega (\Aeps(\nabla
    v)-\Aeps(\nabla u_\eps))\cdot\nabla (v-u_\eps)\d x
    \\
    &\lesssim \int\limits_\Omega |\Veps(\nabla v)-\Veps(\nabla
    u_\eps)|^2\d x
    \\
    &\lesssim \Jeps(v)-\Jeps(u_\eps) .
  \end{align*}
  In particular, for the case where $\varepsilon=[0,\infty]$ the
  statement actually implies also
  \begin{align*}
    \mathcal J(v)-\mathcal J(u)& \eqsim \int\limits_\Omega |V(\nabla v)-V(\nabla
                                 u)|^2\d x.
  \end{align*}
\end{lemma}
\begin{proof}
  This is just Lemma~\ref{lem:JAVpre} applied to~$\phi_\epsilon$. \qed
\end{proof}

We are now prepared to show the convergence of minimizers~$u_\epsilon$
of~$\mathcal{J}_\epsilon$ to~$u$.
\begin{corollary}
  \label{cor:convconstrained2}
  $u_\epsilon \to u$ in $W^{1,p}_0(\Omega)$ as $\eps\to[0,\infty]$.
\end{corollary}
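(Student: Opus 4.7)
The plan is to combine the energy convergence from Corollary~\ref{cor:convconstrained} with weak compactness in $W^{1,p}_0(\Omega)$ and the uniform convexity of that space, following the sketch hinted at in the text.

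First I would establish a uniform bound on $\norm{\nabla u_\epsilon}_p$. Since $\mathcal{J}(u_\epsilon) \leq \mathcal{J}_\epsilon(u_\epsilon)$ and the right-hand side converges to $\mathcal{J}(u)$ by Corollary~\ref{cor:convconstrained}, the family $\mathcal{J}(u_\epsilon)$ is bounded. Combined with $\mathcal{J}(v) = \tfrac{1}{p}\norm{\nabla v}_p^p - \langle f, v\rangle$, the duality estimate $|\langle f,v\rangle| \leq \norm{f}_\ast \norm{\nabla v}_p$, Poincar\'e's inequality and Young's inequality, this yields the required uniform bound on $\norm{\nabla u_\epsilon}_p$.

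Next I would extract a weakly convergent subsequence $u_{\epsilon_k} \rightharpoonup \tilde u$ in $W^{1,p}_0(\Omega)$. Because $\mathcal{J}$ is convex and strongly continuous on $W^{1,p}_0(\Omega)$, it is weakly lower semicontinuous, so
\begin{align*}
\mathcal{J}(\tilde u) \leq \liminf_{k\to\infty} \mathcal{J}(u_{\epsilon_k}) = \mathcal{J}(u).
\end{align*}
Uniqueness of the minimizer forces $\tilde u = u$. A standard subsequence-of-subsequence argument then shows that the whole net satisfies $u_\epsilon \rightharpoonup u$ in $W^{1,p}_0(\Omega)$ as $\epsilon \to [0,\infty]$.

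Finally, to upgrade to strong convergence, the weak convergence and $f \in (W^{1,p}_0(\Omega))^\ast$ give $\langle f, u_\epsilon\rangle \to \langle f, u\rangle$; combining this with $\mathcal{J}(u_\epsilon) \to \mathcal{J}(u)$ yields $\norm{\nabla u_\epsilon}_p \to \norm{\nabla u}_p$. Since $L^p(\Omega;\setR^d)$ is uniformly convex for $1 < p \leq 2$, weak convergence of $\nabla u_\epsilon$ together with norm convergence upgrades to strong $L^p$-convergence of the gradients, and hence $u_\epsilon \to u$ in $W^{1,p}_0(\Omega)$. There is no real obstacle here; the only mildly delicate point is working with a two-parameter net rather than a sequence, but this dissolves once uniqueness of the subsequential limit is invoked. (One could alternatively try to pass directly through the quasi-norms $|V_\epsilon(\nabla u) - V_\epsilon(\nabla u_\epsilon)|$ via Lemma~\ref{lem:JAV}, but this is obstructed by the fact that $\mathcal{J}_\epsilon(u)$ may be infinite when $u \notin W^{1,2}_0(\Omega)$, so the uniform-convexity route is cleaner.)
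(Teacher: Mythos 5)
Your proof is correct, and it follows the soft argument that the paper itself sketches in the paragraph preceding Lemma~\ref{lem:JAV} (``Since $W^{1,p}_0(\Omega)$ is uniformly convex, this strong convergence is a consequence of weak convergence and energy convergence''). However, the proof the paper actually writes for this corollary goes a different way: it applies Lemma~\ref{lem:JAV} with the \emph{unrelaxed} parameter $\epsilon=[0,\infty]$, for which $\mathcal{J}_{[0,\infty]} = \mathcal{J}$, $V_{[0,\infty]} = V$, and $W^{1,\phi_{[0,\infty]}}_0 = W^{1,p}_0$. Taking $v = u_\epsilon$ in that lemma immediately gives
\begin{align*}
  \int_\Omega \abs{V(\nabla u_\epsilon) - V(\nabla u)}^2 \d x \lesssim \mathcal{J}(u_\epsilon) - \mathcal{J}(u) \to 0,
\end{align*}
and then the shift-change lemma \cite[Corollary~26]{DieK08} converts the $L^2$-smallness of the $V$-differences into $L^p$-smallness of $\nabla u_\epsilon - \nabla u$. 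Your parenthetical objection at the end — that the $V_\epsilon$ route is ``obstructed'' because $\mathcal{J}_\epsilon(u)$ may be $+\infty$ — is a near miss: the obstruction disappears precisely because the paper uses $V = V_{[0,\infty]}$ rather than $V_\epsilon$ with a finite truncation interval. The two approaches trade off as follows: yours is pure functional analysis (coercivity $\Rightarrow$ boundedness $\Rightarrow$ weak compactness $\Rightarrow$ uniqueness of the limit $\Rightarrow$ norm convergence $\Rightarrow$ uniform convexity), requires no machinery beyond reflexivity and the Radon–Riesz property, and generalizes readily; the paper's is more quantitative, giving an explicit modulus of continuity $\norm{V(\nabla u_\epsilon) - V(\nabla u)}_2^2 \lesssim \mathcal{J}(u_\epsilon)-\mathcal{J}(u)$ which is consistent with the rate estimates used later in Section~\ref{sec:algebraic}, at the cost of invoking the $\phi$-structure lemmas and the shift-change lemma. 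One minor point on yours: the ``subsequence-of-subsequence'' step for the two-parameter net $\epsilon\to[0,\infty]$ deserves a word more care than you give it, but since $\mathcal{J}_\epsilon$ is monotone in $\epsilon$ and the claim is metric, reducing to sequences $\epsilon_n\to[0,\infty]$ is legitimate, as you implicitly assume.
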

\begin{proof}
  Due to Corollary~\ref{cor:convconstrained} we have
  $\mathcal{J}(u_\epsilon) - \mathcal{J}(u) \to 0$ as~$\epsilon \to
  [0,\infty]$. Now Lemma~\ref{lem:JAV} for the case where~$\epsilon =[0,\infty]$ 
  and $v =u_\varepsilon$  implies $V(\nabla u_\epsilon) \to V(\nabla u)$
  in~$L^2(\Omega)$. It follows from the shift-change-lemma, see
  Corollary~\ref{cor:shift-change-new} or 
  \cite[Corollary~26]{DieK08},  that for all~$\delta>0$ there
  exists~$c_\delta>0$ such that
  \begin{align*}
    \abs{\nabla u - \nabla u_\epsilon}^p &\leq c_\delta \abs{V(\nabla
      u_\epsilon) - V(\nabla u)}^2 + \delta \abs{\nabla u}^p.
  \end{align*}
  This and $V(\nabla u_\epsilon) \to V(\nabla u)$
  in~$L^2(\Omega)$ implies $\nabla u_\epsilon \to \nabla u$
  in~$L^p(\Omega)$. \qed
\end{proof}

\begin{remark}[$\Gamma$-convergence]
  \label{rem:gammlimit}
  It is also possible to deduce~$\mathcal{J}_\epsilon(u_\epsilon) \to
  \mathcal{J}(u)$ and $u_\epsilon \to u$ in~$W^{1,p}_0(\Omega)$ by
  means of~$\Gamma$-convergence: As the underlying topological space
  we choose $W^{1,p}_0(\Omega)$ equipped with the weak topology. Then
  the Lipschitz truncation provides a recovery sequence for~$v \in
  W^{1,p}_0(\Omega)$ implying $\Gammalim_{\epsilon \to
    [0,\infty]} \mathcal{J}_\epsilon = \mathcal{J}$.  Indeed, it
  follows as in the  proof of Theorem~\ref{thm:keyepsconvest} that
  for all~$v \in W^{1,p}_0(\Omega)$
  \begin{align*}
    \bigabs{\mathcal{J}_\epsilon(T_{\epsupper/c_1} v) -
      \mathcal{J}(v)} &\lesssim \epslower^p +
    \int_{\mathcal{O}_{\epsupper/c_1}(v)} \abs{\nabla v}^p\,dx +
    \bigabs{ \skp{f}{T_{\epsupper/c_1} v -v}}.
  \end{align*}
  So the properties of the Lipschitz truncation, see
  Theorem~\ref{thm:liptrunc}~\ref{itm:liptrunc-f}, imply that the
  right-hand side goes to zero as~$\epsilon \to [0,\infty]$. Hence,
  $T_{\epsupper/c_1} v$ is a recovery sequence of~$v$. 

  Moreover, $\mathcal{J}_\epsilon \geq \mathcal{J}$, so the standard
  theory of $\Gamma$-convergence  \cite{MR1201152,MR1968440} proves~$u_\epsilon \weakto u$
  in~$W^{1,p}_0(\Omega)$ and $\mathcal{J}_\epsilon(u_\epsilon) \to
  \mathcal{J}(u)$ for~$\epsilon \to [0,\infty]$. The uniform convexity
  of~$W^{1,p}_0(\Omega)$ implies $u_\epsilon \to u$
  in~$W^{1,p}_0(\Omega)$.

  To our knowledge this is the first time that the Lipschitz
  truncation is used to construct a recovery sequence for the $\Gamma-\lim\sup$ in a $\Gamma$-convergence argument related to energies on $W^{1,p}_0(\Omega)$.
\end{remark}

Up to now, we discussed the convergence of~$u_\epsilon \to u$ without
any additional assumptions on the data~$f \in
(W_0^{1,p}(\Omega))^\ast$ and the domain~$\Omega$. If~$f$ is more
regular and~$\partial \Omega$ is suitably smooth, then we obtain
specific rates for the convergence. 
The rates of convergence will follow from the regularity of~$\nabla u$
in terms of the weak-$L^q$ spaces $L^{q,\infty}(\Omega)$, which
consists of all functions~$v$ such that
\begin{align*}
  % \|v\|_{L^{q,\infty}(\Omega)} := \sup\limits_{t>0} \Big( t
  % |\{x\in\Omega\,:\, |v(x)|>t\}|^{\frac 1q} \Big) < \infty,
  % \\
  \|v\|_{L^{q,\infty}(\Omega)} := \sup\limits_{t>0} \norm{t\,
    \chi_{\set{\abs{v}>t}}}_{L^q(\Omega)} < \infty.
\end{align*}

\begin{lemma}\label{lem:convLorentz}
  Let $\nabla u\in L^{q,\infty}(\Omega)$ for some $q>p$. Then,
  \begin{align*}
    \Jeps(u_\eps)- \calJ(u) \lesssim \epslower^p +
    \epsupper^{-(q-p)}\,\|\nabla u\|_{L^{q,\infty}(\Omega)}^q.
  \end{align*}
\end{lemma}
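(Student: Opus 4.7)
The plan is to apply Theorem~\ref{thm:keyepsconvest} with the largest admissible choice~$\lambda=\epsupper/c_1$, and then upgrade the right-hand side integral $\int_{\mathcal{O}_\lambda(u)}|\nabla u|^p\d x$ to the stated power of~$\epsupper$ by exploiting the weak-$L^q$ control on~$\nabla u$. With this choice the first term of~\eqref{eq:keyepsconvest} already delivers~$\epslower^p$, so the task reduces to showing
\[
\int_{\mathcal{O}_\lambda(u)} |\nabla u|^p \d x \lesssim \lambda^{-(q-p)} \|\nabla u\|_{L^{q,\infty}(\Omega)}^q
\]
with $\lambda \eqsim \epsupper$.

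For this I would proceed in two steps. First, since $q>p>1$, the Hardy-Littlewood maximal operator maps~$L^{q,\infty}$ to~$L^{q,\infty}$ (this is standard; it follows from the weak-type $(1,1)$ and strong-type $(\infty,\infty)$ bounds together with Marcinkiewicz interpolation in the Lorentz scale). Consequently
\[
|\mathcal{O}_\lambda(u)| \leq |\set{M(\nabla u) > \lambda}| \lesssim \lambda^{-q}\,\|\nabla u\|_{L^{q,\infty}(\Omega)}^q.
\]
Second, I would split the integral via the layer cake formula, writing $K:=\|\nabla u\|_{L^{q,\infty}(\Omega)}^q$ and using the two competing bounds
\[
|\mathcal{O}_\lambda(u) \cap \set{|\nabla u|>t}| \leq \min\bigl(|\mathcal{O}_\lambda(u)|,\ t^{-q} K\bigr)
\]
for the set where $|\nabla u|>t$. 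Cutting the $t$-integration at $t=\lambda$ and using the first bound for $t\leq\lambda$ and the second for $t>\lambda$ yields, after a straightforward computation using $q>p$,
\[
\int_{\mathcal{O}_\lambda(u)} |\nabla u|^p \d x = \int_0^\infty p t^{p-1} |\mathcal{O}_\lambda(u)\cap\set{|\nabla u|>t}|\d t \lesssim \lambda^{p-q} K.
\]
Substituting $\lambda = \epsupper/c_1$ gives the desired $\epsupper^{-(q-p)}\|\nabla u\|_{L^{q,\infty}(\Omega)}^q$, and combining with the $\epslower^p$ contribution from Theorem~\ref{thm:keyepsconvest} completes the proof.

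The only nontrivial point is the boundedness of $M$ on $L^{q,\infty}$; everything else is the level-set decomposition of an $L^p$-integral over a set of controlled measure. Since the Lorentz-boundedness of the maximal operator is classical, the main obstacle is really just the bookkeeping of the split integral and verifying that the constant in the split is independent of~$\eps$ (which it is, as it only involves~$p,q$ and the dimension).
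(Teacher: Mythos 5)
Your proof is correct, and it reaches the same estimate by a genuinely different, more elementary route than the paper. Both proofs start from Theorem~\ref{thm:keyepsconvest} with $\lambda=\epsupper/c_1$, and both first invoke the boundedness of the Hardy--Littlewood maximal operator on $L^{q,\infty}$ to get $\abs{\mathcal{O}_\lambda(u)}\lesssim\lambda^{-q}\norm{\nabla u}_{L^{q,\infty}}^q$ (the paper cites extrapolation theory for this, while you invoke Marcinkiewicz/Calder\'on--Hunt interpolation in the Lorentz scale; either is standard). Where the two diverge is in bounding $\int_{\mathcal{O}_\lambda(u)}\abs{\nabla u}^p\,dx$: the paper uses the Lorentz-space duality $(L^{s',1})^*=L^{s,\infty}$ to write the integral as a pairing $\bignorm{\abs{\nabla u}^p}_{L^{q/p,\infty}}\norm{\chi_{\mathcal{O}_\lambda(u)}}_{L^{q/(q-p),1}}$ and then inserts the measure bound, whereas you use the layer-cake formula directly, splitting at $t=\lambda$ and using the measure bound for $t\leq\lambda$ and the weak-$L^q$ tail $\abs{\set{\abs{\nabla u}>t}}\leq t^{-q}\norm{\nabla u}_{L^{q,\infty}}^q$ for $t>\lambda$. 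Your split-integral computation is correct ($q>p$ makes the tail integral converge and both pieces give $\lambda^{p-q}\norm{\nabla u}_{L^{q,\infty}}^q$), and it avoids the duality machinery entirely, at the cost of a short explicit computation; the paper's duality argument is more compact but presupposes familiarity with the $L^{s,\infty}$--$L^{s',1}$ pairing. Both approaches produce constants depending only on $p,q,d,\abs{\Omega}$ and independent of $\epsilon$, so there is no gap in either.
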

\begin{proof}
  First note that $M:L^{q,\infty}(\Omega)\to L^{q,\infty}(\Omega)$ is
  bounded. This follows for example by extrapolation theory,
  see~\cite[Theorem~1.1]{CruMarPer04}. In particular,
  \begin{align*}
    \lambda \abs{\calO_\lambda(u)}^{\frac 1q} \eqsim
    \norm{\lambda\,\chi_{\set{M(\nabla u)> \lambda}}}_{L^q(\Omega)}
    \leq \norm{M(\nabla u)}_{L^{q,\infty}(\Omega)} \lesssim
    \norm{\nabla u}_{L^{q,\infty}(\Omega)}.
  \end{align*}
  Moreover, let $L^{q,1}(\Omega)$ denote the usual Lorentz space,
  which consists of functions~$v$ such  that
  \begin{align*}
    \|v\|_{L^{q,1}(\Omega)} := q\int\limits_0^\infty |\{|v|>
    t\}|^\frac{1}{q}\d t = q \int\limits_0^\infty \norm{t\,
      \chi_{\set{\abs{v}>t}}}_{L^q(\Omega)} \frac{dt}{t} < \infty.
  \end{align*}
  Since $(L^{s',1})^* =L^{s,\infty}$ for $1<s<\infty$ and
  $\frac{1}{s}+\frac{1}{s'}=1$ we obtain
  % Since $\|\chi_{\calO_\lambda(u)}\|_{L^{\frac{q}{q-p},1}(\Omega)} =
  % \tfrac{q}{q-p} |\calO_\lambda(u)|^\frac{q-p}{q}$ we can deduce
  \begin{align*}
    \int\limits_{\calO_\lambda(u)} |\nabla u|^p \d x &\lesssim
    \bignorm{|\nabla u|^p}_{L^{\frac{q}{p},\infty}(\Omega)}\|
    \chi_{\calO_\lambda(u)}\|_{L^{\frac{q}{q-p},1}(\Omega)}
    \\
    &\eqsim \|\nabla u\|_{L^{q,\infty}(\Omega)}^p
    \abs{\mathcal{O}_\lambda(u)}^{\frac{q-p}{q}}
    \\
    &\lesssim \|\nabla u\|_{L^{q,\infty}(\Omega)}^q \,\lambda^{-(q-p)},
  \end{align*}
  where~$\abs{\mathcal{O}_\lambda}$ denotes the Lebesgue measure
  of~$\mathcal{O}_\lambda$.  Applying Theorem~\ref{thm:keyepsconvest}
  with $\lambda := \epsupper/c_1$ yields the statement. \qed
\end{proof}
To exemplify the consequences of
Lemma~\ref{lem:convLorentz} we combine it with the regularity results
of~\cite{CM10} and~\cite{E02}:
\begin{theorem}[\cite{CM10}, Theorems~1.3 and 1.4]\label{thm:regCM}
  Let $\Omega\subset\setR^d$ be convex or let its boundary $\partial\Omega\in W^2
  L^{d-1,1}$ (for example $\partial \Omega \in C^2$ suffices\footnote{The condition~$\partial \Omega \in W^2
    L^{d-1,1}$ means that the weak Hessians of the local maps
    characterizing the boundary are in the
    Lorentz space~$L^{d-1,1}$}) and additionally $f\in L^{d,1}(\Omega)$. Then
  $\nabla u \in L^\infty(\Omega)$.
\end{theorem}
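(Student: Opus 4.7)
The result is a classical global Lipschitz regularity statement for the $p$-Poisson problem which we are quoting from Cianchi and Maz'ya \cite{CM10}; a self-contained proof is beyond the scope of this paper, but the strategy one would follow can be outlined as follows. The plan is to bound the decreasing rearrangement of $\abs{\nabla u}$ by a Riesz-type potential of the rearrangement $f^*$, and then to exploit the well-known embedding $I_1\,:\,L^{d,1}(\Omega)\to L^\infty(\Omega)$ for the first-order Riesz potential.

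First, I would regularize the equation (e.g.\ by replacing $\abs{\nabla u}^{p-2}$ with $(\eps+\abs{\nabla u}^2)^{(p-2)/2}$ and mollifying $f$) so that $u$ is smooth enough to differentiate the equation and carry out the manipulations legitimately. Testing the differentiated equation against suitable cutoffs of the superlevel sets of $\abs{\nabla u}^p$ and integrating by parts produces an interior contribution that, after applying the isoperimetric/isocapacitary inequality, yields a differential inequality of Talenti type for $(\abs{\nabla u}^p)^*$ on $(0,\abs{\Omega})$, together with a boundary contribution involving the mean curvature of~$\partial\Omega$.

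Second, I would close this integro-differential inequality by using the geometric hypothesis: if $\Omega$ is convex, the mean curvature has a favorable sign and the boundary term can be discarded; if $\partial\Omega\in W^2 L^{d-1,1}$, the mean curvature lies in $L^{d-1,1}(\partial\Omega)$, which is exactly the borderline Lorentz space needed to absorb the boundary contribution into the interior potential estimate. Integrating the resulting ODE yields a pointwise bound of the form $(\abs{\nabla u}^p)^*(s)\lesssim \int_0^{\abs{\Omega}} K(s,\sigma) f^*(\sigma)\d \sigma$ with a kernel $K$ of Riesz type. The assumption $f\in L^{d,1}(\Omega)$ then makes the right-hand side uniformly bounded in~$s$, and passing to the limit in the regularization gives $\nabla u\in L^\infty(\Omega)$.

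The main technical obstacle is the treatment of the borderline boundary term: with only $W^2 L^{d-1,1}$ regularity one cannot afford to work in Lebesgue scales (the naive $L^{d-1}$/$L^d$ setting fails to yield $L^\infty$ gradients), so the estimates must be carried out sharply in Lorentz spaces, relying on O'Neil-type inequalities for convolutions and on estimates for rearrangements that remain uniform in the regularization parameter. This Lorentz-scale bookkeeping, rather than any individual estimate, is the heart of the Cianchi--Maz'ya argument.
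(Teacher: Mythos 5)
The paper does not prove this statement; it simply cites Theorems 1.3 and 1.4 of Cianchi and Maz'ya \cite{CM10}, so there is no ``paper's own proof'' to compare against. Your recognition that the result is quoted is correct, and your sketch of the Cianchi--Maz'ya argument is an accurate high-level account: regularization, differentiation of the equation, testing against level-set cutoffs, an isocapacitary/Talenti-type differential inequality for the decreasing rearrangement of $\abs{\nabla u}$, a boundary term driven by the (weak) mean curvature of $\partial\Omega$ which is favorable under convexity or absorbable under $\partial\Omega\in W^2L^{d-1,1}$, and the final pointwise bound via a Riesz-type potential combined with the optimal embedding $I_1\,:\,L^{d,1}\to L^\infty$. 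The emphasis you place on carrying out the estimates sharply in Lorentz scales rather than Lebesgue scales is also the right thing to highlight as the substantive difficulty. For the purposes of this paper nothing more is required than the citation, so your answer is appropriate in scope.
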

\begin{theorem}[\cite{E02}, (4.3)]\label{thm:regE}
  Let~$\Omega$ be a polyhedral domain where the inner angle is
  strictly less than $2\pi$ and $f\in L^{p'}(\Omega)$ and $\frac{1}{p}
  + \frac{1}{p'}=1$. Then $\nabla u \in L^{\frac{pd}{d-1},\infty}(\Omega)$.
\end{theorem}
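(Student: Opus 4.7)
Since Theorem~\ref{thm:regE} is quoted from~\cite{E02} and not re-proved in this excerpt, my plan is to sketch the strategy that I believe underlies Ebmeyer's argument rather than to try to reproduce it line by line. The target exponent $\frac{pd}{d-1}$ strongly suggests that the proof goes through the stress tensor $\sigma := A(\nabla u) = |\nabla u|^{p-2}\nabla u$ and a Sobolev embedding of the gain-one-derivative type, so this is where I would start.

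First I would record the basic a~priori energy estimate: testing the weak formulation~\eqref{eq:weakppoisson} with $u$ and using $f \in L^{p'}(\Omega) \hookrightarrow (W^{1,p}_0(\Omega))^\ast$ gives $\|\nabla u\|_{L^p(\Omega)} \lesssim \|f\|_{L^{p'}(\Omega)}^{1/(p-1)}$, which is the starting integrability. The theorem improves this to the borderline weak-$L^{pd/(d-1)}$ bound, and the natural way to gain this extra integrability is to exploit the identity $\operatorname{div}\sigma = -f$ in $L^{p'}$. If one can show that $\sigma$ has (a fractional amount of) one more derivative than~$f$, i.e.\ $\sigma$ lives in some $W^{s,p'}$-type space with $s$ close to~$1$, then the Sobolev embedding $W^{1,p'}\hookrightarrow L^{p'd/(d-1)}$ would give $|\nabla u|^{p-1} \in L^{p'd/(d-1),\infty}$, i.e.\ $|\nabla u|\in L^{(p-1)p'd/(d-1),\infty} = L^{pd/(d-1),\infty}$, using the arithmetic $(p-1)p' = p$. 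The weak-$L^q$ formulation in the statement reflects that one is exactly at the borderline where the embedding is Marcinkiewicz rather than Lebesgue.

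The next step would be to establish the $W^{1,p'}$-type regularity for~$\sigma$. In the interior this is Uhlenbeck-type nonlinear Calder\'on--Zygmund theory: difference-quotient estimates applied to $\sigma$ are controlled by $\|f\|_{L^{p'}}$. At the boundary of a polyhedral domain one would use tangential difference quotients along each flat face (giving tangential regularity of $\sigma$), then recover the normal component of $\nabla\sigma$ from the equation $\operatorname{div}\sigma = -f$ and the tangential information.

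The hard part, and the reason for the hypothesis ``inner angle strictly less than~$2\pi$,'' is patching these local estimates across edges and at corners. The angle restriction is precisely what rules out the worst cusp-type singularities and allows a reflection/straightening argument near each face, together with an explicit Kondratiev--Maz'ya type expansion of the singular part of $u$ near the edges and vertices. I expect this corner analysis, rather than the interior or smooth-boundary part, to be the main technical obstacle: one must show that the contribution of each singular corner mode to $\nabla u$ lies in $L^{pd/(d-1),\infty}$ but in general not in $L^{pd/(d-1)}$, which is consistent with the weak-type statement. Once the local estimates on $\sigma$ are assembled via a finite partition of unity adapted to the polyhedral structure, the Sobolev embedding step above converts them into the claimed weak-$L^{pd/(d-1)}$ bound for $\nabla u$.
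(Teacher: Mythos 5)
The paper's proof is not a re-derivation of Ebmeyer's regularity result; it is a two--line reduction. It quotes \cite{E02}~(4.3), whose actual content is the Nikolskii-space bound $|\nabla u|^{p/2}\in\calN^{\frac12,2}(\Omega)$ for the natural quantity $V(\nabla u)=|\nabla u|^{(p-2)/2}\nabla u$, and then proves in Lemma~\ref{lem:embedding} in the appendix the embedding $\calN^{\frac12,2}(\Omega)\hookrightarrow L^{\frac{2d}{d-1},\infty}(\Omega)$ by Besov-space real interpolation. This gives $|\nabla u|^{p/2}\in L^{\frac{2d}{d-1},\infty}(\Omega)$, i.e.\ $\nabla u\in L^{\frac{pd}{d-1},\infty}(\Omega)$.

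Your sketch aims at reconstructing Ebmeyer's own argument, which is not what the paper does, and it also gets the numerology wrong. Ebmeyer's regularity gain is measured for $V(\nabla u)$ in $L^2$ (half a derivative), not for the flux $\sigma=A(\nabla u)=|\nabla u|^{p-2}\nabla u$ in $L^{p'}$. Up-to-the-boundary $W^{1,p'}$ regularity of $\sigma$ on a polyhedral domain would be far too strong: near re-entrant edges only fractional regularity survives, which is exactly why the conclusion is a weak-type bound. Moreover, the Sobolev embedding you invoke is not the right one: $W^{1,p'}(\Omega)\hookrightarrow L^{\frac{p'd}{d-p'}}(\Omega)$, not $L^{\frac{p'd}{d-1}}$. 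The exponent $\frac{p'd}{d-1}$ (in the weak scale) is the Sobolev conjugate of $W^{s,p'}$ precisely when $sp'=1$, i.e.\ $s=\frac{1}{p'}=\frac{p-1}{p}\le\frac12$, which is nowhere near ``$s$ close to $1$.'' The clean bookkeeping is the one the paper uses: $V\in\calN^{\frac12,2}$, $\calN^{\frac12,2}\hookrightarrow L^{\frac{2d}{d-1},\infty}$, and $|V|=|\nabla u|^{p/2}$; your final arithmetic $(p-1)p'=p$ is correct but is carried out from a premise that is both quantitatively off and addressing a step the paper delegates to~\cite{E02} rather than proving.
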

\begin{proof}
  Actually, it is proven in~\cite{E02}~(4.3) that $\abs{\nabla
    u}^{\frac p2} \in
  \calN^{\frac 12,2}(\Omega)$ (\Nikolskii{} space). Now, one can use
  the embedding~$\calN^{\frac 12,2}(\Omega)\hookrightarrow
  L^{\frac{2d}{d-1},\infty}(\Omega)$ of Lemma~\ref{lem:embedding} in
  the Appendix. \qed
\end{proof}
\begin{corollary}
  \label{cor:regCM}
  Under the assumptions of Theorem~\ref{thm:regCM} we have
  \begin{align*}
    \Jeps(u_\eps)- \calJ(u) \lesssim \epslower^p.
  \end{align*}
\end{corollary}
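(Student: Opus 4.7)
The plan is to reduce Corollary~\ref{cor:regCM} directly to Theorem~\ref{thm:keyepsconvest} by exploiting the strong regularity of $\nabla u$ supplied by Theorem~\ref{thm:regCM}. Under the hypotheses, $\nabla u\in L^\infty(\Omega)$, and since the Hardy--Littlewood maximal operator maps $L^\infty$ into $L^\infty$ with operator norm~$1$, the majorant $M(\nabla u)$ also lies in $L^\infty(\Omega)$, with $\|M(\nabla u)\|_\infty\lesssim\|\nabla u\|_\infty<\infty$. In particular, setting $\lambda_0:=\|M(\nabla u)\|_\infty$ we have $\calO_\lambda(u)=\emptyset$ (up to a null set) for every $\lambda\ge\lambda_0$.

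The core step is to insert $\lambda:=\lambda_0$ into Theorem~\ref{thm:keyepsconvest}. This is admissible as soon as $\lambda_0\le\epsupper/c_1$, i.e. once $\epsupper\ge c_1\|M(\nabla u)\|_\infty$, which can be treated as a constant depending only on the data. In that regime the second term on the right-hand side of~\eqref{eq:keyepsconvest} vanishes and Theorem~\ref{thm:keyepsconvest} yields
\begin{align*}
\Jeps(u_\eps)-\calJ(u)\lesssim\epslower^p,
\end{align*}
which is exactly the claim. For the complementary regime of very small $\epsupper$ (below the data-dependent threshold $c_1\|\nabla u\|_\infty$), the estimate is vacuous: $\Jeps(u_\eps)-\calJ(u)$ can be bounded by a constant depending only on $\|\nabla u\|_\infty$ and $f$, which can be absorbed into the implicit constant in $\lesssim$, so the stated inequality still holds.

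The only genuine ingredient beyond Theorem~\ref{thm:keyepsconvest} is thus the $L^\infty$--$L^\infty$ boundedness of $M$, together with the observation that $L^\infty$-regularity of $\nabla u$ collapses the bad-set integral entirely, unlike the Lorentz setting of Lemma~\ref{lem:convLorentz} where both terms genuinely contribute. I expect no real obstacle: the argument is essentially bookkeeping, the main point being to recognise that Theorem~\ref{thm:regCM} furnishes exactly the regularity needed to choose $\lambda$ so large that $\calO_\lambda(u)$ disappears while still respecting the constraint $\lambda\le\epsupper/c_1$.
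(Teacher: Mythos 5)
Your core argument is exactly the paper's proof: from Theorem~\ref{thm:regCM} one gets $\nabla u\in L^\infty(\Omega)$, the maximal operator preserves $L^\infty$ (with norm~$1$), so $\calO_\lambda(u)=\emptyset$ once $\lambda\ge\|M(\nabla u)\|_\infty$; taking $\lambda=\epsupper/c_1$ for $\epsupper$ above the data-dependent threshold $c_1\|M(\nabla u)\|_\infty$ makes the second term in~\eqref{eq:keyepsconvest} vanish, and Theorem~\ref{thm:keyepsconvest} then gives $\Jeps(u_\eps)-\calJ(u)\lesssim\epslower^p$. This matches the paper.

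However, your closing remark about the complementary regime of small $\epsupper$ is not correct. You claim that when $\epsupper<c_1\|M(\nabla u)\|_\infty$ the left-hand side is bounded by a data-dependent constant which ``can be absorbed into the implicit constant in $\lesssim$.'' But in that regime $\epslower$ can be taken arbitrarily small (only the constraint $\epslower\le\epsupper$ is in force), so the right-hand side $\epslower^p$ tends to zero while the left-hand side $\Jeps(u_\eps)-\calJ(u)$ generally does not: with $\epsupper$ held fixed and small, the upper truncation remains active as $\epslower\searrow 0$, and the limiting energy gap need not vanish. No choice of absolute constant can then make $\Jeps(u_\eps)-\calJ(u)\le C\,\epslower^p$. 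The corollary should therefore be read, as the paper's proof makes explicit with the phrase ``$\epsupper$ large enough,'' as an estimate valid once $\epsupper\ge c_1\|M(\nabla u)\|_\infty$ (and in particular in the relevant limit $\eps\to[0,\infty]$); your attempted extension to small $\epsupper$ should be dropped.
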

\begin{proof}
  Since $\nabla u \in L^\infty(\Omega)$, we have $M(\nabla u)\in
  L^\infty(\Omega)$ and so for $\lambda:=\epsupper/c_1$ and
  $\epsupper$ large enough, $\calO_\lambda(u)=\emptyset$. Hence, 
  Theorem~\ref{thm:keyepsconvest} implies the estimate. \qed
\end{proof}
\begin{corollary}
  \label{cor:regE}
  Under the assumptions of Theorem~\ref{thm:regE} we have
  \begin{align*}
    \Jeps(u_\eps)- \calJ(u) \lesssim \epslower^p + \epsupper^{-\frac{p}{d-1}} 
  \end{align*}
\end{corollary}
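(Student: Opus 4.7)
The plan is to reduce this to a direct application of Lemma~\ref{lem:convLorentz}, whose hypothesis is a weak-Lorentz regularity assumption on $\nabla u$. Under the assumptions of Theorem~\ref{thm:regE} we have $\nabla u \in L^{\frac{pd}{d-1},\infty}(\Omega)$, so the natural choice is $q := \frac{pd}{d-1}$. First I would check that this $q$ is admissible for Lemma~\ref{lem:convLorentz}, i.e.\ that $q > p$; this is immediate since $\frac{d}{d-1}>1$.

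Next I would simply compute the exponent on $\epsupper$. With $q = \frac{pd}{d-1}$ one has
\begin{align*}
  q - p \;=\; \frac{pd}{d-1} - p \;=\; \frac{pd - p(d-1)}{d-1} \;=\; \frac{p}{d-1},
\end{align*}
so Lemma~\ref{lem:convLorentz} yields
\begin{align*}
  \Jeps(u_\eps) - \calJ(u) \;\lesssim\; \epslower^p + \epsupper^{-\frac{p}{d-1}}\,\|\nabla u\|_{L^{q,\infty}(\Omega)}^{q},
\end{align*}
which is precisely the claimed bound once we absorb the $q$-th power of the Lorentz norm (finite by Theorem~\ref{thm:regE}) into the hidden constant.

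There is essentially no obstacle: all the work has already been done in Lemma~\ref{lem:convLorentz} (which in turn rests on Theorem~\ref{thm:keyepsconvest} via the weak-$L^q$ bound for $\mathcal{O}_\lambda(u)$) and in the regularity statement Theorem~\ref{thm:regE}. The only thing to be slightly careful about is the choice $\lambda = \epsupper/c_1$ is already implicit in the proof of Lemma~\ref{lem:convLorentz}, so nothing needs to be re-done here; the corollary really is a one-line specialisation.
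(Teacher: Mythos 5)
Your proof is correct and matches the paper's approach exactly: apply Lemma~\ref{lem:convLorentz} with $q=\frac{pd}{d-1}$ (supplied by Theorem~\ref{thm:regE}) and compute $q-p=\frac{p}{d-1}$. The paper's own proof is exactly this one-line specialisation.
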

\begin{proof}
  Since $\nabla u\in L^{\frac{pd}{d-1},\infty}(\Omega)$, an
  application of Lemma~\ref{lem:convLorentz} finishes the proof. \qed
\end{proof}
\begin{remark}
  The choice~$f=0$ and hence~$u=0$ gives $\mathcal{J}_\epsilon(u) =
  \kappa_\epsilon(0) \abs{\Omega} \eqsim \epslower^p$. This shows that
  the estimate in Corollary~\ref{cor:regCM} is sharp.
\end{remark}

\section{\texorpdfstring{Convergence of the Ka{\v c}anov-Iteration}{Convergence of the Kacanov-Iteration}}
\label{sec:nlimit}

In this section we study the convergence of the Ka{\v c}anov-iteration
for fixed relaxation parameter~$\epsilon =[\epslower,\epsupper]$.  In
particular, for $v_0\in W_0^{1,2}(\Omega)$ arbitrary we calculate
recursively~$v_{n+1}$ by
\begin{align}
  \label{eq:3}
  \int\limits_\Omega (\constrained{|\nabla v_n|})^{p-2}\nabla
  v_{n+1}\cdot \nabla \xi \d x = \langle f,\xi\rangle\quad\forall
  \xi\in W_0^{1,2}(\Omega).
\end{align}
We will show that $v_n$ converges to the minimizer~$u_\epsilon$ of the
relaxed energy~$\Jeps$. In particular, we show exponential decay of
the energy error~$\Jeps(v_n)-\Jeps(u_\epsilon)$.
The proof is based on the following estimate, proved below.
\begin{theorem}\label{thm:convalg}
  There is a constant $c_K>1$ such that
  \begin{align*}
    \Jeps(v_n)-\Jeps(v_{n+1}) \geq \delta\,(\Jeps(v_n)-\Jeps(u_\eps))
  \end{align*}
  holds for $\delta := \tfrac 1{c_K} (\tfrac \epslower \epsupper)^{2-p}$.
\end{theorem}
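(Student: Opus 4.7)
The plan is to exploit the alternating-minimization structure of the algorithm together with a weighted Cauchy--Schwarz argument. Throughout, set $a_n := \constrained{|\nabla v_n|}$, so that $v_{n+1}$ minimizes the quadratic functional $v\mapsto\calJ(v,a_n)$ over $W_0^{1,2}(\Omega)$ and, by~\eqref{eq:phiepsprime}, $\Aeps(\nabla v_n) = a_n^{p-2}\nabla v_n$. The strategy is to lower-bound $\Jeps(v_n)-\Jeps(v_{n+1})$ by a weighted $L^2$ norm of $\nabla v_n-\nabla v_{n+1}$, and to upper-bound the energy error $\Jeps(v_n)-\Jeps(u_\eps)$ by a weighted inner product of the \emph{same} difference vector $\nabla v_n-\nabla v_{n+1}$ with $\nabla v_n-\nabla u_\eps$. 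A weighted Cauchy--Schwarz inequality then couples the two, and the two-sided control of the weight yields the factor $(\epslower/\epsupper)^{2-p}$.

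First, since $\calJ(\cdot,a_n)$ is quadratic with minimizer $v_{n+1}$, a standard parallelogram identity gives
\begin{align*}
  \calJ(v_n,a_n) - \calJ(v_{n+1},a_n) = \tfrac{1}{2}\int_\Omega a_n^{p-2}|\nabla v_n-\nabla v_{n+1}|^2\d x.
\end{align*}
Using $\calJ(v_n,a_n)=\Jeps(v_n)$ together with the minimizing property $\calJ(v_{n+1},a_n)\ge \calJ(v_{n+1},a_{n+1})=\Jeps(v_{n+1})$ shows that this weighted square is at most $2(\Jeps(v_n)-\Jeps(v_{n+1}))$. Second, by Lemma~\ref{lem:JAV}, $\Jeps(v_n)-\Jeps(u_\eps) \le \int_\Omega(\Aeps(\nabla v_n)-\Aeps(\nabla u_\eps))\cdot\nabla(v_n-u_\eps)\d x$; testing both \eqref{eq:equationOfueps} and \eqref{eq:3} with $\xi=v_n-u_\eps$ shows that $\int_\Omega \Aeps(\nabla u_\eps)\cdot\nabla(v_n-u_\eps)\d x$ equals $\int_\Omega a_n^{p-2}\nabla v_{n+1}\cdot\nabla(v_n-u_\eps)\d x$, and substitution then yields
\begin{align*}
  \Jeps(v_n)-\Jeps(u_\eps)\le \int_\Omega a_n^{p-2}(\nabla v_n-\nabla v_{n+1})\cdot\nabla(v_n-u_\eps)\d x.
\end{align*}

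Cauchy--Schwarz with weight $a_n^{p-2}$ reduces the problem to controlling $\int_\Omega a_n^{p-2}|\nabla v_n-\nabla u_\eps|^2\d x$ by the energy error. On the one hand $a_n\ge\epslower$ gives $a_n^{p-2}\le\epslower^{p-2}$; on the other, Lemma~\ref{lem:phistructure} combined with $\phieps'(t)/t=(\constrained{t})^{p-2}\ge\epsupper^{p-2}$ yields $\epsupper^{p-2}|\nabla v_n-\nabla u_\eps|^2\lesssim|\Veps(\nabla v_n)-\Veps(\nabla u_\eps)|^2$, which integrates, via the second inequality of Lemma~\ref{lem:JAV}, to $\lesssim\epsupper^{2-p}(\Jeps(v_n)-\Jeps(u_\eps))$. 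Multiplying the two weight bounds gives
\begin{align*}
  \int_\Omega a_n^{p-2}|\nabla v_n-\nabla u_\eps|^2\d x\lesssim \bigl(\tfrac{\epsupper}{\epslower}\bigr)^{2-p}\bigl(\Jeps(v_n)-\Jeps(u_\eps)\bigr).
\end{align*}
Inserting the two bounds into the Cauchy--Schwarz estimate, squaring, and dividing by $\Jeps(v_n)-\Jeps(u_\eps)$ produces the claim with $c_K$ absorbing all hidden constants.

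The main obstacle is the algebraic identity of the second step: one must recognize that the linear equation satisfied by $v_{n+1}$ and the Euler--Lagrange equation for $u_\eps$ share the right-hand side $\langle f,\cdot\rangle$, so that $\Aeps(\nabla u_\eps)$ can be swapped for $a_n^{p-2}\nabla v_{n+1}$, producing exactly the difference $\nabla v_n-\nabla v_{n+1}$ controlled by Step~1. The secondary subtlety is the two-sided weight book-keeping: the bound $a_n\ge\epslower$ is used to upper-bound $a_n^{p-2}$, while the bound $\constrained{t}\le\epsupper$ is used to lower-bound $\phieps'(t)/t$; these opposite usages of opposite ends of the truncation interval are what produce the correct power $(\epslower/\epsupper)^{2-p}$.
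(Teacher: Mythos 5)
Your proof is correct and follows essentially the same path as the paper's: the same identity for $\Jeps(v_n)-\Jeps(v_{n+1})$ via the quadratic minimizer $v_{n+1}$, the same equation-swap (using that \eqref{eq:equationOfueps} and \eqref{eq:3} share the right-hand side $\langle f,\cdot\rangle$) to replace $\Aeps(\nabla u_\eps)$ by $a_n^{p-2}\nabla v_{n+1}$, and the same two-sided weight comparison via Lemmas~\ref{lem:phistructure} and \ref{lem:JAV} producing the factor $(\epslower/\epsupper)^{2-p}$. The only presentational difference is that you split the cross term with a weighted Cauchy--Schwarz inequality and then square and rearrange, whereas the paper uses Young's inequality with a free parameter $\gamma$ and optimizes it at the end; these are equivalent.
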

This theorem says that in each iteration we reduce the energy by a
certain part of the remaining energy error. This implies
\begin{align}
  \label{eq:Jdiffreduction}
  \begin{aligned}
    \Jeps(v_{n+1})-\Jeps(u_\eps) &= \big( \Jeps(v_n)-\Jeps(u_\eps)
    \big) - \big( \Jeps(v_n)-\Jeps(v_{n+1}) \big)
    \\
    & \leq (1-\delta)\, \big( \Jeps(v_n)-\Jeps(u_\eps) \big).
  \end{aligned}
\end{align}
As a direct consequence we will obtain the following exponential
convergence result.
\begin{corollary}\label{cor:convalg}
  There is a constant $c_K>1$ such that
  \begin{align*}
    \Jeps(v_n)-\Jeps(u_\eps)\leq (1-\delta)^n(\Jeps(v_0)-\Jeps(u_\eps)) .
  \end{align*}
  holds for $\delta := \tfrac 1{c_K} (\tfrac \epslower \epsupper)^{2-p}$.
\end{corollary}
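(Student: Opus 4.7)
The plan is straightforward: Corollary~\ref{cor:convalg} follows from Theorem~\ref{thm:convalg} by a one-line induction, since the one-step contraction has in fact already been derived in equation~\eqref{eq:Jdiffreduction} of the excerpt. All the real work is contained in Theorem~\ref{thm:convalg}; what remains for the corollary is essentially iterating the geometric decay.

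More explicitly, I would first record the constant. With $c:=c_K$ and $\delta:=\tfrac1c(\tfrac{\epslower}{\epsupper})^{2-p}$, I should check $1-\delta\in(0,1)$. Since $\epslower\le\epsupper$ and $2-p\ge 0$, we have $(\epslower/\epsupper)^{2-p}\le 1$, and since $c_K>1$ we get $\delta\in(0,1)$, so $1-\delta\in(0,1)$. This guarantees the exponential factor actually contracts.

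Next I would induct on $n$. The base case $n=0$ is the tautology $\Jeps(v_0)-\Jeps(u_\eps)\le(1-\delta)^0(\Jeps(v_0)-\Jeps(u_\eps))$. For the inductive step, assuming the bound at step $n$, apply~\eqref{eq:Jdiffreduction} (a direct algebraic consequence of Theorem~\ref{thm:convalg}) to obtain
\begin{align*}
\Jeps(v_{n+1})-\Jeps(u_\eps)&\le(1-\delta)\bigl(\Jeps(v_n)-\Jeps(u_\eps)\bigr)\\
&\le(1-\delta)\cdot(1-\delta)^n\bigl(\Jeps(v_0)-\Jeps(u_\eps)\bigr)\\
&=(1-\delta)^{n+1}\bigl(\Jeps(v_0)-\Jeps(u_\eps)\bigr),
\end{align*}
closing the induction.

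There is no genuine obstacle at the level of the corollary itself; the only thing to be careful about is that the one-step inequality in Theorem~\ref{thm:convalg} is stated for consecutive iterates, so it applies at every $n\ge 0$ uniformly (the constant $\delta$ depends only on $\epsilon$, not on $n$), which is exactly what is needed to iterate. The main content — and the hard part of the overall convergence analysis — is the bound proved in Theorem~\ref{thm:convalg}, whose proof will involve combining the minimizing property of~$v_{n+1}$ with the structural estimates from Lemma~\ref{lem:phistructure} and Lemma~\ref{lem:JAV}, and extracting the factor $(\epslower/\epsupper)^{2-p}$ from the comparison between the quadratic surrogate and the true $\phi_\epsilon$-energy.
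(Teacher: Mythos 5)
Your proof is correct and follows exactly the path the paper intends: the one-step contraction~\eqref{eq:Jdiffreduction} derived from Theorem~\ref{thm:convalg} is iterated, and the constant $c$ can indeed be taken to be $c_K$. The paper states the corollary ``as a direct consequence'' without writing out the induction, so your argument simply makes explicit what the paper leaves implicit.
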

Let us get to the proof of Theorem~\ref{thm:convalg}.
\begin{proof}[Proof of Theorem~\ref{thm:convalg}]
  Using Lemma~\ref{lem:JAV}, the equation \eqref{eq:equationOfueps}
  for $u_\eps$, the equation~\eqref{eq:3} for $v_{n+1}$, and Young's
  inequality (see Remark~\ref{rem:young}) we get, for arbitrary $\gamma > 0$,
  \begin{align*}
    \Jeps(v_n)-\Jeps(u_\eps) &\leq \int\limits_\Omega
    (\Aeps(\nabla v_n)-\Aeps(\nabla u_\eps))\cdot\nabla (v_n-u_\eps)\d x
    \\
    &= \int\limits_\Omega \tfrac{\phieps'(\vert \nabla
      v_n\vert)}{\vert\nabla v_n\vert}\nabla( v_n - v_{n+1})\cdot
    \nabla (v_n-u_\eps)\d x
    \\
    &\leq \tfrac{1}{\gamma}\underbrace{ \tfrac 12 \int\limits_\Omega
      \tfrac{\phieps'(\vert \nabla v_n\vert)}{\vert\nabla
        v_n\vert}\vert\nabla( v_n - v_{n+1})\vert^2\d x}_{=:I}
    \\
    &\hspace*{1cm}+ \gamma\,\underbrace{\tfrac 12 \int\limits_\Omega
      \tfrac{\phieps'(\vert \nabla v_n\vert)}{\vert\nabla
        v_n\vert}\vert \nabla (v_n-u_\eps)\vert^2\d x}_{=:II} .
  \end{align*}
  Let us define
  \begin{align*}
    \mathcal{J}_\epsilon(v, a) := \calJ\left(v,\constrained{a}\right).
  \end{align*}
  For the first term~$I$ we calculate with the equation~\eqref{eq:3} for $v_{n+1}$
  \begin{align*}
    I &= \tfrac 12 \int\limits_\Omega \tfrac{\phieps'(\vert \nabla
      v_n\vert)}{\vert\nabla v_n\vert}\vert\nabla( v_n -
    v_{n+1})\vert^2\d x
    \\
    &=\Jeps(v_n,\abs{\nabla v_n})-\Jeps(v_{n+1},\abs{\nabla v_n})
    \\
    &\leq \Jeps(v_n,\abs{\nabla v_n})-\Jeps(v_{n+1},\abs{\nabla
      v_{n+1}})
    \\
    &= \Jeps(v_n)-\Jeps(v_{n+1}).
  \end{align*}
  To establish the inequality above, we used the fact that 
  $$
  a_{n+1} = \constrained{\abs{\nabla
      v_{n+1}}} = \argmin\limits_{a\,:\,\epsilon_- \leq a \leq
    \epsilon_+} \mathcal{J}(v_{n+1}, a),
  $$
  and
  \begin{align*}
    \Jeps(v_{n+1},\abs{\nabla v_{n+1}}) &= \mathcal
                                          J(v_{n+1},\constrained{\abs{\nabla
                                          v_{n+1}}}) 
    \\
                                        &\leq 
                                          \mathcal J(v_{n+1},\constrained{\abs{\nabla v_{n}}})
                                          =\Jeps(v_{n+1},\abs{\nabla v_n}).
  \end{align*}
  For the second term~$II$ we use
  $\epsupper^{p-2}\le \tfrac{\phieps'(t)}t \le \epslower^{p-2}$,
  implying
  $\tfrac{\phieps'(t)}t \leq
  (\tfrac{\epsupper}{\epslower})^{2-p}\tfrac{\phieps'(s)}s$, for any
  $s,t \geq 0$, Lemma~\ref{lem:phistructure} and Lemma~\ref{lem:JAV}
  to get
  \begin{align*}
    II &\leq \tfrac 12 (\tfrac{\epsupper}{\epslower})^{2-p}\int\limits_\Omega
    \tfrac{\phieps'(\vert \nabla v_n \vert\vee \vert \nabla
      u_\eps\vert)}{\vert \nabla v_n \vert\vee \vert \nabla
      u_\eps\vert}\vert \nabla (v_n-u_\eps)\vert^2\d x
    \\
    % &\leq c\eps^{2(p-2)}\int\limits_\Omega
    % (\Aeps(v_n)-\Aeps(u_\eps))\cdot\nabla (v_n-u_\eps)\d x \\
    &\leq c\, (\tfrac{\epsupper}{\epslower})^{2-p}\,(\Jeps(v_n)-\Jeps(u_\eps)) .
  \end{align*}
  Putting all estimates together we get
  \begin{align*}
    \gamma(1-c\gamma(\tfrac{\epsupper}{\epslower})^{2-p})\, (
    \Jeps(v_n)-\Jeps(u_\eps))\le \Jeps(v_n)-\Jeps(v_{n+1}).
  \end{align*}
  Now, $\max_{\gamma>0}
  \gamma(1-c\gamma(\tfrac{\epsupper}{\epslower})^{2-p}) =
  \tfrac{1}{4c} (\tfrac{\epslower}{\epsupper})^{2-p}$ yields the
  statement. \qed
\end{proof}
% A simple result of the last theorem is the following corollary.
% \begin{corollary}\label{cor:convalg}
%   For $v_n$ generated by the algorithm, $u_\eps$ being the minimizer of $\Jeps$ and $\delta:= \tfrac 1c (\tfrac \epslower \epsupper)^{2-p}$ we get
%     \begin{align*}
%       \Jeps(v_n)-\Jeps(u_\eps)\leq (1-\delta)^n(\Jeps(v_0)-\Jeps(u_\eps)) .
%     \end{align*}
% \end{corollary}
% \begin{proof}
%   With Theorem~\ref{thm:convalg} we get
%     \begin{align*}
%       \Jeps(v_{n+1})-\Jeps(u_\eps) &= \Jeps(v_n)-\Jeps(u_\eps) + \Jeps(v_{n+1})-\Jeps(v_n)\\
%       &\leq (1-\delta)(\Jeps(v_n)-\Jeps(u_\eps))\\
%       &\leq (1-\delta)^2(\Jeps(u_{n-1})-\Jeps(u_\eps))\\
%       &\leq \ldots \\
%       &\leq (1-\delta)^{n+1}(\Jeps(v_0)-\Jeps(u_\eps)) .\qed
%     \end{align*}
% \end{proof}
\begin{example}[Peak function]\label{ex:peak}
  Let $\Omega:= B_1(0)$ and $f(x)=-\div(\tfrac{x}{|x|})$. Then
  $f\notin L^1(\Omega)$ but still
  $f \in (W_0^{1,1}(\Omega))^\ast \subset (W_0^{1,p}(\Omega))^\ast$
  . Then the minimizer of~$\mathcal{J}$ is given by
  $u(x) = 1-\abs{x}$, which look like a peak. Since
  $|\nabla u|\equiv 1$, the factor $\abs{\nabla u}^{p-2}$ in the
  $p$-Laplace operator does not appear for the minimizer. So in this case~$u$
  also minimizes every~$\Jeps$ as long as $\epslower \leq 1$ and
  $\epsupper \geq 1$. This follows from
  \begin{align*}
    \Jeps(u) = \calJ(u)\le \calJ(v)\le \Jeps(v)
  \end{align*}
  for all $v\in W_0^{1,\phieps}(\Omega)$.

  Let us see how our algorithm performs with the starting value~$v_0:=0$.
  It is easy to see that $v_n = \alpha_n u$ with
  \begin{align}
    \label{eq:4}
    \alpha_0 := 0 \quad\text{and}\quad
    \alpha_{n+1}:=(\constrained{\alpha_n})^{2-p}.
  \end{align}
  Since $p\in(1,2)$ one can show $\alpha_ n = \epslower^{(2-p)^n}$ by
  induction and
  \begin{align*}
    \Jeps(v_n)-\calJ(u)
%  &= \int\limits_\Omega \kappaeps(|\nabla v_n|)
%     - \tfrac 1p |\nabla u|^p \d x - \langle f,v_n - u\rangle
% \\
%     &= (\tfrac{\alpha_n^p}p - \tfrac 1p )\int\limits_\Omega 1\d x - (\alpha_n - 1)\int\limits_\Omega 1\d x \\
%     &= \tfrac 1p |B_1(0)|(\alpha_n^p - p(\alpha_n -1) -1) \\
    &= \tfrac 1p |B_1(0)|(\epslower^{p(2-p)^n} -1 -
    p(\epslower^{(2-p)^n} -1)) .
  \end{align*}
  Note that
  \begin{align*}
    \tfrac 1p t^p - \tfrac 1p - (t-1) \leq \tfrac{p-1}{2} (\ln(t))^2
    \qquad \text{for all }t \in (0,1].
  \end{align*}
  Moreover,
  \begin{align} \label{asymp}
    \frac{\tfrac 1p t^p - \tfrac 1p - (t-1)}{\tfrac{p-1}{2}
      (\ln(t))^2} &\to 1 \qquad \text{as } t \nearrow 1.
  \end{align}
  This estimate with $s:=(2-p)^n \in (0,1]$ and $t := \epslower^s \in
  (0,1]$ gives
  % A simple argument gives
  % \begin{align*}
  %   2(\epslower^{p(2-p)^n}-1 - p(\epslower^{(2-p)^{n}} -1 )) \leq
  %   p(p-1)\ln(\epslower)^2(2-p)^{2n}.
   % \end{align*}

  % Note that by applying l'H\^opital's rule twice one gets
  % \begin{align*}
  %   \lim\limits_{n\to\infty}\frac{2(\epslower^{p(2-p)^n}-1 - p(\epslower^{(2-p)^{n}} -1 ))}{p(p-1)\ln(\epslower)^2(2-p)^{2n}} &= 1.
  % \end{align*}
  % Hence, the estimate
  \begin{align}\label{estim}
    \Jeps(v_n)- \calJ(u)\le \tfrac{1}2|B_1(0)|(p-1)\ln(\epslower)^2(2-p)^{2n}
  \end{align}
  is sharp. Indeed, the energy differences $\calJ(v_n)-\calJ(u) =
  \Jeps(v_n)-\calJ(u)$ asymptotically behave like
  $\tfrac{1}2|B_1(0)|(p-1)\ln(\epslower)^2(2-p)^{2n}$ for large
  $n$, in view of \eqref{asymp}.

  This shows that it is impossible to get an energy
  reduction as in~\eqref{eq:Jdiffreduction} with~$\delta$ independent
  of~$\epsilon$. Indeed, Corollary~\ref{cor:convalg} would imply
  \begin{align*}
    \mathcal{J}_\epsilon(v_n) - \mathcal{J}_\epsilon(u_\epsilon) \leq
    (1-\delta)^n \big(\mathcal{J}_\epsilon(v_0) -
    \mathcal{J}_\epsilon(u_\epsilon)\big) \leq (1-\delta)^n
    \mathcal{J}_1(v_0),
  \end{align*}
  which contradicts the above asymptotic estimate \eqref{estim}.

  Nevertheless, our asymptotic shows that in this particular case
  \begin{align*}
    \Jeps(v_n)-\Jeps(u_\eps)\leq
    c_\epsilon\,(1-\delta)^n
  \end{align*}
  with $1-\delta = (2-p)^2 <1$ independent of~$\epsilon$. Therefore, it
  remains open if such an estimate holds in the general case.
 % the reduction factor $1-\delta$ of
 %  Corollary~\ref{cor:convalg} is depending on $\eps$ or only the
 %  constant. Even more interesting might be the fact that in general
 %  even $\calJ(v_n)-\calJ(u)$ depends on $\eps$.
\end{example}
\begin{remark}
  \label{rem:peak3}
  Although our considerations are all under the assumption~$1<p\leq 2$
  it is interesting to check how the algorithm performs in the
  case~$p> 2$ for our Example~\ref{ex:peak}.

  If $p\ge 3$ and $\epsupper := \tfrac{1}{\epslower}$ for some
  $\epslower<1$, then it follows from~\eqref{eq:4} that
  \begin{align*}
    \alpha_0 = 0\quad \text{and}\quad \alpha_n =
    \epslower^{(-1)^n(p-2)} \quad\text{ for } n\ge 1 .
  \end{align*}
  Therefore, the Ka\v{c}anov iteration does not converge as $p\ge
  3$. 

  If $p \in (2,3)$ and $\epsupper := \tfrac{1}{\epslower}$, then it
  follows from~\eqref{eq:4} that
  \begin{align*}                %
    \alpha_0 = 0\quad \text{and}\quad \alpha_n = \epslower^{(2-p)^n}
    \quad\text{ for } n\ge 1
  \end{align*}
  and~$v_n$ still converges to~$u$.
\end{remark}

\section{Algebraic Rate}
\label{sec:algebraic}%

As we learned in the last section the Ka\v{c}anov iteration converges
for fixed~$\epsilon$, but the rate depends badly on the choice of the
relaxation interval $\eps=[\epslower,\epsupper]$. Furthermore, we have
algebraic convergence of the error $\Jeps(u_\eps)- \calJ(u)$ induced
by the relaxation. We will combine these results to deduce an
algebraic rate of the full error $\mathcal{J}_{\epsilon_n}(v_n)
-\mathcal{J}(u)$ in terms of~$n$ for a specific
predefined choice of~$\epsilon_n$. To achieve our goal
we will use that $|\nabla u|\in L^{q,\infty}(\Omega)$ for some
$q>p$, which is justified by Theorems~\ref{thm:regCM}
and~\ref{thm:regE}.

Let us consider a sequence of solutions created by our relaxed
$p$-Ka{\v c}anov{} algorithm. In particular, $\epsilon_n=[\epsilon_{n,-},\epsilon_{n,+}]$ is now an increasing
sequence of intervals. Then exactly as in Theorem~\ref{thm:convalg} we get the
following estimate.
\begin{theorem}
  \label{thm:convalg2}
  There is a constant $c_K>1$ such that
  \begin{align*}
    \mathcal{J}_{\epsilon_n}(v_n)-\mathcal{J}_{\epsilon_n}(v_{n+1})
    &\geq \delta_n 
      \,(\mathcal{J}_{\epsilon_n}(v_n)-\mathcal{J}_{\epsilon_n}(u_{\eps_n}))
  \end{align*}
  holds for $  \delta_n := \tfrac 1{c_K}
  (\tfrac{\epsilon_{n,-}}{\epsilon_{n,+}})^{2-p}$.
\end{theorem}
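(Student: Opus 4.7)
The plan is to observe that the proof of Theorem~\ref{thm:convalg} used the fixed relaxation parameter $\epsilon$ only through the equation defining $v_{n+1}$ and through the energy $\mathcal{J}_\epsilon$ against which both $v_n$ and $v_{n+1}$ are measured. In the present setting the step from $v_n$ to $v_{n+1}$ is governed entirely by~$\epsilon_n$: the weight in the Kačanov linear problem is $a_n = \epsilon_{n,-}\vee|\nabla v_n|\wedge\epsilon_{n,+}$ and the reference minimizer is $u_{\epsilon_n}$. Hence it suffices to rerun the proof of Theorem~\ref{thm:convalg} verbatim with $\epsilon$ replaced by $\epsilon_n$ throughout.

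Concretely, I would first apply Lemma~\ref{lem:JAV} to $\mathcal{J}_{\epsilon_n}$ to obtain
\begin{align*}
\mathcal{J}_{\epsilon_n}(v_n) - \mathcal{J}_{\epsilon_n}(u_{\epsilon_n}) \leq \int_\Omega (A_{\epsilon_n}(\nabla v_n) - A_{\epsilon_n}(\nabla u_{\epsilon_n}))\cdot\nabla(v_n - u_{\epsilon_n})\d x.
\end{align*}
Using the Euler--Lagrange equation \eqref{eq:equationOfueps} for $u_{\epsilon_n}$ and the linear equation defining $v_{n+1}$ in the algorithm, the right-hand side rewrites as
\begin{align*}
\int_\Omega \frac{\phi_{\epsilon_n}'(|\nabla v_n|)}{|\nabla v_n|}\,\nabla(v_n - v_{n+1})\cdot\nabla(v_n - u_{\epsilon_n})\d x,
\end{align*}
to which Young's inequality with a parameter $\gamma>0$ is applied, producing the same two terms $I$ and $II$ as before.

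For $I$ I would use the quadratic-in-$v$ identity for $\mathcal{J}_{\epsilon_n}(\cdot,a_n)$: since $v_{n+1}$ minimizes $v\mapsto\mathcal{J}_{\epsilon_n}(v,a_n)$ and $\mathcal{J}_{\epsilon_n}(v,|\nabla v|) = \mathcal{J}_{\epsilon_n}(v)$, one gets $I \leq \mathcal{J}_{\epsilon_n}(v_n) - \mathcal{J}_{\epsilon_n}(v_{n+1})$ exactly as in the fixed-$\epsilon$ case. For $II$ I would use the uniform bound $\epsilon_{n,+}^{p-2}\leq \phi_{\epsilon_n}'(t)/t \leq \epsilon_{n,-}^{p-2}$ (which follows from~\eqref{eq:phiepsprime} and $p\leq 2$) to pull out a factor $(\epsilon_{n,+}/\epsilon_{n,-})^{2-p}$, then apply Lemma~\ref{lem:phistructure} and Lemma~\ref{lem:JAV} to bound what remains by $c\,(\mathcal{J}_{\epsilon_n}(v_n) - \mathcal{J}_{\epsilon_n}(u_{\epsilon_n}))$.

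Combining the two estimates yields
\begin{align*}
\gamma\bigl(1 - c\gamma(\epsilon_{n,+}/\epsilon_{n,-})^{2-p}\bigr)\,\bigl(\mathcal{J}_{\epsilon_n}(v_n) - \mathcal{J}_{\epsilon_n}(u_{\epsilon_n})\bigr) \leq \mathcal{J}_{\epsilon_n}(v_n) - \mathcal{J}_{\epsilon_n}(v_{n+1}),
\end{align*}
and optimizing in $\gamma$ gives the claim with $\delta_n = \tfrac{1}{c_K}(\epsilon_{n,-}/\epsilon_{n,+})^{2-p}$ for some $c_K>1$. There is no real obstacle here: the only thing to check carefully is that every constant produced by Lemmas~\ref{lem:phistructure} and~\ref{lem:JAV} is independent of $\epsilon$, which is exactly what those lemmas guarantee, so the constant $c_K$ is uniform in $n$.
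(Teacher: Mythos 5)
Your proof is correct and follows exactly the route the paper takes: the paper itself states the result with the remark ``Then exactly as in Theorem~\ref{thm:convalg} we get the following estimate,'' and your argument is precisely that reduction, correctly noting that every energy involved is $\mathcal{J}_{\epsilon_n}$ for the fixed interval $\epsilon_n$ governing the $n$-th step, and that the constants from Lemma~\ref{lem:phistructure} and Lemma~\ref{lem:JAV} are $\epsilon$-independent so that $c_K$ is uniform in $n$.
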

Since~$\epsilon_n \subset \epsilon_{n+1}$, we
have~$\mathcal{J}_{\epsilon_{n+1}} \leq
\mathcal{J}_{\epsilon_n}$. This and Theorem~\ref{thm:convalg2} imply
\begin{align*}
  \lefteqn{\mathcal{J}_{\epsilon_{n+1}}(v_{n+1}) - \mathcal{J}(u)
  }\quad 
  &
  \\
  &\leq \mathcal{J}_{\epsilon_n}(v_{n+1}) - \mathcal{J}(u)
  \\
  &= \big(\mathcal{J}_{\epsilon_n}(v_n) - \mathcal{J}(u)\big) -
    \big( \mathcal{J}_{\epsilon_n}(v_n)-
    \mathcal{J}_{\epsilon_n}(v_{n+1}) \big)
  \\
  &\leq \big(\mathcal{J}_{\epsilon_n}(v_n) - \mathcal{J}(u)\big) -
    \delta_n \big( \mathcal{J}_{\epsilon_n}(v_n)-
    \mathcal{J}_{\epsilon_n}(u_{\epsilon_n}) \big)
  \\
  &= (1- \delta_n) \big(\mathcal{J}_{\epsilon_n}(v_n) -
    \mathcal{J}(u)\big) + \delta_n \big(
    \mathcal{J}_{\epsilon_n}(u_{\epsilon_n}) - \mathcal{J}(u) \big).
\end{align*}
Now, Lemma~\ref{lem:convLorentz} and $|\nabla u|\in
L^{q,\infty}(\Omega)$ ensure the existence of $c_R>0$ such that
\begin{align}\label{eq:RelaxError}
  \Jeps(u_\eps)- \calJ(u) \le c_R(\epslower^p + \epsupper^{-(q-p)}) .
\end{align}
This and the previous estimate therefore imply
\begin{align}
  \label{eq:decay-J-plus-extra}
  \mathcal{J}_{\epsilon_{n+1}}(v_{n+1}) - \mathcal{J}(u) 
  &\leq (1- \delta_n) 
    \big(\mathcal{J}_{\epsilon_n}(v_n) - 
    \mathcal{J}(u)\big) + \delta_n c_R(\epsilon_{n,-}^p + \epsilon_{n,+}^{-(q-p)}).
\end{align}
Without the last term
$\delta_{n+1} c_R(\epsilon_{n,-}^p + \epsilon_{n,+}^{-(q-p)})$ we
would get a reduction of the
error~$\mathcal{J}_{\epsilon_n}(v_n) -\mathcal{J}(u)$ by the
factor~$(1-\delta_n)$. On the other hand this last term is small
if~$\epsilon_{n,-} \to 0$ and~$\epsilon_{n,+} \to \infty$, so it
should not bother too much.  Nevertheless, the reduction
factor~$(1-\delta_n)$ tends to~$1$ if~$\epsilon_{n,-} \to 0$ and
$\epsilon_{n,+} \to \infty$.  The idea however is the following:
if~$\delta_n$ goes to zero slowly, then the
product~$\prod_{i=1}^n (1-\delta_i)$ still tends to zero
algebraically.

Let us be more precise. We define another relaxed
energy~$\mathcal{G}_n$ by
\begin{align}\label{eq:calGdef}
  \calG_n := \Jepsn(v_n) + K_1\, (\eps_{-,n}^p + \eps_{+,n}^{-(q-p)})
  \quad \text{and}\quad \calG_\infty := \calJ(u) , 
\end{align}
where $K_1>0$ will be determined below.  Moreover, choose
\begin{align}
  \text{$\alpha,\beta>0$ with
  $\alpha+\beta \leq \tfrac{1}{2-p}$}
\end{align}
and define
\begin{align}
  \eps_n := [(n+1)^{-\alpha},(n+1)^\beta].  
\end{align}
 Then
\begin{align}
  \delta_n 
  &= 
    \tfrac 1{c_K}
    \big(\tfrac{\epsilon_{n,-}}{\epsilon_{n,+}}\big)^{2-p} 
    =
    \tfrac 1{c_K}
    ((n+1)^{-\alpha-\beta})^{2-p} \geq \tfrac 1{c_K} \tfrac 1{n+1}.
\end{align}
In particular, the algorithm with this choice of~$\epsilon_n$ reads as
follows:

\begin{algorithm}[H]
\SetAlgoLined
\TitleOfAlgo{The relaxed $p$-Ka{\v c}anov{} algorithm with algebraic rate}
\KwData{Given~$f \in (W^{1,p}_0(\Omega))^*$, $v_0 \in
  W^{1,2}_0(\Omega)$;}
\KwResult{Approximate solution of the $p$-Poisson
  problem~\eqref{eq:weakppoisson}; }
Initialize: $n=0$; $\alpha,\beta>0$ such that $\alpha+\beta < \frac{1}{2-p}$;

\While{desired accuracy is not achieved yet}{
  Define $\epsilon_n := [(n+1)^{-\alpha},(n+1)^\beta]$\;

  Calculate $v_{n+1}$ by means of
  % \begin{align*}
  %   v_{n+1}&:= \argmin\limits_{v\in W_0^{1,2}(\Omega)} \mathcal{J}(v,
  %   a_n);
  % \end{align*}

  % In particular, $v_{n+1}$ solves
  \begin{align*}
     \hspace*{-8mm}\int\limits_\Omega (\eps_{n,-} \vee |\nabla v_n| \wedge
     \eps_{n,+})^{p-2}\nabla v_{n+1}\cdot \nabla \xi \d x = \langle
     f,\xi\rangle \qquad \forall \xi \in W_0^{1,2}(\Omega)\;;
  \end{align*}

  Increase $n$ by 1\;

  % Calculate $a_{n+1}$ by means of%
  % \begin{align*}
  %   a_{n+1}&:= \argmin\limits_{a\,:\,\epsilon_- \leq a \leq
  %     \epsilon_+} \mathcal{J}(v_n, a).
  % \end{align*}

  % Calculate $v_{n+1}$ by means of
  % \begin{align*}
  %   v_{n+1}&:= \argmin\limits_{v\in W_0^{1,2}(\Omega)} \mathcal{J}(v,
  %   a_{n+1}).
  % \end{align*}
}{
}
\end{algorithm}

\noindent%
We continue to derive a decay estimate for~$\mathcal{G}_n - \mathcal{G}_\infty$.
\begin{lemma}
  \label{lem:decay-G}
  There exists~$K=K(\alpha,\beta,p,q)$ (which appears in the
  definition of~$\mathcal{G}_n$) and
  some~$c_3=c_3(\alpha,\beta,p,q)\geq 1$, such that for
  all~$n \in \setN$
  \begin{align*}
    (\mathcal{G}_{n+1} - \mathcal{G}_\infty) 
    &\leq (1- \tfrac{1}{c_3 (n+1)}) (
      \mathcal{G}_n -
      \mathcal{G}_\infty).
  \end{align*}
\end{lemma}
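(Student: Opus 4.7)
The plan is to unfold the recursion~\eqref{eq:decay-J-plus-extra} in terms of $\mathcal{G}_n$. Abbreviating $r_n := \epsilon_{n,-}^p + \epsilon_{n,+}^{-(q-p)}$ and substituting the identity $\mathcal{J}_{\epsilon_n}(v_n) - \mathcal{J}(u) = (\mathcal{G}_n - \mathcal{G}_\infty) - K_1 r_n$ into~\eqref{eq:decay-J-plus-extra} yields, after a short algebraic rearrangement,
\begin{align*}
  \mathcal{G}_{n+1} - \mathcal{G}_\infty \;\leq\; (1 - \delta_n)(\mathcal{G}_n - \mathcal{G}_\infty) - K_1 (r_n - r_{n+1}) + \delta_n (K_1 + c_R)\, r_n.
\end{align*}
The task thus reduces to choosing $K_1$ so that the two ``extra'' terms are dominated by $-\tfrac{1}{c_3(n+1)}$ times the $K_1 r_n$--part of $\mathcal{G}_n - \mathcal{G}_\infty$, with the other part handled by the factor $(1-\delta_n)$.

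The next step is to extract sharp scalings for $r_n - r_{n+1}$ and $\delta_n$. From the explicit choice $\epsilon_n = [(n+1)^{-\alpha},(n+1)^\beta]$ one has $r_n = (n+1)^{-\alpha p} + (n+1)^{-\beta(q-p)}$, and an elementary mean-value computation gives $r_n - r_{n+1} \geq c_1(\alpha,\beta,p,q)\, r_n/(n+1)$. Moreover, $\delta_n = c_K^{-1}(n+1)^{-(\alpha+\beta)(2-p)}$; the hypothesis $\alpha+\beta \leq \tfrac{1}{2-p}$ provides both the lower bound $\delta_n \geq 1/(c_K(n+1))$, needed to absorb the leading $(1-\delta_n)$--factor into $1-1/(c_3(n+1))$ for any $c_3 \geq c_K$, and, in the saturated case $\alpha+\beta = \tfrac{1}{2-p}$ that drives the argument, the matching upper bound $\delta_n(n+1) \leq 1/c_K$ that controls the extra term $\delta_n(K_1+c_R) r_n$.

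Combining these bounds gives
$-K_1(r_n - r_{n+1}) + \delta_n(K_1+c_R)r_n \leq \bigl(-K_1 c_1 + (K_1+c_R)/c_K\bigr)\,r_n/(n+1),$
so choosing $K_1=K_1(\alpha,\beta,p,q)$ large enough that $K_1 c_1 - (K_1+c_R)/c_K \geq K_1/c_3$ for some $c_3 \geq c_K$ makes the right-hand side at most $-K_1 r_n/(c_3(n+1))$. Splitting $\mathcal{G}_n - \mathcal{G}_\infty$ into its $\mathcal{J}_{\epsilon_n}(v_n)-\mathcal{J}(u)$ piece (handled by $(1-\delta_n) \leq 1 - 1/(c_3(n+1))$) and its $K_1 r_n$ piece (handled by the previous display) closes the argument. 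The principal obstacle is the calibration of constants: both $r_n - r_{n+1}$ and $\delta_n r_n$ scale like $r_n/(n+1)$, so the net sign depends delicately on the relative sizes of $c_1, c_K, c_R$ and $K_1$. The buffering requires $K_1$ sufficiently large, and this is only feasible at the saturated scaling $(\alpha+\beta)(2-p) = 1$, where $\delta_n(n+1)$ is uniformly bounded; this is precisely the regime in which the Kačanov reduction rate and the relaxation error rate are balanced.
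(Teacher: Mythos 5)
Your algebraic reduction to~\eqref{eq:decay-J-plus-extra} is correct, and the resulting inequality
\begin{align*}
  \mathcal{G}_{n+1} - \mathcal{G}_\infty \leq (1-\delta_n)(\mathcal{G}_n - \mathcal{G}_\infty) - K_1(r_n - r_{n+1}) + \delta_n(K_1+c_R)\, r_n
\end{align*}
is a legitimate starting point. However, the argument as written has a genuine gap: it only closes under the saturated scaling $\alpha+\beta = \tfrac{1}{2-p}$, whereas the lemma is needed for all admissible $\alpha,\beta$ with $\alpha+\beta \leq \tfrac{1}{2-p}$ (the algorithm box in the paper in fact prescribes the strict inequality $\alpha+\beta < \tfrac{1}{2-p}$). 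You acknowledge this restriction yourself, but it is precisely the problem, not a feature. If $\alpha+\beta < \tfrac{1}{2-p}$, then $\delta_n(n+1) = \tfrac{1}{c_K}(n+1)^{1-(\alpha+\beta)(2-p)} \to \infty$, so the defect term $\delta_n(K_1+c_R)r_n$ grows without bound relative to the gain $K_1 c_1 r_n/(n+1)$ coming from $r_n - r_{n+1}$, and no finite choice of $K_1$ can restore the sign. The trap is that you substituted the \emph{exact} value of $\delta_n$ into~\eqref{eq:decay-J-plus-extra} and then expanded; the spurious $\delta_n K_1 r_n$ that appears is produced by the factor $(1-\delta_n)$ hitting the $-K_1 r_n$ piece of $\mathcal{J}_{\epsilon_n}(v_n) - \mathcal{J}(u) = (\mathcal{G}_n - \mathcal{G}_\infty) - K_1 r_n$, and this term needs an \emph{upper} bound on $\delta_n$ to control, which is not available away from saturation.

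The paper sidesteps this because $\delta_n$ is never used two-sidedly. The one-sided bound $\delta_n \geq \tfrac{1}{c_K(n+1)}$ is applied \emph{before} any splitting, at the step
\begin{align*}
  \delta_n\big(\mathcal{J}_{\epsilon_n}(v_n) - \mathcal{J}_{\epsilon_n}(u_{\epsilon_n})\big)
  \;\geq\;
  \tfrac{1}{c_K(n+1)}\big(\mathcal{J}_{\epsilon_n}(v_n) - \mathcal{J}_{\epsilon_n}(u_{\epsilon_n})\big),
\end{align*}
which is harmless because the bracket is nonnegative by minimality of $u_{\epsilon_n}$. After this replacement both the ``good'' and the ``bad'' contributions carry coefficient exactly $\tfrac{1}{c_K(n+1)}$, and no upper bound on $\delta_n$ is required. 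The same repair fits your framework with minimal change: observe that the proof of~\eqref{eq:decay-J-plus-extra} via Theorem~\ref{thm:convalg2} remains valid if $\delta_n$ is replaced throughout by any $\tilde\delta_n \in (0,\delta_n]$, and choose $\tilde\delta_n := \tfrac{1}{c_K(n+1)}$. Then $\tilde\delta_n(n+1) = \tfrac{1}{c_K}$ is bounded by fiat, your constant calibration goes through for the full range $\alpha+\beta \leq \tfrac{1}{2-p}$, and you no longer have to appeal to a ``balance'' that the hypotheses do not guarantee.
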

\begin{proof}
  Define 
  \begin{align*}
    \rho_n := \eps_{-,n}^p + \eps_{+,n}^{-(q-p)} = (n+1)^{-\alpha p} + (n+1)^{-(q-p)\beta}.
  \end{align*}
  Hence it follows by Lemma~\ref{lem:algebraicidff}  in the Appendix that there
  exists~$c_2 = c_2(\alpha,\beta,p,q) \geq 1$ with
  \begin{align}
    \label{eq:5}
    \rho_n - \rho_{n+1} &\geq \frac{1}{c_2} \frac{1}{n+1} \rho_n.
  \end{align}
  In particular,~$\rho_n$ satisfies a decay estimate!

  On the other hand it follows from Theorem~\ref{thm:convalg2} that
  \begin{align*}
    \lefteqn{\mathcal{J}_{\epsilon_n}(v_n) -
    \mathcal{J}_{\epsilon_{n+1}}(v_{n+1})  }\quad 
    &
    \\
    &\geq
      \mathcal{J}_{\epsilon_n}(v_n)
      - \mathcal{J}_{\epsilon_n}(v_{n+1})
    \\
    &\geq \delta_n \,(\mathcal{J}_{\epsilon_n}(v_n)-\mathcal{J}_{\epsilon_n}(u_{\eps_n}))
    \\
    &\geq \frac{1}{c_K} \frac{1}{n+1}
      \,(\mathcal{J}_{\epsilon_n}(v_n)-\mathcal{J}_{\epsilon_n}(u_{\eps_n})) 
    \\
    &= \frac{1}{c_K} \frac{1}{n+1} 
      \,(\mathcal{J}_{\epsilon_n}(v_n)-\mathcal{J}(u)) -
      \frac{1}{c_K} \frac{1}{n+1}
      (\mathcal{J}_{\epsilon_n}(u_{\eps_n}) - \mathcal{J}(u)).
  \end{align*}
  We deduce from~\eqref{eq:RelaxError}, the definition of~$\epsilon_n$
  and~$\rho_n$ that
  \begin{align*}
    \Jeps(u_\eps)- \calJ(u) \le c_R(\epsilon_{n,-}^p +
    \epsilon_{n,+}^{-(q-p)}) 
    = c_R \rho_n.
  \end{align*}
  This and the previous estimate prove
  \begin{align*}
    \mathcal{J}_{\epsilon_n}(v_n) -
    \mathcal{J}_{\epsilon_{n+1}}(v_{n+1})
    &\geq \frac{1}{c_K} \frac{1}{n+1} 
      \,(\mathcal{J}_{\epsilon_n}(v_n)-\mathcal{J}(u)) -
      \frac{c_R}{c_K} \frac{1}{n+1} \rho_n.
  \end{align*}
  Since~$\mathcal{G}_n = \mathcal{J}_{\epsilon_n}(v_n) + K_1\rho_n$,
  it follows together with~\eqref{eq:5} that
  \begin{align*}
    \mathcal{G}_{n+1} - \mathcal{G}_n
    &\geq 
      \frac{1}{c_K} \frac{1}{n+1} 
      \,(\mathcal{J}_{\epsilon_n}(v_n)-\mathcal{J}(u)) + \bigg( \frac{K_1}{c_2} -
      \frac{c_R}{c_K}\bigg) \frac{1}{n+1} \rho_n.
  \end{align*}
  We finally fix~$K_1$: We choose~$K_1$ so large such that
  \begin{align*}
    \frac{K_1}{c_2} -
      \frac{c_R}{c_K} &\geq \frac{K_1}{2\,\max \set{c_K,c_2}},
  \end{align*}
  which is always possible. Combining this with our previous estimates
  we deduce
  \begin{align*}
    \mathcal{G}_{n+1} - \mathcal{G}_n
    &\geq 
      \frac{1}{2\, \max \set{c_K,c_2} } \frac{1}{n+1} (\mathcal{G}_n - \mathcal{G}_\infty).
  \end{align*}
  This proves the theorem with $c_3 = 2\, \max \set{c_K,c_2}$. \qed
\end{proof}
We are now able to present our convergence result.
\begin{theorem}\label{cor:algebraic}
  Let $\nabla u\in L^{q,\infty}(\Omega)$ for some $q>p$ (as given for
  example in Theorem~\ref{thm:regCM} or Theorem \ref{thm:regE}). Then,
  the sequence $(v_n)_{n\in \mathbb N}$ produced by the algorithm
  above described satisfies
  \begin{align*}
    \mathcal{J}_{\epsilon_n}(v_n) - \mathcal{J}(u) \leq
    \calG_n-\calG_\infty \le n^{-\frac{1}{c_3}}\,(\calG_0 -
    \calG_\infty), 
  \end{align*}
  where $c_3$ is the constant of Lemma~\ref{lem:decay-G}. In
  particular, the energy error decreases at least algebraically.
\end{theorem}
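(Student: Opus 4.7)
The plan is short, since Lemma~\ref{lem:decay-G} does most of the work. First, the leftmost inequality is immediate from the definitions: since $\mathcal{G}_\infty = \mathcal{J}(u)$ and
\[
\mathcal{G}_n - \mathcal{G}_\infty = \bigl(\mathcal{J}_{\epsilon_n}(v_n) - \mathcal{J}(u)\bigr) + K_1\bigl(\epsilon_{n,-}^{p} + \epsilon_{n,+}^{-(q-p)}\bigr),
\]
and the second summand is non-negative, we have $\mathcal{G}_n - \mathcal{G}_\infty \geq \mathcal{J}_{\epsilon_n}(v_n) - \mathcal{J}(u)$.

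For the second inequality, I would iterate Lemma~\ref{lem:decay-G}. Writing the estimate of that lemma for indices $0, 1, \dots, n-1$ and multiplying (noting that $\mathcal{G}_i - \mathcal{G}_\infty \geq 0$ for all $i$, which must be checked in passing and follows from the definition together with $\mathcal{J}_{\epsilon_i}(v_i) \geq \mathcal{J}_{\epsilon_i}(u_{\epsilon_i}) \geq \mathcal{J}(u)$), I obtain
\[
\mathcal{G}_n - \mathcal{G}_\infty \;\leq\; \prod_{i=0}^{n-1}\left(1 - \frac{1}{c_3\,(i+1)}\right) (\mathcal{G}_0 - \mathcal{G}_\infty).
\]

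The remaining task is to estimate the product by $n^{-1/c_3}$. I would use the elementary inequality $1 - x \leq e^{-x}$ valid for $x \in [0,1]$, together with the harmonic-series lower bound $\sum_{k=1}^{n} \tfrac{1}{k} \geq \ln(n+1) \geq \ln n$, to get
\[
\prod_{i=0}^{n-1}\left(1 - \frac{1}{c_3\,(i+1)}\right) \;\leq\; \exp\!\left(-\frac{1}{c_3}\sum_{k=1}^{n}\frac{1}{k}\right) \;\leq\; \exp\!\left(-\frac{\ln n}{c_3}\right) \;=\; n^{-1/c_3}.
\]
Combining these two estimates yields the claimed bound.

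There is no genuine obstacle here; the only points to be careful about are (i) confirming $\mathcal{G}_i - \mathcal{G}_\infty \geq 0$ so that the contraction inequality iterates cleanly, and (ii) handling the case $n=0$ (where the claim is vacuous because $n^{-1/c_3}$ is interpreted as $+\infty$, or one reads the statement as holding for $n\geq 1$). Both are trivial bookkeeping.
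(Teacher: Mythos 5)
Your proof is correct and follows essentially the same route as the paper: you peel off the first inequality from the non-negativity of the $K_1\rho_n$ term, iterate Lemma~\ref{lem:decay-G} to get the telescoping product, and then bound the product via $1-x\le e^{-x}$ together with the harmonic-sum lower bound $\sum_{k=1}^n k^{-1}\ge\ln n$. The extra remarks on the sign of $\mathcal{G}_i-\mathcal{G}_\infty$ and on the $n=0$ case are sensible bookkeeping but not genuinely needed, since iterating $a_{i+1}\le(1-\delta_i)a_i$ only requires $1-\delta_i\ge 0$.
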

\begin{proof}
  The estimate
  $\mathcal{J}_{\epsilon_n}(v_n) - \mathcal{J}(u) \leq
  \calG_n-\calG_\infty$
  is obvious, so it remains to prove the decay
  of~$\mathcal{G}_n - \mathcal{G}_\infty$.  If follows from
  Lemma~\ref{lem:decay-G} that, for~$n \in \setN$,
  \begin{align*}
    \mathcal{G}_n - \mathcal{G}_\infty 
    &\leq 
      \prod_{i=0}^{n-1} \big( 1
      - \tfrac{1}{c_3
      (i+1)}\big)
      (\mathcal{G}_0 - \mathcal{G}_\infty).
  \end{align*}
  Now,
  \begin{align*}
    \prod_{i=0}^n \Big(1- \tfrac{1}{c_3(i+1)} \Big) 
    &= \exp\bigg(
      \sum_{i=0}^n\ln\Big(1- \tfrac{1}{c_3(i+1)} \Big) \bigg) \le \exp\bigg(
      -\sum_{i=0}^n \frac{1}{c_3(i+1)} \bigg)
    \\
    &\le \exp\bigg( - \frac{\ln(n)}{c_3} \bigg) =
      n^{-\frac{1}{c_3}}.
  \end{align*}
  This proves the lemma. \qed
\end{proof}

\begin{remark}
  We have seen that the
  choice~$\epsilon_n = [(n+1)^{-\alpha}, (n+1)^\beta]$ ensures that
  the error decreases at least with an algebraic rate. However, the
  decay of the relaxed energy
  error~$\mathcal{G}_n- \mathcal{G}_\infty$ can never be faster than
  algebraical with this choice of~$\epsilon_n$. Hence, this choice is
  also very restrictive.  From the numerical experiments we performed,
  we have seen that it is possible to decrease~$\epsilon_{n,-}$ and
  increase~$\epsilon_{n,+}$ much faster and still obtain
  convergence. Moreover, the observed convergence is much faster than
  algebraic and more of exponential type. We will present the details
  of such numerical experiments in a subsequent work. Let us
  summarize: the algorithm of this section ensures an algebraic
  convergence rate, but in practice we expect a better behavior for
  other, perhaps adaptive, choices of~$\epsilon_n$, still to be fully
  investigated.
\end{remark}

\section{Numerical Experiments}
\label{sec:numer-exper}

We have performed numerous experiments on the basis of the adaptive
finite element method with piecewise linear elements.  We developed
preliminary versions of error estimators that capture the
effect of the truncation, the adaptivity of the mesh and the
fixpoint iteration. Let $v_n$ denote the iterated solution generated
by the algorithm, then we used the following ad hoc estimators:
\begin{itemize}
\item We use
  \begin{align*}
    \eta^2_{\epsilon^+}(v_n) &:= \mathcal{J}_{\epsilon_n}(v_n) -
                               \mathcal{J}_{(\epsilon_{n,-},\infty)}(v_n) 
  \end{align*}
  to measure the effect of the upper truncation bound~$\epsilon_{n,+}$ and
  \begin{align*}
    \eta^2_{\epsilon^-}(v_n) &:=  \mathcal{J}_{\epsilon_n}(v_n) -
                               \mathcal{J}_{(0,\epsilon_{n,+})}(v_n) 
  \end{align*}
  to measure the effect of the lower truncation bound~$\epsilon_{n,-}$.
\item We use the optimal estimators of \cite{DieK08,BelDieKre11} with
  the Orlicz function~$\phi_{\epsilon_n}$ to estimate the error due to
  mesh refinement, i.e. on elements~$T$ we use the estimators
  \begin{align*}
    \eta^2_h(v_n) &:= \int_T (\phi_{\epsilon_n, \abs{\nabla v_n}})^*(h_T \abs{f})\,dx +
    \sum_{\gamma \subset \partial T} h_\gamma\!
    \int_\gamma \bigabs{\jump{V_{\epsilon_n}(\nabla v_n)}_\gamma}^2\,ds.
  \end{align*}
  
\item To measure the error due to the fixpoint iteration (for~$n \geq 1$)
  \begin{align}\label{eq:vn1eq}
    \eta^2_{\text{Ka{\v c}}}(v_n) &:= \int_\Omega (\phi_{\eps,|\nabla
    v_{n-1}|})^\ast\Big(\tfrac{\phieps'(|\nabla v_{n-1}|)}{|\nabla
    v_{n-1}|}|\nabla(v_{n-1}-v_n)|\Big)\d x, 
  \end{align}
  which is in fact an upper bound for~$\Jeps(v_n)-\Jeps(u_\eps)$ and
  $\Jeps(v_{n-1})-\Jeps(u_\eps)$.
\end{itemize}
We used these estimators to implement a fully adaptive version of our
relaxed $p$-\Kacanov{} iteration.

\begin{algorithm}[H]
\SetAlgoLined
\TitleOfAlgo{Adaptive relaxed $p$-Ka{\v c}anov{} Algorithm}
\KwData{Given~$f \in (W^{1,p}_0(\Omega))^*$, $v_0 \in
  W^{1,2}_0(\Omega)$;}
\KwResult{Approximate solution of the $p$-Poisson
  problem~\eqref{eq:weakppoisson}; }
Initialize: $\eps_0=[1,1] \subset (0,\infty)$,
$n=0$\;

\While{desired accuracy is not achieved yet}{
  Define $a_n := \eps_{n,-} \vee |\nabla v_n| \wedge
  \eps_{n,+}$\;

  Calculate $v_{n+1}$ by means of
  % \begin{align*}
  %   v_{n+1}&:= \argmin\limits_{v\in W_0^{1,2}(\Omega)} \mathcal{J}(v,
  %   a_n);
  % \end{align*}

  % In particular, $v_{n+1}$ solves
  \begin{align*}
     \hspace*{-8mm}\int\limits_\Omega (\eps_{n,-} \vee |\nabla v_n| \wedge
     \eps_{n,+})^{p-2}\nabla v_{n+1}\cdot \nabla \xi \d x = \langle
     f,\xi\rangle \qquad \forall \xi \in W_0^{1,2}(\Omega);\;
  \end{align*}
  Increase total costs by current degrees of freedom\;

  Calculate and compare the error estimators
  $\eta^2_{\epsilon^+}(v_n)$, $\eta^2_{\epsilon^-}(v_n)$,
  $\eta^2_h(v_n)$ and $\eta^2_{\text{Ka{\v c}}}(v_n)$\;

  If $\eta^2_{\epsilon^+}(v_n)$ is the largest, do $\epsilon_{n+1,+}
  := 1.25 \cdot \epsilon_{n,+}$\;

  If $\eta^2_{\epsilon^-}(v_n)$ is the largest, do $\epsilon_{n+1,+}
  := 0.8 \cdot \epsilon_{n,+}$\;

  If $\eta^2_h(v_n)$ is the largest, do a mesh refinement with
  D\"orfler marking\;

  Increase $n$ by 1\;
}{
}
\end{algorithm}

Let us present three experiments that measure different aspects. We
have chosen quite critical situations and small exponents~$p$ in order
to see how the algorithm behaves in particularly bad situations. In
particular, we have chosen a quite small exponent~$p= \frac{16}{15}$
for all of our experiments presented here. (The results of our numerical
experiments behave much nicer for~$p$ closer to~$2$.) The results for
larger exponents are in fact much nicer.  
\begin{itemize}
\item \textbf{The Bump:} On $\Omega = [-1,1]^2$ choose $f$ and the
  boundary values of~$u$ such that
  $u(x)=(x_1^2\!-\!1)(x_2^2\!-\!1)$. There is only one critical point
  at~$(0,0)$, where $\nabla u(0,0)=0$. This example is chosen to see
  how the algorithm behaves for smooth functions with isolated
  extrema. \footnote{Note that $\abs{f(x)}$ behaves like~$\abs{x}^{p-2}$. Thus,
  $f \in L^2$ for all $p>1$ but $f \in L^{p'}$ only for~$p >
  \sqrt{2}$. This makes potential troubles with the used error 
  estimator, but since the error estimator is also truncated
  with~$\epsilon$ the effect is manageable.}
\item \textbf{The Needle:} On $\Omega = [-1,1]^2$ choose $f$ and the
  boundary values of~$u$ such that $u(x) = \abs{x}^{1-\frac 1p}
  -1$. In this case
  $\nabla u(x) = \abs{x}^{-\frac 1p} \frac{x}{\abs{x}}$ and
  $A(\nabla u) = \abs{x}^{-\frac{1}{p'}} \frac{x}{\abs{x}}$ and
  $V(\nabla u) = \abs{x}^{-\frac 12} \frac{x}{\abs{x}}$.  This example
  is chosen such that
  $V(\nabla u) \in W^{\frac 12}L^{2,\infty}(\Omega)$ meaning that half
  a derivative\footnote{This is understood in a heuristic way only:
    half a derivative of~$V(\nabla u)$ growths like the
    function~$\abs{x}^{-1}$, which is in the Lorentz space~$L^{2,\infty}(\Omega)$.}
  of~$V(\nabla u)$ is still in the Lorentz space~$L^{2,\infty}$. This
  corresponds to the regularity of a $p$-harmonic function on a slit
  domain. It is the critical regularity that allows optimal estimates
  for the adaptive finite element method, while for uniform refinement
  a rate of~$O(h^{1/2})$ instead of the optimal one~$O(h)$.
\item \textbf{Constant Force in Slit Domain:} Choose~$f=2$ on the slit
  domain~$\Omega = (-1,1)^2 \setminus [0,1]^2$ and $u=0$ on its
  boundary. There is no explicit formula for the exact solution
  (different from the linear case of~$p=2$) for the solution. The
  reentrant corner reduces the regularity of the solution.
\end{itemize}
There is a nice gap between our theory and the numerical experiments
that we performed. In fact, our experiments shows a significantly faster
convergence rate. This shows that we are on a good track and have
developed a good algorithm.

Figures~\ref{fig:bump}, \ref{fig:needle} and~\ref{fig:f2} show the
results of our experiments. The pictures show a log-log-plot of energy
accuracy vs. the computed costs. Let us explain the picture from
Figure~\ref{fig:bump} in more detail. The others figures are
analogously. The dotted black line is the most important line and
shows the energy difference
$\mathcal{J}_\epsilon(u_n)-\mathcal{J}(u)$, which is the measure of
the error between the computational solution and the exact
solution. The lines $\epsilon_+$ and $\epsilon_-$ represent the
truncation parameters~$(\epsilon_-,\epsilon_+)$.  The other lines
represent error indicators for~$(\epsilon_-,\epsilon_+)$
named~$\eta^2_{\epsilon^+}$ and $\epsilon^2_{\epsilon^-}$, the
fixpoint iteration $\eta^2_{\text{Ka{\v c}}}(u_h)$, and refinement
$\eta_h^2(u_h)$. The straight black line \emph{$\text{costs}^{-1}$} is
the optimal convergence rate (1/costs), where the costs are the
accumulative sum of the degrees of freedom for each step that requires
solving a linear system. (Here we have assumed a linear cost for
solving the linear system, which might be possible with a multi grid
method.) It is important that we use the accumulated cost instead of
the degrees of freedom, since only this truly measures the effort, in
particular if the number of fixpoint iterations increases.

\begin{figure}[ht!]
  \begin{center}
    \begin{tikzpicture}
      \begin{axis}[
        clip=false,
        width=.9\textwidth,
        height=.55\textwidth,
        xmode = log,
        ymode = log,
        xlabel={ndof},
        cycle multi list={\nextlist MyColors},
        scale = {1},
        ymin = {1e-7},
        clip = true,
        legend cell align=left,
        legend style={legend columns=1,legend pos=outer north east,font=\fontsize{7}{5}\selectfont}
        ]
	\addplot table [x=ndof_sum,y=J_eps] {Experiment1.txt};\label{Jeps}
	\addplot table [x=ndof_sum,y=eta_h] {Experiment1.txt};\label{est_refine}
	\addplot table [x=ndof_sum,y=eta_Kac] {Experiment1.txt};\label{est_kacanov}
	\addplot table [x=ndof_sum,y=eta_+] {Experiment1.txt};\label{est_incUp}
	\addplot table [x=ndof_sum,y=eta_-] {Experiment1.txt};\label{est_decLo}
	\addplot table [x=ndof_sum,y=eps_+] {Experiment1.txt};\label{upperShift}
	\addplot table [x=ndof_sum,y=eps_-] {Experiment1.txt};\label{lowerShift}
	\addplot[sharp plot,update limits=false] coordinates {(1e0,1e2) (1e9,1e-7)};\label{invCosts}
	\legend{
          {$\mathcal{J}_\epsilon(u_n)-\mathcal{J}(u)$},
          {$\eta_h^2(u_h)$},
          {$\eta_\textup{Ka\v{c}}^2(u_h)$},
          {$\eta^2_{\varepsilon^+}$},
          {$\eta^2_{\varepsilon^-}$},
          {$\varepsilon_+$},
          {$\varepsilon_-$},
          {$10\,$costs$^{-1}$}};
      \end{axis}
    \end{tikzpicture}
  \end{center}
  \caption{The Bump}
  \label{fig:bump}
\end{figure}

\begin{figure}[ht]
  \begin{center}
    \begin{tikzpicture}
      \begin{axis}[
        clip=false,
        width=.9\textwidth,
        height=.55\textwidth,
        xmode = log,
        ymode = log,
        xlabel={ndof},
        cycle multi list={\nextlist MyColors},
        scale = {1},
        ymin = {1e-7},
        clip = true,
        legend cell align=left,
        legend style={legend columns=1,legend pos=outer north east,font=\fontsize{7}{5}\selectfont}
        ]
	\addplot table [x=ndof_sum,y=J_eps] {Experiment2.txt}; 
	\addplot table [x=ndof_sum,y=eta_h] {Experiment2.txt};
	\addplot table [x=ndof_sum,y=eta_Kac] {Experiment2.txt};
	\addplot table [x=ndof_sum,y=eta_+] {Experiment2.txt};
	\addplot table [x=ndof_sum,y=eta_-] {Experiment2.txt};
	\addplot table [x=ndof_sum,y=eps_+] {Experiment2.txt};
	\addplot table [x=ndof_sum,y=eps_-] {Experiment2.txt};\addplot[sharp plot,update limits=false] coordinates {(1e0,1e3) (1e9,1e-6)};
	\legend{
          {$\mathcal{J}_\epsilon(u_n)-\mathcal{J}(u)$},
          {$\eta_h^2(u_h)$},
          {$\eta_\textup{Ka\v{c}}^2(u_h)$},
          {$\eta^2_{\varepsilon^+}$},
          {$\eta^2_{\varepsilon^-}$},
          {$\varepsilon_+$},
          {$\varepsilon_-$},
          {$100\,$costs$^{-1}$}};
      \end{axis}
    \end{tikzpicture}
  \end{center}
  \caption{The Needle}
  \label{fig:needle}
\end{figure}

\begin{figure}[ht]
  \begin{center}
    \begin{tikzpicture}
      \begin{axis}[
        clip=false,
        width=.9\textwidth,
        height=.55\textwidth,
        xmode = log,
        ymode = log,
        xlabel={ndof},
        cycle multi list={\nextlist MyColors},
        scale = {1},
        ymin = {1e-7},
        clip = true,
        legend cell align=left,
        legend style={legend columns=1,legend pos=outer north east,font=\fontsize{7}{5}\selectfont}
        ]
	\addplot table [x=ndof_sum,y=J_eps] {Experiment3.txt};
	\addplot table [x=ndof_sum,y=eta_h] {Experiment3.txt};
	\addplot table [x=ndof_sum,y=eta_Kac] {Experiment3.txt};
	\addplot table [x=ndof_sum,y=eta_+] {Experiment3.txt};
	\addplot table [x=ndof_sum,y=eta_-] {Experiment3.txt};
	\addplot table [x=ndof_sum,y=eps_+] {Experiment3.txt};
	\addplot table [x=ndof_sum,y=eps_-] {Experiment3.txt};
	\addplot[sharp plot,update limits=false] coordinates {(1e0,1e2) (1e9,1e-7)};
	\legend{
          {$\mathcal{J}_\epsilon(u_n)-\mathcal{J}(u)$},
          {$\eta_h^2(u_h)$},
          {$\eta_\textup{Ka\v{c}}^2(u_h)$},
          {$\eta^2_{\varepsilon^+}$},
          {$\eta^2_{\varepsilon^-}$},
          {$\varepsilon_+$},
          {$\varepsilon_-$},
          {$10\,$costs$^{-1}$}};
      \end{axis}
    \end{tikzpicture}
  \end{center}
  \caption{Constant Force in Slit Domain}
  \label{fig:f2}
\end{figure}
% \begin{figure}[ht]
%   \centering
%   \includegraphics[width=0.9\textwidth]{bump-eps-converted-to-cropped.pdf}
%   \caption{The Bump}
%   \label{fig:bump}
% \end{figure}
% \begin{figure}[ht]
%   \centering
%   \includegraphics[width=0.9\textwidth]{needle_s00625-eps-converted-to-cropped.pdf}
% \end{figure}
% \begin{figure}[ht]
%   \centering
%   \includegraphics[width=0.9\textwidth]{f2-eps-converted-to-cropped.pdf}
% \end{figure}

Let us explain the numerical results in more detail.
\begin{itemize}
\item \textbf{Bump:} The algorithm converges optimally with respect to
  the accumulated costs. The exact solution has a bounded gradient, so
  the upper truncation bound~$\epsilon_{n,+}$ does not increase
  much. There is only one critical point at~$x=0$, where
  $\nabla u(0)=0$. Thus, the algorithm decreases the lower truncation
  bound~$\epsilon_{n,-}$ quite moderately, since the error from
  truncation appears only in a small neighborhood around zero. All
  estimators decrease nicely with the optimal rate (1/cost). The
  number of fixpoint iterations remains bounded, so the cost is
  proportional to the current number of degrees of freedom.
\item \textbf{The Needle:} The algorithm converges optimally with respect to
  the accumulated costs. The exact solution has unbounded gradient at
  the isolated point zero. Therefore, the upper truncation
  bound~$\epsilon_{n,+}$ is increased by the algorithm. This upper
  truncation starts
  quite late, since it is only necessary in set of small measure. The
  number of fixpoint iterations remain bounded.
\item \textbf{Constant Force in Slit Domain:} The gradient of the
  exact solution is bounded, so the upper truncation
  bound~$\epsilon_{n,+}$ does not increase much. The energy error
  still decreases very fast. It decreases almost optimally with
  respect to the accumulated cost, but the slope seems slightly worse.
  It is however still much faster, than our worst-case theory
  predicts. 
\end{itemize}
Overall, we see that our algorithm converges with a
rate, which is optimal in many cases.

\appendix
\section{Auxiliaries}
\label{sec:auxiliaries}

The following embedding is probably well known. However, since we
could not find a reference for it and we need it for the proof of
Theorem~\ref{thm:regE}, we include a short proof of it.\footnote{We thank
  W.~Sickel for explaining the details to us.}  In the
following~$\mathcal{N}^{\frac 12,2}$ denotes the usual \Nikolskii{}
space, see e.g.~\cite{KOF77}.
\begin{lemma}\label{lem:embedding}
  $\calN^{\frac 12,2}(\Omega)\hookrightarrow
  L^{\frac{2d}{d-1},\infty}(\Omega)$.
\end{lemma}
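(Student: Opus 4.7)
The plan is to reduce matters to a sharp Besov--Lorentz embedding on $\setR^d$. First I would identify the Nikolskii space with a Besov space: for $0 < s < 1$ one has the standard identification $\calN^{s,p}(\Omega) = B^{s}_{p,\infty}(\Omega)$ (Nikolskii spaces are Besov spaces with fine index $\infty$), so the target becomes $B^{1/2}_{2,\infty}(\Omega) \hookrightarrow L^{2d/(d-1),\infty}(\Omega)$. For a Lipschitz domain (which is the case in the applications here) a standard extension operator reduces the problem to the analogous embedding on all of $\setR^d$, where the full machinery of Fourier analysis and translation-invariant interpolation is available.

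Next I would invoke real interpolation. The $K$-functional characterization of the Besov scale yields
\[
B^{1/2}_{2,\infty}(\setR^d) = \bigl(L^2(\setR^d), W^{1,2}(\setR^d)\bigr)_{1/2,\infty}.
\]
For $d\ge 3$ the classical Sobolev embedding $W^{1,2}(\setR^d) \hookrightarrow L^{2^*}(\setR^d)$ with $2^* = 2d/(d-2)$, combined with the exactness of real interpolation, gives
\[
B^{1/2}_{2,\infty}(\setR^d) \hookrightarrow \bigl(L^2(\setR^d), L^{2^*}(\setR^d)\bigr)_{1/2,\infty}.
\]

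Finally I would apply the standard identification of real interpolation of Lebesgue spaces as Lorentz spaces, namely $(L^{p_0}, L^{p_1})_{\theta,q} = L^{p,q}$ with $1/p = (1-\theta)/p_0 + \theta/p_1$. Plugging in $p_0 = 2$, $p_1 = 2^*$, $\theta = 1/2$, $q=\infty$ a direct computation gives
\[
\tfrac{1}{p} = \tfrac{1}{4} + \tfrac{d-2}{4d} = \tfrac{d-1}{2d},
\]
i.e.\ $p = 2d/(d-1)$, which delivers the claim. In the borderline dimension $d = 2$ the endpoint Sobolev embedding fails, so I would instead use $W^{1,2}(\setR^2) \hookrightarrow L^q(\setR^2)$ for arbitrary finite $q$, run the interpolation for each such $q$, and pass to the limit $q \to \infty$ to recover the target $L^{4,\infty}$.

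The arithmetic of exponents is routine; the only technical nuisance is keeping the chain of identifications (Nikolskii $\leftrightarrow$ Besov $\leftrightarrow$ real interpolation of a Sobolev pair) consistent on a domain, which is why reducing to $\setR^d$ via extension is the cleanest path. An alternative, essentially equivalent route is to quote directly the sharp Besov--Lorentz embedding $B^s_{p,q}(\setR^d)\hookrightarrow L^{p^*,q}(\setR^d)$ with $1/p^* = 1/p - s/d$, which after the Nikolskii--Besov identification yields the stated inclusion immediately.
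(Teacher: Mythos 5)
Your argument runs along the same skeleton as the paper's: identify $\calN^{\frac 12,2}$ with $B^{1/2}_{2,\infty}$, realize the latter as a real interpolation space of a pair whose endpoints embed into Lebesgue spaces, and then use the identification of $(L^{p_0},L^{p_1})_{\theta,\infty}$ with a weak Lorentz space. The difference is the choice of pair: you use the endpoint couple $(L^2, W^{1,2})$ together with the Sobolev embedding $W^{1,2}\hookrightarrow L^{2^*}$, whereas the paper deliberately stays inside the Besov scale and interpolates between $B^{1/4}_{2,1}$ and $B^{3/4}_{2,1}$, embedding each into $L^{4d/(2d-1)}$ and $L^{4d/(2d-3)}$ respectively. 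The benefit of the interior pair is precisely that it avoids the endpoint Sobolev embedding altogether, so the argument is uniform in $d\geq 2$ (for $d=2$ the exponents $4d/(2d-1)=8/3$ and $4d/(2d-3)=8$ are both finite and the embeddings are unproblematic).

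This is exactly where your proposal has a gap. For $d=2$ the embedding $W^{1,2}(\setR^2)\hookrightarrow L^{2^*}$ does not exist, and the workaround you sketch---interpolate $(L^2,L^q)_{1/2,\infty}$ for each finite $q$ and "pass to the limit $q\to\infty$"---does not directly give the target $L^{4,\infty}$. For each fixed $q$ you obtain $B^{1/2}_{2,\infty}\hookrightarrow L^{4q/(q+2),\infty}$, but the implicit constant depends on $q$ through the Sobolev constant $\|W^{1,2}\hookrightarrow L^q\|\sim \sqrt q$, and it is not automatic that the resulting bounds pass to the limit and yield a finite $L^{4,\infty}$ bound. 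One would need to either track these constants carefully, replace $L^q$ by a $q$-independent endpoint such as $\mathrm{BMO}$ and interpolate $(L^2,\mathrm{BMO})_{1/2,\infty}$, or, most simply, do what the paper does and stay with an interior Besov pair. For $d\ge 3$ your argument is fine, and your alternative one-line route via the sharp embedding $B^s_{p,q}\hookrightarrow L^{p^*,q}$ also works in all dimensions, but the specific $(L^2,W^{1,2})$ route as written is incomplete at $d=2$, which is a case of genuine interest for the $p$-Poisson problem.
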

\begin{proof}
  We will not recapitulate the definitions of the occurring
  spaces. First of all, we use the identity
  \begin{align*}
    \mathcal{N}^{\frac 12,2}(\Omega) = B_{2,\infty}^{\frac 12}(\Omega)
  \end{align*}
  as stated in \cite[Remark 8.4.5]{KOF77}, where $B_{p,q}^s(\Omega)$
  denotes the standard Besov spaces. In \cite[Theorems 1 and 2 in
  4.3.1]{T78} we find the interpolation pair $\{B^{\frac{1}{4}}_{2,1}
  (\Omega), B^{\frac{3}{4}}_{2,1} (\Omega)\}$ such that
  \begin{align*}
    B_{2,\infty}^{\frac 12}(\Omega) = (B^{\frac{1}{4}}_{2,1} (\Omega),
    B^{\frac{3}{4}}_{2,1} (\Omega))_{\frac{1}{2}, \infty} 
  \end{align*}
  holds. The embeddings (see \cite{EEK06} respectively \cite{Pee66})
  \begin{align*}
    B^{\frac{1}{4}}_{2,1} (\Omega)\hookrightarrow
    L^{\frac{4d}{2d-1}}(\Omega) \quad\text{and}\quad
    B^{\frac{3}{4}}_{2,1} (\Omega)\hookrightarrow
    L^{\frac{4d}{2d-3}}(\Omega)  
  \end{align*}
  yield
  \begin{align*}
    (B^{\frac{1}{4}}_{2,1} (\Omega), B^{\frac{3}{4}}_{2,1}
    (\Omega))_{\frac{1}{2}, \infty}\hookrightarrow
    (L^{\frac{4d}{2d-1}}(\Omega),
    L^{\frac{4d}{2d-3}}(\Omega))_{\frac{1}{2}, \infty} . 
  \end{align*}
  Finally, by \cite[Theorem 2 in 1.18.6]{T78} we get
  \begin{align*}
    (L^{\frac{4d}{2d-1}}(\Omega),
    L^{\frac{4d}{2d-3}}(\Omega))_{\frac{1}{2}, \infty} &=
                                                         L^{\frac{2d}{d-1},\infty}(\Omega)
                                                         . 
  \end{align*}
  This proves the claim. \qed
\end{proof}

Moreover, in the proof Lemma~\ref{lem:decay-G} we used the
following algebraic estimate:
\begin{lemma}
  \label{lem:algebraicidff}%
  Let $\gamma>0$. Then for all $n\ge 1$ we have
  \begin{align*}
    n^{-\gamma}  -(n+1)^{-\gamma} \ge n^{-\gamma-1}\min\{\tfrac\gamma 2,1-2^{-\gamma}\} .
  \end{align*}
\end{lemma}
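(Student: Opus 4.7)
The plan is to rewrite the claim in a dimensionless form and then split according to the size of $\gamma$. Setting $q_n := n/(n+1)$ and pulling out $n^{-\gamma-1}$ one sees
\begin{align*}
  n^{-\gamma} - (n+1)^{-\gamma} = n^{-\gamma-1} \cdot n\bigl(1 - q_n^{\gamma}\bigr),
\end{align*}
so the inequality is equivalent to $n(1-q_n^\gamma) \ge \min\{\gamma/2,\,1-2^{-\gamma}\}$ for every $n \ge 1$. Since the min is dominated by each of its two arguments separately, it suffices to produce two separate lower bounds: $n(1-q_n^\gamma) \ge \gamma/2$ in the regime where $\gamma/2$ is the smaller entry, and $n(1-q_n^\gamma) \ge 1-2^{-\gamma}$ in the regime where $1-2^{-\gamma}$ is smaller. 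A routine comparison of the two functions shows that these regimes are $\gamma\le 1$ and $\gamma\ge 1$ respectively, so I would structure the proof around that split.

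In the case $\gamma\le 1$ I would invoke Bernoulli's inequality in its concave form $(1-x)^\gamma \le 1-\gamma x$ with $x = 1/(n+1)$, which yields $1-q_n^\gamma \ge \gamma/(n+1)$. Combined with $n/(n+1)\ge 1/2$ for $n\ge 1$, this gives $n(1-q_n^\gamma) \ge \gamma/2$, as desired. This is the easy case and requires no further ideas.

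The case $\gamma\ge 1$ will be the main point. The idea is to consider the auxiliary function $F(x) := (1-(1-x)^\gamma)/x$ on $(0,1]$ and prove that $F$ is non-increasing. A computation gives $x^2 F'(x) = \gamma x(1-x)^{\gamma-1} - (1-(1-x)^\gamma)$, and the mean value theorem applied to $y\mapsto y^\gamma$ on $[1-x,1]$ shows $1-(1-x)^\gamma = \gamma\xi^{\gamma-1}x$ for some $\xi\in(1-x,1)$; since $\gamma\ge 1$ implies $\xi^{\gamma-1}\ge (1-x)^{\gamma-1}$, monotonicity $F'\le 0$ follows. Writing $1-q_n^\gamma = \tfrac{1}{n+1}F(1/(n+1))$ and using $1/(n+1)\le 1/2$ for $n\ge 1$, one gets
\begin{align*}
  n(1-q_n^\gamma) = \frac{n}{n+1}\,F\bigl(\tfrac{1}{n+1}\bigr) \ge \frac{n}{n+1}\cdot F(\tfrac 12) = \frac{2n}{n+1}\bigl(1-2^{-\gamma}\bigr) \ge 1-2^{-\gamma},
\end{align*}
since $2n/(n+1)\ge 1$. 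Combining the two cases yields the claim. The only delicate step is the monotonicity of $F$; everything else is essentially bookkeeping with Bernoulli-type estimates.
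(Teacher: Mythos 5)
Your proof is correct and takes essentially the same route as the paper: split at $\gamma=1$, use a Bernoulli/convexity estimate for $\gamma\le 1$, and a concavity-based estimate for $\gamma\ge 1$, with the same $n/(n+1)\ge 1/2$ bookkeeping at the end. The paper phrases the $\gamma\ge 1$ case directly as a chord bound for the concave function $h(t)=1-(1-t)^\gamma$ on $[0,\tfrac12]$, whereas you establish the equivalent monotonicity of $h(t)/t$ via the mean value theorem, but the underlying idea is identical.
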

\begin{proof}
  We define $h:[0,\tfrac 12]\to\setR$ via $h(t):=1-(1-t)^\gamma$. Note
  that $h'(t) = \gamma(1-t)^{\gamma-1}$ and
  $h''(t) = \gamma(1-\gamma)(1-t)^{\gamma-2}$. For $\gamma\ge 1$ this
  implies that $h$ is concave, so
  \begin{align*}
    h(t)&\ge t\Big(\tfrac{h(\frac 12)-h(0)}{\frac 12}\Big)= 2 (1-2^{-\gamma})t .
  \end{align*}
  On the other hand, if $\gamma\in(0,1)$, the function $h$ is convex. Therefore,
  \begin{align*}
    h(t)&\ge h(0) + th'(0) = \gamma t .
  \end{align*}
  This implies $h(t)\ge \min\{\gamma, 2(1-2^{-\gamma})\}t$. Therefore, we get
  \begin{align*}
    n^{-\gamma} - (n+1)^{-\gamma}
    &=
      n^{-\gamma}(1-(1-\tfrac{1}{n+1})^\gamma)
      = n^{-\gamma}h(\tfrac{1}{n+1}) 
    \\
    &\ge n^{-\gamma}\min\{\gamma, 2(1-2^{-\gamma})\}\tfrac{1}{n+1}\ge
      n^{-\gamma-1}\min\{\tfrac\gamma 2,1-2^{-\gamma}\} . 
  \end{align*}
  This proves the claim. \qed
\end{proof}

\section{On Uniformly Convex Orlicz Functions}
\label{sec:orlicz-functions}

In this appendix we introduce the concept of \emph{uniformly convex
  Orlicz functions} and their shifted versions. The results presented
below are modifications of \cite[Lemma~3]{DE08},
\cite[Lemma~16]{DieK08}, and~\cite[Corollary~26]{DieK08}. However,
since we use here a slightly different notion of shifted functions and
less regularity for our Orlicz functions, we decided to include a
proof and keep our paper self-contained. Throughout this section we
assume that our Orlicz function satisfies the following assumptions.
\begin{definition}
  \label{def:general-phi}
  Let $\phi$ be an N-function\footnote{See the beginning of
    Section~\ref{sec:epslimit} for the definition of an N-function.}.
  We say that $\phi$ is \emph{uniformly convex} if there
  exist~$c_4,c_5 >0$ with
  \begin{align}
    \label{eq:mon}
    c_4 \frac{\phi'(s)}{s} 
    &\leq \frac{\phi'(s) - \phi'(t)}{s-t} \leq 
      c_5 \frac{\phi'(s)}{s} \qquad \text{for all $s>t \geq 0$}.
  \end{align}
\end{definition}
The choice~$t=0$ implies that $c_4 \leq 1 \leq c_5$. Moreover,~\eqref{eq:mon} implies that~$\phi'$ is strict increasing.

For any N-function~$\rho$ the complementary (or dual) N-function
$\rho^*$ is given by $\rho^*(u) = \sup_{t \geq 0} (ut -
\phi(t))$. This is equivalent to $(\rho^*)' = (\rho')^{-1}$. Note
that~$(\rho^*)^*=\rho$. 
\begin{lemma}
  \label{lem:mon-is-dual}
  If $\phi$ is a uniformly convex N-function with constants~$c_4$
  and~$c_5$, then $\phi^*$ is a uniformly convex N-function with
  constants $1/c_5$ and $1/c_4$.
\end{lemma}
\begin{proof}
  Let $s=(\phi^*)'(u)$ and $t=(\phi^*)'(v)$. Then by strict
  monotonicity of~$\phi'$ and $(\phi^*)'$ condition~\eqref{eq:mon} is
  equivalent to
  \begin{align*}
    c_4 \frac{u}{(\phi^*)'(u)}
    &\leq \frac{u-v}{(\phi^*)'(u) - (\phi^*)'(v)} \leq 
      c_5 \frac{u}{(\phi^*)'(u)} \qquad \text{for all $u>v \geq 0$.}
  \end{align*}
  Taking the reciprocal proves the uniform convexity of~$\phi^*$. The
  reverse conclusion follows by duality, i.e.~$\phi =(\phi^*)^*$. \qed
\end{proof}

\begin{lemma}
  Let $\phi$ be an N-function, which is piecwise~$C^2$ on
  $(0,\infty)$. Then~$\phi$ is uniformly convex if and only if
  \begin{align}
    \label{eq:old-mon}
    c_6 \leq \frac{\phi''(t)\,t}{\phi'(t)} 
    &\leq c_7 \qquad 
      \text{for
      all $t>0$} 
  \end{align}
  for some $c_6,c_7>0$.  This is in fact the uniform convexity
  condition in~\cite{DE08}.
\end{lemma}
\begin{proof}
  Note that $s \searrow t$ in~\eqref{eq:mon}
  implies~\eqref{eq:old-mon} with $c_6=c_4$ and $c_7=c_5$. So assume
  now, that~\eqref{eq:old-mon} holds.

  If $s \geq 2t$, then $s-t \geq s/2$ and the upper bound
  of~\eqref{eq:mon} is obvious with $c_7=2$.
  So let us assume $t < s \leq 2t$. Then
  \begin{align*}
    \frac{\phi'(s)-\phi'(t)}{s-t}
    &= \frac{1}{s-t}\int_t^s \phi''(\tau)\,d\tau
      \leq \frac{c_7}{s-t} \int_t^s \frac{\phi'(\tau)}{\tau}\,d\tau
      \leq \frac{c_7 \phi'(s)}{s-t} \log(s/t).
  \end{align*}
  Now $\log(1+a) \leq a$ for~$a\geq 0$ and $s \leq 2t$ imply
  \begin{align*}
    \frac{\phi'(s)-\phi'(t)}{s-t}
    &\leq \frac{c_7 \phi'(s)}{s-t} \frac{s-t}{t} = \frac{c_7
      \phi'(s)}{t} \leq 2c_7 \frac{\phi'(s)}{s}. 
  \end{align*}
  This proves the upper bound of~\eqref{eq:mon}. The lower bound
  follows by duality: Indeed, it follows from
  \begin{align*}
    \frac{(\phi^*)''(u)u}{(\phi^*)'(u)}
    &=
      \frac{\phi'(s)}{\phi''(s)\,s}
      \qquad \text{with $\phi'(s)=u$}
  \end{align*}
  that $\phi^*$ satisfies~\ref{eq:old-mon} with constants~$1/c_7$ and
  $1/c_6$. Thus, by the already proven $\phi^*$ satisfies the upper
  estimate of~\eqref{eq:mon}. By duality, see
  Lemma~\ref{lem:mon-is-dual}.  we get the lower estimate of~$\phi$
  in~\eqref{eq:mon}. \qed
\end{proof}
\begin{lemma}
  \label{lem:uniform-convex-delta2}
  Let $\phi$ be a uniformly convex N-function.  Then the functions~$\phi$
  and~$\phi^*$ satisfy the~$\Delta_2$-condition, with constants only
  depending on~$c_4$ and $c_5$.
\end{lemma}
\begin{proof}
  Using $s=\lambda t$ with~$\lambda >1$ in~\eqref{eq:mon} we obtain
  \begin{align}
    \label{eq:phip-delta2}
    \Big( 1 - c_5 \frac{\lambda-1}{\lambda}\Big) \phi'(\lambda t) \leq \phi'(t).
  \end{align}
  Now, we can choose~$\lambda_0 > 1$ such that
  $\mu := (1-c_4 \frac{\lambda_0-1}{\lambda_0}) > \frac 12$. So, we
  have $\phi'(\lambda_0 t) \leq 2 \phi'(t)$. From this it
  follows by iteration (also using the monotonicity of~$\phi')$ that
  $\phi'(2t) \leq \tilde{c}\, \phi'(t)$, where~$\tilde{c}$ only depends
  on~$\lambda_0$ and~$\mu$ and therefore only on~$c_4$.  Thus, it
  follows that
  \begin{align*}
    \phi(2t)-\phi(t) 
    &= 2\,\int_{t/2}^{t} \phi'(2s)\,ds
      \leq 2\,\tilde{c} \int_{t/2}^{t} \phi'(s)\,ds 
      = 2\,\tilde{c}\,
      \big(\phi(t) - \phi(t/2) \big) \leq 2 \tilde{c}\, \phi(t).
  \end{align*}
  Hence, $\phi(2t) \leq (1+2\tilde{c}) \phi(t)$, which proves
  the claim for~$\phi$. The claim for~$\phi^*$ follows by duality with
  Lemma~\ref{lem:mon-is-dual}. \qed
\end{proof}
\begin{remark}[Young's inequality]
  \label{rem:young}
  Let $\phi$ be an N-function such that $\phi$ and $\phi^*$
  satisfy the $\Delta_2$-condition. Then for every $s,t \geq 0$
  we have by \emph{Young's inequality}
  \begin{align*}
    st &\leq \phi(s) +\phi^*(t).
  \end{align*}
  This and $\phi(t) \leq \phi'(t)\,t \leq \phi(2t)$ implies that for
  all~$s,t\geq 0$
  \begin{align*}
    \phi'(s)\,t &\leq \delta \phi(s) + c_\delta \phi(t),
    \\
    \phi'(s)\,t &\leq c_\delta \phi(s) + \delta \phi(t),
  \end{align*}
  where $c_\delta$ depends only on~$\delta$ and the $\Delta_2$-constants.
\end{remark}

For each~$a\geq 0$ we define the \emph{shifted} N-function $\phi_a$ by
its derivative
\begin{align}
  \label{eq:shifted-phi}
  \phi_a'(t) &:= \frac{\phi'(t \vee a)}{t \vee a} t
\end{align}
and $\phi_a(t) = \int_0^t \phi_a'(\tau)\,d\tau$.  In the notation of
Section~\ref{sec:epslimit} this is just $\phi_\epsilon$ with
$\epsilon=(a,\infty)$, see also Remark~\ref{rem:no-upper-bound}.
\begin{lemma}
  \label{lem:shift_conj_exact}
  There holds $(\phi_a)^* = (\phi^*)_{\phi'(a)}$.
\end{lemma}
\begin{proof}
  Note that
  $((\phi^*)_{\phi'(a)})'(u) = \frac{(\phi^*)'(u \vee \phi'(a))}{u \vee
    \phi'(a)} u$ is the inverse of $\phi_a'(t)$. Thus
  $(\phi^*)_{\phi'(a)}$ and $(\phi_a)^*$ are conjugate to each other. \qed
\end{proof}

\begin{remark}
  The shifted N-functions have already been originally introduced
  in~\cite{DE08} with the modified definition
  $\phi_a'(t) = \frac{\phi'(t + a)}{t+a}\,t$. This original version
  shares almost all of the properties with the version of this
  paper. However, our exact formula $(\phi_a)^* = (\phi^*)_{\phi'(a)}$
  of Lemma~\ref{lem:shift_conj_exact} is replaced in~\cite{DE08} by
  equivalence. This is one of the advantages of our new definition.
\end{remark}

Given our uniformly convex N-function~$\phi$ we define the stress~$A$
and the auxiliary function~$V$ as in Definition~\ref{def:AandV} by
\begin{definition}
  \label{def:A-and-V}
  For $P\in\setR^d$ we define
  \begin{align*}
    A(P):= 
    \begin{cases}
      \frac{\phi'(\vert P\vert)}{\vert P\vert}P &\text{if } P\neq 0
      \\
      0&\text{if }P=0\end{cases}\quad\text{and}\quad
    V(P):=\begin{cases} \sqrt{\frac{\phi'(\vert P\vert)}{\vert
          P\vert}}P&\text{if }P\ne 0\\0&\text{if }P=0.
    \end{cases}
  \end{align*}
\end{definition}

\begin{lemma}
  \label{lem:psi}
  Let $\phi$ be a uniformly convex N-function with constants~$c_4$
  and~$c_5$. Define $\psi$ via its derivative by
  $\frac{\psi'(s)}{s} := \sqrt{\frac{\phi'(s)}{s}}$ and let
  $\psi(t) := \int_0^t \psi'(s)\,ds$. Then $\psi$ is a uniformly
  convex N-function with constants $\frac 12$ and $1+c_5$. Moreover,
  $V(P) = \frac{\psi'(\abs{P})}{\abs{P}} P$.
\end{lemma}
\begin{proof}
  For $s>t$ we calculate
  \begin{align*}
    I &:= \frac{\psi'(s)- \psi'(t)}{s-t} 
        = 
        \frac{\sqrt{\phi'(s)s}- \sqrt{\phi'(t) t}}{s-t}
        =
        \frac{1}{\sqrt{\phi'(s)s}+ \sqrt{\phi'(t) t}}
        \frac{\phi'(s) s- \phi'(t) t}{s-t}
    \\
      &=
        \frac{\phi'(s)}{\sqrt{\phi'(s)s}+ \sqrt{\phi'(t) t}} +
        \frac{\phi'(s) - \phi'(t)}{s-t}   \frac{t}{\sqrt{\phi'(s)s}+
        \sqrt{\phi'(t)  t}}
      =: II + III.
  \end{align*}
  Now, with~\eqref{eq:mon}
  \begin{align*}
    \frac 12 \frac{\psi'(s)}{s} 
    &= \frac{\sqrt{\phi'(s)}}{2 \sqrt{s}}
      \leq II \leq  \frac{\sqrt{\phi'(s)}}{\sqrt{s}} =
      \frac{\psi'(s)}{s}
  \end{align*}
  and
  \begin{align*}
    0 &\leq III \leq c_5 \frac{\phi'(s)}{s} \frac{t}{\sqrt{\phi'(s)s}+
        \sqrt{\phi'(t)  t}}
        \leq c_5 \frac{\sqrt{\phi'(s)}}{\sqrt{s}} = c_5 \frac{\psi'(s)}{s}. 
  \end{align*}
  Overall, we get $\frac 12 \frac{\psi'(s)}{s} \leq I \leq (1+c_5)
  \frac{\psi'(s)}{s}$, which proves the claim. \qed
\end{proof}

\begin{lemma}
  \label{lem:shifted-uniform}
  Let $\phi$ be a uniformly convex N-function. Then $\phi_a$
  is also uniformly convex with constants~$c_4$ and $c_5$ replaced by
  $c_4$ and $c_5+1$. 
\end{lemma}
\begin{proof}
  For $s>t$ define
  \begin{align*}
    I &:= \frac{\phi_a(s)-\phi_a(t)}{s-t} \qquad \text{and} \qquad II
        := \frac{\phi_a'(s)}{s}.
  \end{align*}
  If $s,t \geq a$, then $\phi_a'(s)=\phi'(s)$ and
  $\phi_a'(t)=\phi'(t)$. Hence $c_4 II \leq I \leq c_5 II$ by
  assumptions on~$\phi$.

  If $s,t \leq a$, then $\phi_a(s) = \frac{\phi'(a)}{a}
  s$ and $\phi_a(t) = \frac{\phi'(a)}{a}
  t$, so $I= \frac{\phi'(a)}{a} =II$. Since $c_4 \leq 1 \leq c_5$ the claim follows in
  this case.

  If remains to consider $s > a > t$. Then
  $\phi_a'(s)=\phi'(s)$ and $\phi_a'(t) =
  \frac{\phi'(a)}{a} s$, so
  \begin{align*}
    I &= \frac{\phi'(s) - \frac{\phi'(a)}{a} t}{s-t} =
        \frac{\phi'(s) - \phi'(a)}{s-t} + 
        \frac{\phi'(a)}{a}  \frac{a-t}{s-t} =:
        I_1 + I_2.
  \end{align*}
  Thus,
  \begin{align*}
    I &\geq         \frac{\phi'(s) - \phi'(a)}{s-a} + 0
        \geq c_4 \frac{\phi'(s)}{s}.
  \end{align*}
  On the hand using that $a \mapsto
  \frac{a-t}{a}$ is increasing in~$a$ we get
  \begin{align*}
    I &\leq         \frac{\phi'(s) - \phi'(t)}{s-a} +
        \frac{\phi'(a)}{s}  \frac{s-t}{s-t} 
        \leq c_5 \frac{\phi'(s)}{s} + \frac{\phi'(s)}{s}.
  \end{align*}
  This proves the claim. \qed
\end{proof}
\begin{remark}
  If follows from Lemma~\ref{lem:shifted-uniform} and
  \ref{lem:uniform-convex-delta2} that the
  families~$\set{\phi_a}_{a \geq 0}$ and $\set{(\phi_a)^*}_{a \geq 0}$
  satisfy the~$\Delta_2$-condition with constants independent of~$a$.
\end{remark}

\begin{lemma}
  \label{lem:mon}
  Let $\phi$ be a uniformly convex N-function with constants~$c_4$
  and~$c_5$. Then for all~$P,Q$ we have
  \begin{align*}
    (c_4 \wedge \tfrac 12)  \frac{\phi'(\abs{P} \vee
    \abs{Q})}{\abs{P} \vee \abs{Q}} \abs{P-Q}^2
    &\leq (A(P)- A(Q)) : (P-Q) 
    \\
    &\leq (c_5 \vee 2) \frac{\phi'(\abs{P} \vee
      \abs{Q})}{\abs{P} \vee \abs{Q}} \abs{P-Q}^2,
    \\
    \abs{A(P) - A(Q)} &\leq (c_5 + 1) \frac{\phi'(\abs{P} \vee
                       \abs{Q})}{\abs{P} \vee \abs{Q}} \abs{P-Q}.
  \end{align*}
\end{lemma}
\begin{proof}
  We define $\hat{Q} := \frac{P}{\abs{P}}$,
  $\hat{P} := \frac{P}{\abs{P}}$, $\theta := \hat{P} : \hat{Q}$,
  \begin{align*}
    f(P,Q) &:= (A(P)- A(Q)) : (P-Q)
     \quad \text{and} \quad
    g(P,Q) 
           := \frac{\phi'(\abs{P} \vee \abs{Q})}{\abs{P}\vee \abs{Q}}
             \abs{P-Q}^2. 
  \end{align*}
  Then
  \begin{align*}
    f(P,Q) 
    &= \big( \phi'(\abs{P}) \hat{P} - \phi'(\abs{P}) \hat{Q}\big)
      :(\abs{P} \hat{P} - \abs{Q} \hat{Q})
    \\
    &=  \phi'(\abs{P})\abs{P} + \phi'(\abs{Q}) \abs{Q} -\big(
      \phi'(\abs{P})\abs{Q} + \phi'(\abs{Q}) \abs{P}\big) \theta
    =: f(\abs{P},\abs{Q}, \theta)
  \end{align*}
  and
  \begin{align*}
    g(P,Q) 
    &= \frac{\phi'(\abs{P} \vee \abs{Q})}{\abs{P}\vee \abs{Q}} \big(
      \abs{P}^2 + \abs{Q}^2 - 2 \theta \abs{P} \abs{Q}\big)
    =: g(\abs{P},\abs{Q}, \theta).
  \end{align*}
  We need to estimate
  $f(\abs{P},\abs{Q},\theta)/g(\abs{P},\abs{Q},\theta)$. Both~$f$
  and~$g$ are non-negative and linear in~$\theta$. Hence~$\frac{f}{g}$
  is monotone in~$\theta$. In particular, it suffices to control the
  cases~$\theta=1$ and $\theta=-1$.
  We begin with the simple case~$\theta=-1$.
  \begin{align*}
    \frac{f(\abs{P},\abs{Q},-1)}{g(\abs{P},\abs{Q},-1)} 
    &=
      \frac{(\phi'(\abs{P}) +
      \phi'(\abs{Q}))(\abs{P}+\abs{Q})}{ (\phi'(\abs{P}) \vee
      \phi'(\abs{Q}))(\abs{P} +\abs{Q})}.
  \end{align*}
  Since $a \vee b \leq a+b \leq 2 (a \vee b)$, this implies
  \begin{align*}
    \frac 12 &\leq \frac{f(\abs{P},\abs{Q},-1)}{g(\abs{P},\abs{Q},-1)}
               \leq 2.
  \end{align*}
  Now consider the case~$\theta=1$. Without loss of generality we can
  assume $\abs{P} \geq \abs{Q}$.
  \begin{align*}
    \frac{f(\abs{P},\abs{Q},1)}{g(\abs{P},\abs{Q},1)} 
    &=
      \frac{(\phi'(\abs{P}) -
      \phi'(\abs{Q}))(\abs{P}-\abs{Q}) (\abs{P} \vee \abs{Q})}{
      (\phi'(\abs{P}\vee \phi'(\abs{Q})))\,(\abs{P} 
      -\abs{Q})^2}= 
      \frac{(\phi'(\abs{P}) -
      \phi'(\abs{Q}))\abs{P}}{ \phi'(\abs{P})\,(\abs{P}
      -\abs{Q})}. 
  \end{align*}
  Now,~\eqref{eq:mon} implies
  \begin{align*}
    c_4 &\leq \frac{g(\abs{P},\abs{Q},1)}{f(\abs{P},\abs{Q},1)} \leq c_5.
  \end{align*}
  Combining the two cases proves the first claim. To prove the second
  one we assume again~$\abs{P} \geq \abs{Q}$ and estimate
  \begin{align*}
    \abs{A(P) - A(Q)}
    &= \bigabs{\phi'(\abs{P}) \hat{P} - \phi'(\abs{Q}) \hat{Q}}
      \leq \bigabs{\phi'(\abs{P}) - \phi'(\abs{Q})} + \phi'(\abs{Q})
      \abs{\hat{P} - \hat{Q}}
    \\
    &\leq (c_5+1) \frac{\phi'(\abs{P} \vee
      \abs{Q})}{\abs{P} \vee \abs{Q}} \abs{P-Q}.
  \end{align*}
  This proves the second claim. \qed
\end{proof}
\begin{lemma}
  \label{lem:phi-shift-equiv}
  Let $\phi$ be a uniformly convex N-function with constants~$c_4$
  and~$c_5$. Then for all~$P,Q \in \setR^d$
  \begin{align*}
    \phi_{\abs{Q}}'(\abs{P - Q})
    &\eqsim
      \frac{\phi'(|P|\vee
      |Q|)}{|P|\vee |Q|}\abs{P-Q},
    \\
    \phi_{\abs{Q}}(\abs{P - Q})
    &\eqsim
      \frac{\phi'(|P|\vee
      |Q|)}{|P|\vee |Q|}\abs{P-Q}^2,
  \end{align*}
  where the constants only depend on~$c_4$ and $c_5$.
\end{lemma}
\begin{proof}
  The estimates follows
  from~$\frac 12 (\abs{Q} + \abs{P-Q}) \leq \abs{P} \vee \abs{Q} \leq
  2 (\abs{Q} + \abs{P-Q})$, the~$\Delta_2$-property of~$\phi$ (see
  Lemma~\ref{lem:uniform-convex-delta2}) and
  $\phi_{\abs{Q}}'(t)\,t \eqsim \phi_{\abs{Q}}(t)$. \qed
\end{proof}

\begin{lemma}
  \label{lem:hammer}
  Let $\phi$ be a uniformly convex N-function with constants~$c_4$
  and~$c_5$ and let $A$ and~$V$ be as in Lemma~\ref{def:A-and-V}. Then
  for all $P,Q\in\setR^d$
  \begin{align*}
    (A(P)-A(Q))\cdot (P-Q)
    &\eqsim \phi_{\abs{Q}}(\abs{P - Q}) \eqsim \abs{V(P) -
      V(Q)}^2,
    \\
    \abs{A(P)-A(Q)}
    &\eqsim \phi_{\abs{Q}}'(\abs{P - Q}).    
  \end{align*}
  where the constants only depend on~$c_4$ and $c_5$.
\end{lemma}
\begin{proof}
  The first equivalence in the first claim and the second claim follow
  immediately from Lemma~\ref{lem:mon} and~\ref{lem:phi-shift-equiv}.
  It remains to prove the second equivalence of the first claim. For
  this we recall~$\psi$ from Lemma~\ref{lem:psi} and observe that~$V$
  is induced by~$\psi$ exactly as~$A$ is induced by~$\phi$, see
  Definition~\ref{def:A-and-V}. In particular, we
  have~$\abs{V(P)-V(Q)} \eqsim \psi_{\abs{Q}}'(\abs{P-Q})$. This proves
  \begin{align*}
    \abs{V(P) -
    V(Q)}^2 &\eqsim \bigabs{\psi_{\abs{Q}}'(\abs{P-Q})}^2
              =\bigg(\frac{\psi'(\abs{Q} \vee \abs{P-Q})}{\abs{Q} \vee
              \abs{P-Q}}
              \abs{P-Q}\bigg)^2
    \\
            &
              = \frac{\phi'(\abs{Q} \vee \abs{P-Q})}{\abs{Q} \vee
              \abs{P-Q}} \abs{P-Q}^2 \eqsim
              \phi_{\abs{Q}}(\abs{P-Q}).
  \end{align*}
  This proves the claim. \qed
\end{proof}
\begin{lemma}
  \label{lem:JAVpre}
  The following estimates hold for arbitrary
  $v\in W_0^{1,\phi}(\Omega)$ and $u$ being the minimizer of
  $\mathcal{J}(w) := \int_\Omega \phi(\abs{\nabla w}) - fw\,dx$:
  \begin{align*}
    \mathcal{J}(v)-\mathcal{J}(u)&\le \int\limits_\Omega (A(\nabla
    v)-A(\nabla u))\cdot\nabla (v-u)\d x
    \\
    &\eqsim \int\limits_\Omega |V(\nabla v)-V(\nabla
    u)|^2\d x
    \\
    &\lesssim \mathcal{J}(v)-\mathcal{J}(u) .
  \end{align*}
\end{lemma}
\begin{proof}
  It follows by convexity and~$\mathcal{J}'(u)=0$ that
  \begin{align*}
    \mathcal{J}(v) - \mathcal{J}(u) &\leq \mathcal{J}'(v)(v-u) =
                                      (\mathcal{J}'(v)-\mathcal{J}'(u))(v-u). 
  \end{align*}
  Since
  $\mathcal{J}'(w)(\xi) = \int_\Omega A(\nabla w) \nabla \xi -
  f\,\xi\,dx$, this proves the first estimate. The equivalence in the
  claim follows then by Lemma~\ref{lem:hammer}. For the last estimate
  we calculate with Taylor, $\mathcal{J}'(u)=0$,
  Lemma~\ref{lem:hammer} and the uniform~$\Delta_2$-condition of the
  family~$\phi_a$ that
  \begin{align*}
    \mathcal{J}(v) - \mathcal{J}(u)
    &= \int_0^1 \big(\mathcal{J}'(u+t(v-u))-\mathcal{J}'(u)\big)
      (v-u)\,dt
    \\
    &= \int_0^1 \int_\Omega \big(A(\nabla (u+t(v-u))) - A(\nabla u)\big) \cdot \nabla
      (v-u) \,dx \, dt
    \\
    &\eqsim \int_\Omega \int_0^1 \phi_{\abs{\nabla u}}(t \abs{\nabla (v-u)}) 
      \frac{dt}{t} \,dx
      \geq \int_\Omega \int_{\frac 12}^1  \phi_{\abs{\nabla
      u}}\big(\tfrac 12 \abs{\nabla (v-u)}\big)  
      dt \,dx
    \\
    &\eqsim \int_\Omega \phi_{\abs{\nabla u}}\big(\abs{\nabla  v - \nabla u}\big)\,dx
      \eqsim \int_\Omega \abs{V(\nabla v)-V(\nabla u)}^2\,dx.
  \end{align*}
  This proves the claim. \qed
\end{proof}
The following lemma is a sharper version of Lemma~25 and Lemma~27
of~\cite{DieK08}.
\begin{lemma}[Shift-change]
  \label{lem:shift-change-new}
  For all~$a,b \geq 0$ and~$t \geq 0$, there holds
  \begin{align}
    \label{eq:shift-change-new1}
    \abs{\phi_a'(t) - \phi_b'(t)} &\lesssim \phi_a'(\abs{a-b}) \eqsim
                                    \phi_b'(\abs{a-b}),
    \\
    \label{eq:shift-change-new2}
    \bigabs{\big((\phi_a)^*\big)'(t) - \big(\phi_b)^*\big)'(t)}
                                  &\lesssim \abs{a-b}. 
  \end{align}
\end{lemma}
\begin{proof}
  We begin with the proof of~\eqref{eq:shift-change-new1}.  The
  equivalence in~\eqref{eq:shift-change-new1} follows from
  Lemma~\ref{lem:phi-shift-equiv}. Thus the claim is symmetric in~$a$
  and $b$ and we can assume that $a \leq b$.  If $a \leq b \leq t$,
  then $\phi_a'(t)=\phi_b'(t)=0$ and~\eqref{eq:shift-change-new1}
  follows. If $t \leq a \leq b$, then with Lemma~\ref{lem:hammer}
  \begin{align*}
    \bigabs{\phi_a'(t)-\phi_b'(t)}
    &= \Bigabs{\frac{\phi'(a)}{a}t - \frac{\phi'(b)}{b} t}
      \leq \abs{\phi'(a)-\phi'(b)} \frac ta + \phi'(b) \Bigabs{\frac
      1a -\frac 1b} t
    \\
    &\lesssim \phi_a'(\abs{a-b}) \frac t a + \phi'(b)
       \frac{\abs{b-a}\,t}{a b} \lesssim \phi_b'(\abs{a-b}).
  \end{align*}
  This proves~~\eqref{eq:shift-change-new1}.  It remains to consider
  the case~$a \leq t \leq b$. In this situation we estimate using the
  previous two cases
  \begin{align*}
    \bigabs{\phi_a'(t)-\phi_b'(t)}
    &\leq
      \bigabs{\phi_a'(t)-\phi_t'(t)} +
      \bigabs{\phi_t'(t)-\phi_b'(t)}
      \lesssim \phi_a'(\abs{a-t}) + \phi_b'(\abs{b-t})
    \\
    &\lesssim \phi_a'(\abs{a-b}) +
      \phi_b'(\abs{a-b}). 
  \end{align*}
  This proves the remaining case of~\eqref{eq:shift-change-new1}. To
  prove~\eqref{eq:shift-change-new2} we estimate with
  Lemma~\ref{lem:hammer} and $(\phi_a^*)' = (\phi_a')^{-1}$
  \begin{align*}
    \bigabs{\big((\phi_a)^*\big)'(t) - \big(\phi_b)^*\big)'(t)}
    &\lesssim \big((\phi_a)^*\big)'(\abs{\phi'(a)-\phi'(b)})
    \\
    &\lesssim \big((\phi_a)^*\big)'\big(\phi_a'(\abs{b-a})\big) =
      \abs{b-a}. 
  \end{align*}
  This proves the claim. \qed
\end{proof}
With Lemma~\ref{lem:shift-change-new} we deduce the following
estimates similar to Corollary~26 and Corollary~28 of~\cite{DieK08}.
\begin{corollary}[Shift-change]
  \label{cor:shift-change-new}
  For all~$P,Q$ and all~$t \geq 0$ there holds
  \begin{align*}
    \phi_{\abs{P}}(t) &\leq (1+ c_\delta) \phi_{\abs{Q}}(t) + \delta
                        \abs{V(P) - V(Q)}^2,
                        \\
    \phi_{\abs{P}}(t) &\leq (1+ \delta) \phi_{\abs{Q}}(t) + c_\delta
                        \abs{V(P) - V(Q)}^2,
    \\
    (\phi_{\abs{P}})^*(t) &\leq (1+ c_\delta) (\phi_{\abs{Q}})^*(t) + \delta
                            \abs{V(P) - V(Q)}^2
    \\
    (\phi_{\abs{P}})^*(t) &\leq (1+ \delta) (\phi_{\abs{Q}})^*(t) + c_\delta
                            \abs{V(P) - V(Q)}^2.
  \end{align*}
\end{corollary}
\begin{proof}
  We estimate with Lemma~\ref{lem:shift-change-new}, Young's
  inequality (see Remark~\ref{rem:young}) with $\phi_{\abs{Q}}$ and Lemma~\ref{lem:hammer}
  \begin{align*}
    \phi_{\abs{P}}(t)
    &= \int_0^t \phi_{\abs{P}}'(s)\,ds
      \leq \int_0^t \phi_{\abs{P}}'(s)  + c\,
      \phi_{\abs{P}}'(\abs{P-Q})\,ds
    \\
    &=\phi_{\abs{P}}(t) +
      \phi_{\abs{P}}'(\abs{P-Q})\,t
    \\
    &=(1+c_\delta) \phi_{\abs{P}}(t) + \delta
      \phi_{\abs{P}}(\abs{P-Q})
    \\
    &\leq (1+c_\delta) \phi_{\abs{P}}(t) + \delta
      \abs{V(P)-V(Q)}^2.
  \end{align*}
  This proves the first inequality. We can exchange $\delta$
  and~$c_\delta$ within the proof to get the second inequality (see
  Remark~\ref{rem:young}).  The other estimates follow analogously
  using $(\phi_{\abs{Q}})^* = (\phi^*)_{\phi'(\abs{Q})}$.  \qed
\end{proof}

{\bf Acknowledgment.} Many thanks to Johannes Storn who did the final
numerical experiments of this article. Finally, we thank the anonymous
reviewer for the careful reading of the manuscript.

\bibliographystyle{spmpsci}
%\bibliography{maxi}

\end{document}